\newcommand{\hf}{\frac{1}{2}}
\newcommand{\inti}{\int_{I_i}}
\newcommand{\intj}{\int_{J_j}}
\DeclareMathOperator{\diag}{diag}
\DeclareMathOperator{\pre}{pre}
\newcommand{\vphi}{\varphi} 
\newcommand{\veps}{\varepsilon}
\newcommand{\ninti}{\overset{\sim}{\int_{I_i}}}
\newcommand{\nintI}{\overset{\sim}{\int_{I}}}
\newcommand{\nintj}{\overset{\sim}{\int_{J_j}}}
\newcommand{\nintJ}{\overset{\sim}{\int_{J}}}
\newcommand{\nintO}{\overset{\sim}{\int_{\Omega}}}
\newcommand{\cI}{\mathcal{I}}
\newcommand{\mI}{\mathcal{I}}
\newcommand{\RR}{\mathbb{R}}
\newcommand{\bpsi}{\boldsymbol{\psi}}
\newcommand{\bvphi}{\boldsymbol{\varphi} }
\newcommand{\brho}{\boldsymbol{\rho}}
\newcommand{\bxi}{\boldsymbol{\xi}}
\newcommand{\bs}{\boldsymbol{s}}
\newcommand{\bu}{\boldsymbol{u}}
\newcommand{\bv}{\boldsymbol{v}}
\newcommand{\bx}{\boldsymbol{x}}
\newcommand{\bz}{\boldsymbol{z}}
\newcommand{\bF}{\boldsymbol{F}}
\newcommand{\bV}{\boldsymbol{V}}
\newcommand{\wFu}{\widehat{Fu}}
\newcommand{\wFbu}{\widehat{F\bu}}
\newcommand{\wFbux}{\widehat{F\bu^x}}
\newcommand{\wFbuy}{\widehat{F\bu^y}}
\newcommand{\wbxi}{\widehat{\bxi}}
\newcommand{\barrho}{\bar{\rho}}
\newcommand{\bzeta}{\boldsymbol{\zeta}}
\newcommand{\bseta}{\boldsymbol{\eta}}
\newtheorem{THM}{Theorem}[section]
\newtheorem{REM}{Remark}[section] \theoremstyle{definition}
\newtheorem{Examp}[subsection]{Example}
\begin{document}

\baselineskip=2pc
\vspace*{.10in}
\begin{center}
  {\bf An entropy stable high-order discontinuous Galerkin method \\ for
  cross-diffusion gradient flow systems}
\end{center}
\centerline{
	Zheng Sun\footnote{Department of Mathematics, The Ohio State University,
  Columbus, OH 43210, USA. E-mail: sun.2516@osu.edu.},
  Jos\'{e} A. Carrillo\footnote{Department of Mathematics, Imperial College London, London
SW7 2AZ, UK. E-mail: carrillo@imperial.ac.uk.} and 
  Chi-Wang Shu\footnote{Division of Applied Mathematics, Brown University,
    Providence, RI 02912, USA. E-mail: shu@dam.brown.edu.}
}

\vspace{.2in}

\centerline{\bf Abstract} \bigskip

As an extension of our previous work in \cite{sun2018discontinuous}, we develop a
discontinuous Galerkin method for solving cross-diffusion systems with a formal
gradient flow structure. These systems are associated with non-increasing
entropy functionals. For a class of problems, the positivity
(non-negativity) of solutions is also expected, which is implied by the physical
model and is crucial to the entropy structure. The
semi-discrete numerical scheme we
propose is entropy stable. Furthermore,
the scheme is also compatible with the positivity-preserving procedure in
\cite{zhang2017positivity} in many scenarios. Hence the resulting fully discrete scheme is able to 
produce non-negative solutions. The method can be applied to both
one-dimensional problems and two-dimensional problems on Cartesian meshes.
Numerical examples are given to examine the performance of the method. 
\vfill

\noindent {\bf Keywords: discontinuous Galerkin method, entropy stability,
  positivity-preserving, cross-diffusion system, gradient flow} 

\clearpage

\section{Introduction}
\setcounter{equation}{0}
\setcounter{figure}{0}
\setcounter{table}{0}

Cross-diffusion systems are widely used to model multi-species interactions in various applications, such as population dynamics in biological systems \cite{shigesada1979spatial}, chemotactic cell migration
\cite{keller1970initiation}, spread of surfactant on the membrane \cite{jensen1992insoluble}, interacting particle systems with volume exclusion \cite{bruna,berendsen}, and pedestrian dynamics \cite{HRSW}. In many situations, the systems are associated with neither symmetric nor positive-definite diffusion matrices, which not only complicate the mathematical analysis but also hinder the development of numerical methods. Recently, progress has been made in analyzing a class of cross-diffusion systems having the structure of a gradient flow associated with a dissipative entropy (or free energy) functional, see \cite{jungel2015boundedness, jungelzamponi,jungelzamponi2} and references therein. We will exploit this gradient flow structure to develop a high-order stable discontinuous Galerkin (DG) method for these cross-diffusion systems.

Let $\Omega$ be a bounded domain in $\RR^d$. We are interested in solving the
following initial value problem of the cross-diffusion system. 
\begin{equation}\label{eq-gfs}
  \left\{
\begin{aligned}
  \partial_t \brho = \nabla \cdot \left(F(\brho) \nabla \bxi(\brho) \right)
  &:= \sum_{l=1}^d \partial_{x_l} \left(F(\brho) \partial_{x_l} \bxi(\brho) \right),\qquad
  (\bx,t) \in  \Omega \times [0,\infty),\\
  \brho(\bx,0) &= \brho_0(\bx).
\end{aligned}\right.
\end{equation}
Here $\brho = \brho(\bx,t) $ and $\brho= (\rho_1,\cdots,\rho_m)^T$ is a vector-valued
function as well as $\bxi :=  \frac{\delta E}{\delta \brho} =  (\partial_{\rho_1} e,\cdots, \partial_{\rho_m} e)^T$ with $e = e(\brho)$ being a scalar-valued twice differentiable convex function. 
We also assume $F$ to be an $m\times m$ positive-semidefinite $\brho$-dependent matrix, in the
sense that $\bz \cdot F \bz \geq 0$. Note $F$ can be non-symmetric. Equation \eqref{eq-gfs} can be written in divergence form as $\partial_t \brho = \nabla\cdot(A(\brho)\nabla\brho)$, with the matrix $A(\brho)
= F(\brho) D\bxi(\brho) = F(\brho) D^ 2 e(\brho)$ possibly neither symmetric nor positive-definite. 

The system \eqref{eq-gfs} possesses a formal gradient flow structure governed by
the entropy functional 
\begin{equation}\label{eq-ent}
  E =
\int_\Omega e(\brho) dx.
\end{equation}The system can be rewritten as 
$
  \partial_t \brho = \nabla \cdot \left(F(\brho) \nabla \frac{\delta E}{\delta
\brho} \right)$.
One can see at least for classical solutions that
\begin{equation}\label{eq-entdecay}
  \frac{d }{dt}E = \int_\Omega  {\partial}_t\brho \cdot \bxi dx = - \int_\Omega
D\bxi : F D\bxi dx = - \sum_{l=1}^d \int_\Omega
\partial_{x_l} \bxi \cdot F \partial_{x_l}\bxi dx \leq 0\,,
\end{equation}
with the usual notation for the matrix product of square matrices $A:B:=\sum_{i,j} a_{ij} b_{ij}$. The Liapunov functional \eqref{eq-ent} due to \eqref{eq-entdecay} indicates certain well-posedness
of the initial value problem \eqref{eq-gfs}. However, in applications 
with $\brho$ representing non-negative physics quantities, such as 
species densities, mass fractions and water heights, the well-posedness 
usually relies heavily on the positivity of the solution.
For example, with $F(\brho) = \diag(\brho)$, $F(\brho)$ is semi-positive 
definite only when $\brho$ is non-negative. 
Violating the non-negativity may result in a non-decreasing entropy 
in \eqref{eq-entdecay}. Furthermore, in problems that
a logarithm entropy $E = \int_\Omega \sum_{l=1}^m\rho_l (\log 
\rho_l-1) dx$ is considered, the entropy may not even be well-defined 
when $\brho$ admits negative values. 

The entropy structure is crucial to understand various theoretical properties of 
cross-diffusion systems, such as existence, regularity and long time asymptotics of 
weak solutions, see \cite{jungel2015boundedness, jungelzamponi,jungelzamponi2}. It is desirable to design numerical schemes that preserve the entropy decay 
in \eqref{eq-entdecay}, and would also be positivity-preserving if the solution to the continuum equation is
non-negative, due to the concern we have mentioned. 
Various efforts have been spent on numerical methods for scalar problems with a
similar entropy structure, including the mixed finite element method \cite{burger2010mixed}, the finite 
volume method \cite{bessemoulin2012finite,carrillo2015finite}, the direct DG
method \cite{LH,liu2016entropy}, 
optimal mass transportation based methods \cite{CM,JMO,CRW,CDM}, the particle method \cite{carrillo2017numerical} 
and the blob method \cite{craig2016blob, carrillo2017blob}. Some methods can also be
generalized to systems, for example the Poisson-Nerst-Plack system
\cite{liu2017free} and system of interacting species with cross-diffusion
\cite{carrillo2018convergence}. 

In this paper, we extend the DG method in
\cite{sun2018discontinuous} for scalar gradient flows to cross-diffusion
systems. The DG method is a class of finite element methods utilizing
discontinuous piecewise polynomial spaces, which was originally designed
for solving transport equations \cite{reed1973triangular} 
and was then developed for nonlinear
hyperbolic conservation laws \cite{cockburn1991runge1, cockburn1989tvb2,
cockburn1989tvb3,cockburn1990runge4, cockburn1998runge5}. 
The method has also been generalized for problems involving diffusion and higher
order derivatives, for example, the local DG method \cite{bassi1997high, 
cockburn1998local}, the ultra-weak DG method \cite{cheng2008discontinuous} 
and the direct DG method \cite{liu2009direct}. 
The method preserves local conservation, achieves high-order accuracy, is able
to handle
complex geometry and features with good $h$-$p$ adaptivity and high parallel
efficiency. As an effort to incorporate these potential advantages in gradient flow simulations, 
in \cite{sun2018discontinuous}, 
we adopted the technique from \cite{chen2017entropy} 
to combine the local DG methods with Gauss-Lobatto quadrature rule, and 
designed a DG method for scalar gradient flows 
that is entropy stable on the semi-discrete level. This method also 
features weak positivity with Euler forward time stepping method. 
Hence after applying a scaling limiter and the 
strong-stability-preserving Runge-Kutta (SSP-RK) time discretization
\cite{gottlieb2001strong}, the fully 
discrete scheme produces non-negative solutions. This positivity-preserving
procedure is established in 
\cite{zhang2010maximum,zhang2017positivity,srinivasanapositivity}. 

The main idea for handling cross-diffusion systems is to formally
rewrite the problem into decoupled equations and apply our previous
numerical strategy in \cite{sun2018discontinuous} for scalar gradient flows
to the unknown density vector component-wise. In fact, the system \eqref{eq-gfs} can be rewritten as
\[
\partial_t \brho = \nabla \cdot (\diag(\brho) \bv),\qquad \bv = \diag(\brho)^{-1}F(\brho)\bu, \qquad \bu = \nabla \bxi.
\]
In the particular case of one dimension, we are reduced to apply our previous scheme in \cite{sun2018discontinuous} to $\partial_t \rho_l = \partial_x (\rho_l  v_l)$, for $l=1,\dots, m$.
The numerical fluxes are properly chosen to ensure the entropy decay and the
weak positivity. In this approach, the boundedness of $\diag(\brho)^{-1}F(\brho)$
is required for $\bv$ to be well-defined.  This assumption does 
hold in various applications, see examples in Section \ref{sec-tests1}
and Section \ref{sec-tests2}.

The rest of the paper is organized as follows. In Section \ref{sec-1dnum} and
Section \ref{sec-2dnum}, we propose our DG schemes for
one-dimensional and two-dimensional gradient flow systems respectively. Each
section is composed of three parts: notations, semi-discrete scheme and entropy
inequality, and fully discrete scheme as well as positivity-preserving
techniques for producing non-negative solutions (with central and Lax-Friedrichs
fluxes). In Section \ref{sec-tests1} and Section \ref{sec-tests2}, we show several numerical examples to 
examine the performance of the numerical schemes. Finally, concluding remarks are given
in Section \ref{sec-conclusions}. 


\section{Numerical method: one-dimensional case}\label{sec-1dnum}
\setcounter{equation}{0}
\setcounter{figure}{0}
\setcounter{table}{0}
\subsection{Notations}
In this section, we consider the one-dimensional
problem 
\begin{equation}\label{eq-1d}
\left\{\begin{aligned} 
    \partial_t \brho &= \partial_x \left(F\partial_x \bxi\right), x\in
    \Omega\subset \RR,
    t>0.\\
  \brho(x,0) &= \brho^0(x).
\end{aligned}\right.
\end{equation}
For simplicity, the compact support or periodic boundary condition is 
assumed, while it can
also be extended to the zero-flux boundary condition.

Let $I_i = (x_{i-\hf},x_{i+\hf})$ and $I = \cup_{i=1}^N I_i$ be a regular partition of the domain $\Omega$. 
The length of the mesh cell is denoted
by $h_i = x_{i+\hf}-x_{i-\hf}$. We assume $h = \max_{i=1,\ldots,N} h_i$ and $h_i \geq c_{mesh}h$ for
some fixed constant $c_{mesh}$. The numerical solutions are defined in the
tensor product space of discontinuous piecewise polynomials
\[\bV_h = \prod_{l = 1}^mV_h, \qquad V_h = \{v_h(x): v_h|_{I_i} \in P^k(I_i)\}.\]
$P^k(I_i)$ is the space of polynomials of degree $k$ on $I_i$. Functions in
$\bV_h$ ($V_h$)
can be double-valued at cell interfaces. We use $\bv_h^-$ ($v_{h,l}^-$) and
$\bv_h^+$ ($v_{h,l}^+$) to represent
the left and right limits of $\bv_h \in \bV_h$ ($v_{h,l}\in V_h$) respectively. We
also introduce notations $\{\bv_h\} = \frac{1}{2}(\bv_h^++\bv_h^-)$ ($\{v_{h,l}\}
= \frac{1}{2}(v_{h,l}^++v_{h,l}^-)$) for the
averages and $[\bv_h] = \bv_h^+-\bv_h^-$ ($[v_h] =
v_h^+-v_h^-$) for the jumps. 

Let $\{x_i^r\}_{r = 1}^{k+1}$ be the $k+1$ Gauss-Lobatto quadrature points
on $I_i$ and $\{w_i^r\}_{r = 1}^{k+1}$ be the corresponding weights 
associated with the
normalized interval $[-1,1]$. We
introduce the following notations for the quadrature rule.
\[\ninti \bvphi\cdot \bpsi dx := \frac{h_i}{2}\sum_{r = 1}^{k+1} w_r
\bvphi(x_i^r)\cdot \bpsi(x_i^r) .\]
\[\ninti \bvphi\cdot \partial_x\bpsi dx := \frac{h_i}{2}\sum_{r = 1}^{k+1} w_r
\bvphi(x_i^r)\cdot\partial_x(\mI \bpsi)(x_i^r) .\]
Here $\mI$ is a component-wise interpolation operator. Namely, $\mI\bpsi: \RR
\to \RR^m$, and the $l$-th component of $\mI\bpsi$ is the $k$-th order interpolation polynomial of $\psi_l$ at Gauss-Lobatto 
points. We also define $\nintO \cdot dx = \sum_{i=1}^N \ninti \cdot dx$. 

\subsection{Semi-discrete scheme and entropy stability}
We firstly rewrite \eqref{eq-1d} into a first-order system.
  \begin{align*}
    \partial_t \brho& = \partial_x (F\bu) ,\\
    \bu& = \partial_x \bxi.
  \end{align*}
On each mesh cell $I_i$, we multiply with a test function and integrate by parts
to get
  \begin{align*}
    \inti \partial_t \brho \cdot \bvphi dx&= - \inti  F \bu \cdot \partial_x \bvphi
    dx + (F\bu \cdot \bvphi)_{i+\hf}^- - (F\bu\cdot \bvphi)_{i-\hf}^+
    ,
    \\
    \inti \bu\cdot \bpsi dx &= -\inti \bxi\cdot \partial_x \bpsi dx +
    (\bxi\cdot \bpsi)_{i+\hf}^- - (\bxi\cdot \bpsi)_{i-\hf}^+.
  \end{align*}
The numerical scheme is obtain by taking trial and test functions from the
finite element space, replacing cell interface values with numerical fluxes and
applying the quadrature rule. More precisely, we seek
$\brho_h, \bu_h \in \bV_h$ such that for all $\bvphi_h, \bpsi_h \in \bV_h$, 
\begin{subequations}\label{eq-1dscheme}
\begin{align}
    \ninti \partial_t \brho_h \cdot \bvphi_h dx&= - \ninti  F_h\bu_h \cdot \partial_x
    \bvphi_h dx + (\wFbu \cdot \bvphi_h^-)_{i+\hf} - (\wFbu \cdot \bvphi_h^+)_{i-\hf} 
    ,\label{eq-1dscheme-1}\\
    \ninti \bu_h\cdot \bpsi_h dx &= -\ninti \bxi_h\cdot \partial_x \bpsi_h dx +
    (\wbxi\cdot \bpsi_h^-)_{i+\hf} - (\wbxi \cdot
    \bpsi_h^+)_{i-\hf}.\label{eq-1dscheme-2}
\end{align}
\end{subequations}
Here $F_h = F(\brho_h)$, $\bxi_h = \bxi(\brho_h)$, $\brho_h =
(\rho_{h,1},\cdots,\rho_{h,m})^T$ and $\bu_h$, $\bvphi_h$, $\bpsi_h$ are defined
similarly. We have the following choices for $\wFbu$ and
$\wbxi$.

(1) Central and Lax-Friedrichs fluxes:
\begin{equation}\label{eq-1dflux}
  \wbxi = \{\bxi_h\},\qquad \wFbu = \{F_h\bu_h\} + \frac{\alpha}{2}[\brho_h]=\{\diag(\brho_h)\bv_h\} + \frac{\alpha}{2}[\brho_h],
\end{equation}
where
\[\alpha = \max(|\bv_{h}^+|_\infty,|\bv_{h}^-|_\infty),\qquad \bv_h =
\diag(\brho_h)^{-1}F_h\bu_h,\]
and $|\cdot|_\infty$ is the $l^\infty$ norm on $\RR^m$. 

(2) Alternating fluxes:
\begin{align}\label{eq-1dflux-alter}
  \wbxi = \bxi_h^\mp,\qquad \wFbu = (F_h\bu_h)^\pm.
\end{align}

\begin{REM}
 Due to \eqref{eq-1dflux}, the scheme \eqref{eq-1dscheme-1} is
  equivalent to
  \[\ninti \partial_t \rho_{h,l} \vphi_{h,l} dx = - \ninti
    \rho_{h,l}v_{h,l} \partial_x
    \vphi_{h,l} dx +  (\widehat{\rho v}_l \vphi^-_{h,l})_{i+\hf} -
  (\widehat{\rho v}_{l}\vphi^+_{h,l})_{i-\hf} \]
  with \[\widehat{\rho v}_l = \frac{1}{2}(\rho_{h,l}^+v_{h,l}^++
  \rho_{h,l}^-v_{h,l}^-) + \frac{\alpha}{2}(\rho_{h,l}^+-\rho_{h,l}^-),\]
   which is formally reduced to the scalar case discussed in
   \cite{sun2018discontinuous}.
\end{REM}
We define the discrete entropy
\begin{equation}\label{eq-1dent}
  E_h = \nintO e(\brho_h) dx.
\end{equation}
As is stated in Theorem \ref{thm-1dent}, the numerical scheme has a decaying entropy as that for the continuum
system. 

\begin{THM}[Entropy inequality]\label{thm-1dent}
  Let $\brho_h$ and $\bu_h$ be obtained from
  \eqref{eq-1dscheme}, \eqref{eq-1dflux} and \eqref{eq-1dflux-alter}. $E_h$ is the associated discrete
  entropy defined in \eqref{eq-1dent}.
Then 

(1) for central and Lax-Friedrichs fluxes,
\[\frac{d}{dt}E_h = - \overset{\sim}{\int_\Omega} \bu_h \cdot F_h\bu_h  dx
  -\frac{1}{2}\sum_{i=1}^N \alpha_{i+\hf}
[\bxi_h]_{i+\hf}\cdot [\brho_h]_{i+\hf}\leq 0,\]

(2) for alternating fluxes,
\[\frac{d}{dt}E_h = - \overset{\sim}{\int_\Omega} \bu_h \cdot F_h\bu_h  dx.\]
\end{THM}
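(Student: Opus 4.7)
The plan is to test the two equations of the semi-discrete scheme against carefully chosen elements of $\bV_h$ that serve as polynomial stand-ins for $\bxi_h$ and $F_h\bu_h$ — functions which in general do not lie in $\bV_h$. The pivotal observation is that every $\nint$ sees only the $k+1$ Gauss-Lobatto nodes on each cell, at which the degree-$k$ interpolant $\mI$ agrees with the underlying function. Hence $\mI\bxi_h \in \bV_h$ is freely interchangeable with $\bxi_h$ under $\nint$, and likewise $\mI(F_h\bu_h) \in \bV_h$ with $F_h\bu_h$; moreover, the interface values of these interpolants coincide with the one-sided limits of the originals because Gauss-Lobatto nodes include the cell endpoints.

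Concretely, I would first write $\frac{d}{dt}E_h = \sum_i \ninti \bxi_h \cdot \partial_t \brho_h\, dx = \sum_i \ninti \mI\bxi_h \cdot \partial_t \brho_h\, dx$ by the chain rule, then substitute $\bvphi_h = \mI\bxi_h$ in \eqref{eq-1dscheme-1}. Independently, taking $\bpsi_h = \mI(F_h\bu_h)$ in \eqref{eq-1dscheme-2} extracts $\ninti \bu_h \cdot F_h\bu_h\, dx$ on its left-hand side. The two resulting cell-wise identities still carry the volume terms $\ninti F_h\bu_h\cdot \partial_x \mI\bxi_h\, dx$ and $\ninti \bxi_h \cdot \partial_x \mI(F_h\bu_h)\, dx$, which I would eliminate against each other through the Gauss-Lobatto summation-by-parts property on the polynomials $\mI\bxi_h$ and $\mI(F_h\bu_h)$: writing $(pq)' = p'q + pq'$ and noting that $(pq)'$ has degree at most $2k-1$, Gauss-Lobatto integrates it exactly and returns the true boundary jump $(pq)_{i+\hf}^- - (pq)_{i-\hf}^+$.

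After summing over cells, the volume contributions collapse to $-\nintO \bu_h \cdot F_h \bu_h\, dx$, while the face contribution at each interior interface can be rearranged, using the elementary identity $[ab] = \{a\}[b] + \{b\}[a]$, into
\[
\Theta_{i+\hf} = \bigl(\{\bxi_h\} - \wbxi\bigr) \cdot [F_h\bu_h] + \bigl(\{F_h\bu_h\} - \wFbu\bigr) \cdot [\bxi_h].
\]
Substituting the central/Lax-Friedrichs flux \eqref{eq-1dflux} makes the first bracket vanish and reduces the second to $-\tfrac{\alpha}{2}[\bxi_h]\cdot[\brho_h]$, giving case (1); substituting either alternating pairing from \eqref{eq-1dflux-alter} makes the two products cancel exactly, giving case (2). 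Periodic or compactly supported data removes the unmatched physical endpoint contributions. The sign conclusion in (1) then follows from $\bu_h \cdot F_h \bu_h \geq 0$ (positive-semidefiniteness of $F$) together with $[\bxi_h]\cdot[\brho_h] \geq 0$, which is the monotonicity of $\brho \mapsto \bxi(\brho)$ and thus a direct consequence of the convexity of $e$.

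The main obstacle I anticipate is the careful bookkeeping of one-sided limits when telescoping the boundary contributions from two adjacent cells and combining them with the SBP-generated jump. The nonlinearity of $F$ and $\bxi$, which would otherwise obstruct both the choice of test function and the use of integration by parts, is completely neutralized by the interpolate-then-test device; this is why no regularity assumption beyond positive-semidefiniteness of $F$ and convexity of $e$ enters the argument.
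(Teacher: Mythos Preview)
Your proposal is correct and follows essentially the same route as the paper: test the two equations with (the interpolants of) $\bxi_h$ and $F_h\bu_h$, use Gauss--Lobatto exactness on degree $2k-1$ integrands to integrate by parts, and then substitute the fluxes into the resulting interface residual. The only cosmetic differences are that the paper writes the interface term in the equivalent asymmetric form $\bigl((F_h\bu_h)^- - \wFbu\bigr)\cdot[\bxi_h] + [F_h\bu_h]\cdot(\bxi_h^+ - \wbxi)$ rather than your averaged form, and verifies $[\bxi_h]\cdot[\brho_h]\geq 0$ via the mean value theorem with $D^2 e$ rather than citing monotonicity of the gradient directly.
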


\begin{proof}
A direct computation yields
$$
\frac{d}{dt}E_h = \sum_{i=1}^N \ninti \partial_t \brho_h \cdot \bxi_h dx\,.
$$
Then, with $\bvphi_h = \bxi_h$ in \eqref{eq-1dscheme-1}, we have
\begin{equation*}
  \begin{aligned}
    \ninti \partial_t \brho_h \cdot \bxi_h dx
    =& - \ninti F_h \bu_h \cdot
    \partial_x \bxi_h dx + (\wFbu \cdot \bxi_h^-)_{i+\hf} - (\wFbu\cdot
    \bxi_h^+)_{i-\hf} \\
    =& - \inti  \cI(F_h\bu_h) \cdot \partial_x \cI(\bxi_h) dx + (\wFbu \cdot
    \bxi_h^-)_{i+\hf} - (\wFbu\cdot \bxi_h^+)_{i-\hf}.
  \end{aligned}
\end{equation*}
Here we have used the fact that $\cI(F_h\bu_h)\cdot \partial_x \cI(\bxi_h)$ is a
polynomial of degree $2k-1$ and that the Gauss-Lobatto quadrature rule is exact.
Then again with integrating by parts and the exactness of the quadrature, one
can get
\begin{equation*}
  \begin{aligned}
    \ninti \partial_t \brho_h \cdot \bxi_h dx
    =& \inti  \partial_x\cI(F_h\bu_h) \cdot  \cI(\bxi_h) dx 
    - (F_h\bu_h \cdot \bxi_h)_{i+\hf}^- + (F_h\bu_h\cdot \bxi_h)_{i-\hf}^+ \\
    &+ (\wFbu \cdot \bxi_h^-)_{i+\hf} - (\wFbu\cdot \bxi_h^+)_{i-\hf} \\
    =& \ninti \bxi_h \cdot \partial_x (F_h\bu_h)  dx 
    - (F_h\bu_h \cdot \bxi_h)_{i+\hf}^- + (F_h\bu_h\cdot \bxi_h)_{i-\hf}^+ \\
    &+ (\wFbu \cdot \bxi_h^-)_{i+\hf} - (\wFbu\cdot \bxi_h^+)_{i-\hf} \\
    =& - \ninti \bu_h \cdot F_h\bu_h  dx + (\wbxi\cdot (F_h\bu_h)^-)_{i+\hf} -
    (\wbxi \cdot (F_h\bu_h)^+)_{i-\hf}\\
    &- (F_h\bu_h \cdot \bxi_h)_{i+\hf}^- + (F_h\bu_h\cdot \bxi_h)_{i-\hf}^+
    + (\wFbu \cdot \bxi_h^-)_{i+\hf} - (\wFbu\cdot \bxi_h^+)_{i-\hf}\,,
  \end{aligned}
\end{equation*}
where in the last identity we used the scheme \eqref{eq-1dscheme-2} with $\bpsi_h=F_h\bu_h$.
After summing over the index $i$ and  using the periodicity, we obtain
\begin{equation*}
  \begin{aligned}
    \frac{d}{dt}E_h &= \sum_{i=1}^N \ninti \partial_t \brho_h \cdot \bxi_h dx
    \\
  &= - \overset{\sim}{\int_\Omega} \bu_h \cdot F_h\bu_h  dx +
  \sum_{i=1}^{N}\left(
    ((F_h\bu_h)^- -\wFbu)_{i+\hf} \cdot [\bxi_h]_{i+\hf} +
[F_h\bu_h]_{i+\hf}\cdot (\bxi^+_h-\wbxi)_{i+\hf}\right).
\end{aligned}
\end{equation*}
The proof is completed by substituting numerical fluxes in \eqref{eq-1dflux} and
\eqref{eq-1dflux-alter}. Note that
\begin{align*}
\sum_{i=1}^N \alpha_{i+\hf}[\bxi_h]_{i+\hf}\cdot
[\brho_h]_{i+\hf} =&\,  
\sum_{i=1}^N \alpha_{i+\hf}(\nabla e ((\brho_h)_{i+\hf}^+)-\nabla e ((\brho_h)_{i+\hf}^-))\cdot
((\brho_h)_{i+\hf}^+ - (\brho_h)_{i+\hf}^-)
\\
=&\,  \sum_{i=1}^N \alpha_{i+\hf}[\brho_h]_{i+\hf}\cdot
D^2e(\bzeta_{i+\hf}) [\brho_h]_{i+\hf}\geq 0
\end{align*}
due to the convexity of
$e$. Here $\bzeta_{i+\hf}$ lies in the line segment 
between $(\brho_h)_{i+\hf}^-$ and $(\brho_h)_{i+\hf}^+$.
\end{proof}

\begin{REM}
  Both choices of numerical fluxes are entropy stable, while
  they have different advantages and disadvantages. 
  For central and Lax-Friedrichs fluxes defined in \eqref{eq-1dflux}, they
  preserve positive cell averages as time steps are small. Hence 
  the positivity-preserving limiter can be applied 
  for producing non-negative solutions. Details are given 
  in the next section. However, they are limited to problems satisfying
  $|\diag(\brho)^{-1} F(\brho)|\leq C$ with $|\cdot|$ being the matrix
  norm, so that $\bv_h$ can be well-defined.
  Furthermore, for odd-order polynomials, one would observe 
  a reduced order of accuracy with this particular
  choice of $\alpha$. See accuracy tests in Section \ref{sec-tests1} 
  and Section \ref{sec-tests2}. Alternating fluxes do not have such order
  reduction and restriction on $F$, while it may fail to preserve non-negative
  cell averages after one Euler forward step. 
\end{REM}

\begin{REM}
  Due to the possible degeneracy of the problem, in general it is not easy to
  extend the entropy decay property to fully discrete explicit schemes. 
 When $F$ is uniformly positive-definite
  and $e$ is strongly convex, a fully discrete entropy inequality can be
  derived. We postpone to Appendix \ref{sec-prooffully} a proof of such result, where we consider the Euler    forward time discretization with central and Lax-Friedrichs fluxes. 
\end{REM}

\subsection{Fully discrete scheme and preservation of positivity}

\subsubsection{Euler forward time stepping}

When central and Lax-Friedrichs fluxes are used, one can adopt the methodology
introduced by Zhang et al. in
\cite{zhang2010maximum,zhang2017positivity,srinivasanapositivity} to enforce
positivity of the solution.

It can be shown, when the solution achieves non-negative values at the current
time level, with fluxes defined in
\eqref{eq-1dflux} and
a sufficiently small time step, the Euler forward time discretization will produce
a solution with non-negative cell averages at the next time level. This is
referred to as the weak positivity. 
Then we can apply a scaling limiter, squashing the solution
polynomials towards 
cell averages to avoid inadmissible negative values. 
It is shown in \cite{zhang2017positivity} that the limiter preserves the high-order spatial
accuracy.\\

\begin{THM}\label{thm-1dposi}
  1. Suppose $\rho_{h,l}^n(x_i^r) \geq 0$ for all $i$ and $r$.
  Take \[\tau \leq \min_{\substack{i=1,\ldots,N}} \left(
          \left(\frac{w_1h_i}{\alpha+v_{h,l}^+}\right)_{i-\hf},
      \left(\frac{w_{k+1}h_i}{\alpha-v_{h,l}^-}\right)_{i+\hf}\right),\]
    \item then the preliminary solution $\rho_{h,l}^{n+1,\pre}$ obtained through the Euler forward time
    stepping 
    \[\ninti \frac{\brho_{h}^{n+1,\pre}-\brho_h^n}{\tau} \cdot \bvphi dx = 
      -\ninti F\bu \cdot \partial_x \bvphi dx + (\wFu\cdot \bvphi_h^-)_{i+\hf} -
    (\wFu\cdot \bvphi_h^+)_{i-\hf}.\]
    has non-negative cell averages.
    
    2. Let \[\theta_{l,i} =
      \min\left(\frac{(\barrho_{h,l})_i^{n+1,\pre}}{(\barrho_{h,l})_i^{n+1,\pre}-
      \rho^{\min}_{l,i}}, 1\right), \qquad \rho^{\min}_{l,i} =
    \min_r\left(\rho_{h,l}^{n+1,\pre}(x_i^r)\right). \]
    Define $\rho_{h,l}^{n+1}$ so that 
    \[\rho_{h,l}^{n+1}(x_i^r) = (\barrho_{h,l})_i^{n+1,\pre} +
    \theta_{l,i}(\rho_{h,l}^{n+1,\pre}(x_i^r) - (\barrho_{h,l})_i^{n+1,\pre}).\]
    Then $\rho_{h,l}^{n+1}(x_i^r)$ is non-negative. Furthermore, the interpolation
    polynomial $\rho_{h,l}^{n+1}(x)$ maintains spatial accuracy in the sense that
    \begin{equation*}
      |(\rho_{h,l})_i^{n+1}(x)-(\rho_{h,l})_i^{n+1,\pre}(x)|\leq C_k\max_{x\in
    I_i}|\rho_l(x,t_{n+1})-(\rho_{h,l})_i^{n+1,\pre}(x)|,
    \end{equation*}
    where $C_k$ is a constant depending only on the polynomial degree $k$. 

\end{THM}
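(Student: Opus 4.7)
The plan is to exploit the observation in the Remark following \eqref{eq-1dflux}: with central and Lax-Friedrichs fluxes the scheme \eqref{eq-1dscheme-1} decouples component-wise into $m$ scalar conservative updates of the form $\partial_t \rho_{h,l}=\partial_x(\rho_{h,l}v_{h,l})$, so the analysis for each index $l$ reduces to the scalar Zhang-Shu argument already carried out in \cite{sun2018discontinuous}.

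For part 1, I would test \eqref{eq-1dscheme-1} against $\bvphi_h = \mathbf{e}_l\,\chi_{I_i}$ to obtain the cell-average update
\[
h_i(\barrho_{h,l})_i^{n+1,\pre} = h_i(\barrho_{h,l})_i^n + \tau\bigl[(\widehat{\rho v}_l)_{i+\hf} - (\widehat{\rho v}_l)_{i-\hf}\bigr].
\]
Since $\rho_{h,l}^n|_{I_i}\in P^k(I_i)$, the Gauss-Lobatto rule is exact on it and $h_i(\barrho_{h,l})_i^n = \frac{h_i}{2}\sum_{r=1}^{k+1}w_r\rho_{h,l}^n(x_i^r)$. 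I would then rewrite the Lax-Friedrichs flux in upwind form
\[
\widehat{\rho v}_l = \tfrac{1}{2}\rho_{h,l}^+(v_{h,l}^++\alpha) + \tfrac{1}{2}\rho_{h,l}^-(v_{h,l}^--\alpha),
\]
noting that $v_{h,l}^++\alpha\ge 0$ and $v_{h,l}^--\alpha\le 0$ by the definition of $\alpha$. Collecting terms, the interior nodes $r=2,\dots,k$ of $I_i$ contribute $\tfrac{w_r}{2}\rho_{h,l}^n(x_i^r)\ge 0$; the neighbor values $\rho_{h,l}^n(x_{i+1}^1)$ and $\rho_{h,l}^n(x_{i-1}^{k+1})$ carry the non-negative coefficients $\tfrac{\tau}{2h_i}(\alpha+v_{h,l}^+)_{i+\hf}$ and $\tfrac{\tau}{2h_i}(\alpha-v_{h,l}^-)_{i-\hf}$; and the two endpoint values $\rho_{h,l}^n(x_i^1)$, $\rho_{h,l}^n(x_i^{k+1})$ from $I_i$ itself appear with coefficients
\[
\tfrac{w_1}{2}-\tfrac{\tau}{2h_i}(\alpha+v_{h,l}^+)_{i-\hf}\quad\text{and}\quad\tfrac{w_{k+1}}{2}-\tfrac{\tau}{2h_i}(\alpha-v_{h,l}^-)_{i+\hf},
\]
whose non-negativity is exactly the stated CFL restriction. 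Under it, $(\barrho_{h,l})_i^{n+1,\pre}$ is a combination of non-negative nodal values with non-negative coefficients, hence non-negative.

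For part 2, the scaling preserves cell averages since $\int_{I_i}(\rho_{h,l}^{n+1,\pre}-(\barrho_{h,l})_i^{n+1,\pre})\,dx=0$, and at each Gauss-Lobatto node
\[
\rho_{h,l}^{n+1}(x_i^r)\ge (\barrho_{h,l})_i^{n+1,\pre}+\theta_{l,i}\bigl(\rho^{\min}_{l,i}-(\barrho_{h,l})_i^{n+1,\pre}\bigr)\ge 0
\]
by the very choice of $\theta_{l,i}$. For the accuracy claim, since the exact solution $\rho_l(\cdot,t_{n+1})\ge 0$, any negative $\rho^{\min}_{l,i}$ satisfies $|\rho^{\min}_{l,i}|\le\max_{I_i}|\rho_l(x,t_{n+1})-\rho_{h,l}^{n+1,\pre}(x)|$, and a short manipulation gives $1-\theta_{l,i}\le|\rho^{\min}_{l,i}|/((\barrho_{h,l})_i^{n+1,\pre}-\rho^{\min}_{l,i})$. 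Combining these bounds with a polynomial inverse estimate on the Gauss-Lobatto nodes, as in \cite{zhang2010maximum,zhang2017positivity}, produces the constant $C_k$ depending only on $k$.

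The main obstacle is the bookkeeping in part 1: one must carefully match each term in the upwind flux with the correct boundary value of $\rho_{h,l}^n$, distinguishing contributions from $I_i$ itself from those of its neighbors, so that only the two endpoints $x_i^1$ and $x_i^{k+1}$ produce restrictive CFL conditions matching precisely those stated in the theorem. Once this combinatorial verification is done, the rest of the argument is a component-wise transcription of the scalar analysis in \cite{sun2018discontinuous}.
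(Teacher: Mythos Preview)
Your proposal is correct and follows essentially the same route as the paper: reduce to the $l$-th component by observing that $(\wFbu)_l$ is exactly a scalar Lax-Friedrichs flux $\widehat{\rho v}_l=\tfrac12(\rho_{h,l}^+v_{h,l}^++\rho_{h,l}^-v_{h,l}^-)+\tfrac{\alpha}{2}[\rho_{h,l}]$, then apply the Zhang--Shu convex-combination argument. The only difference is presentational: the paper simply cites Lemma~2.1 of \cite{sun2018discontinuous} for part~1 and Theorem~4 of \cite{zhang2017positivity} for the accuracy claim in part~2, whereas you have unpacked those results inline; your coefficient bookkeeping and CFL identification match exactly.
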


\begin{proof}
  Note $\wFbu = \frac{1}{2}\left(\diag(\brho_h^+)\bv_h^+ +
  \diag(\brho_h^-)\bv_h^-\right) +
  \frac{\alpha}{2}(\brho_h^+-\brho_h^-)$. 
  Hence
  \[(\widehat{\rho v})_l := (\wFbu)_l =
    \hf\left(\rho_{h,l}^+v_{h,l}^++\rho_{h,l}^-v_{h,l}^-\right) +
  \frac{\alpha}{2}(\rho_{h,l}^+-\rho_{h,l}^-).\]
  Then
  \[\frac{(\barrho_{h,l})_{i}^{n+1}-(\barrho_{h,l})_i^n}{\tau} =
    \frac{(\wFbu)_{l,i+\hf}^n-(\wFbu)_{l,i-\hf}^n}{h} =
    \frac{(\widehat{\rho v})_{l,i+\hf}^n
  -(\widehat{\rho v})_{l,i-\hf}^n}{h},\]
We now invoke Lemma 2.1 in \cite{sun2018discontinuous} for the scalar case implying 
$(\barrho_{h,l})_i^{n+1}\geq 0$. The non-negativity of the solution
is ensured through the definition of $\theta_{l,i}$. For the accuracy result
we refer to Theorem 4 in \cite{zhang2017positivity}.
\end{proof}

\begin{REM}[Pointwise non-negativity]
  The procedure stated in Theorem \ref{thm-1dposi} only guarantees the
  non-negativity of the solution at all Gauss-Lobatto points. 
  This would be enough for most scenarios, since the scheme is 
  defined only through these points. 
  One can also ensure pointwise non-negativity of solution polynomials on the
  whole interval by
  taking $\rho^{min}_{i,l} = \inf_{x\in I_i} \rho_{h,l}^{n+1,\pre}(x)$.
\end{REM}
\begin{REM}[Time steps] As has been analyzed for the scalar case in
  Remark 2.2 in \cite{sun2018discontinuous}, we expect the time step to be $\tau \leq \mu h^2$ for some
  constant $\mu$. In practice, we assume a diffusion number $\mu$. If a negative
  cell average emerges, we halve the time step and redo the computation. Theorem
  \ref{thm-1dposi} guarantees that the halving procedure will end after finite
  loops. 
\end{REM}

\begin{REM}[The scaling parameter]
  For robustness of the algorithm, especially for dealing with log-type
  entropy functionals, we introduce a parameter 
  $\veps_{l,i} = \min\left(10^{-13}, (\barrho_{h,l})_i^{n+1,\pre}\right)$ and use
  $\theta^\veps_{l,i} = 
  \min\left(\frac{(\barrho_{h,l})_i^{n+1,\pre}-\veps_{l,i}}{(\barrho_{h,l})_i^{n+1,\pre}-\rho^{\min}_{l,i}},1\right)$ 
  instead of $\theta$ in our computations. In other words,
  if $(\barrho_{h,l})_i^{n+1,\pre}>10^{-13}$, we scale the polynomial to require
  it takes values not  smaller than $10^{-13}$ at Gauss-Lobatto points; 
  otherwise, we set the solution to be the constant $(\barrho_{h,l})_i^{n+1,\pre}$ in the
  interval. Note as long as $\barrho_{h,l}(x)\geq 10^{-13}$, the accuracy could still be
  maintained. 
\end{REM}
\begin{REM}[Other bounds]
  In general, it would be difficult to preserve other bounds besides
  positivity though this procedure. Similar difficulty has also been
  encountered in \cite{srinivasanapositivity}. 
\end{REM}
\subsubsection{High-order time discretization}\label{sec-highorder}

We adopt SSP-RK methods for high-order time discretizations.
Since the time step will be chosen as $\tau = \mu h^2$,
the first-order Euler forward time stepping would be sufficiently accurate for $k = 1$ to
achieve the second-order accuracy. For $k = 2$ and $k=3$, we apply 
the second-order SSP-RK method 
\begin{align*}
  \brho_h^{(1)} &= \brho_h^n + \tau \bF(\brho_h^n),\\
  \brho_h^{n+1} &= \frac{1}{2}\brho_h^n + \frac{1}{2}\left(\brho_h^{(1)} + \tau
  \bF(\brho_h^{(1)})\right).
\end{align*}
For $k = 4$ and $k = 5$, a third-order time discretization method should be
used
\begin{align*}
  \brho_h^{(1)} &= \brho_h^n + \tau \bF(\brho_h^n),\\
  \brho_h^{(2)} &= \frac{3}{4}\brho_h^n + \frac{1}{4}\left(\brho_h^{(1)} + \tau
  \bF(\brho_h^{(1)})\right),\\
  \brho_h^{n+1} &= \frac{1}{3}\brho_h^n + \frac{2}{3}\left(\brho_h^{(2)} + \tau
  \bF(\brho_h^{(2)})\right).
\end{align*}

As one can see, SSP-RK methods can be rewritten as convex combinations of Euler forward steps.
Hence if the time step is chosen to be sufficiently small, and the scaling
limiter is applied at each Euler forward stage, then the solution will remain non-negative after one full time step. 


\section{Numerical method: two-dimensional case}\label{sec-2dnum}
\setcounter{equation}{0}
\setcounter{figure}{0}
\setcounter{table}{0}
\subsection{Notations}
In this section, we generalize the previous ideas to two-dimensional systems of the form
\begin{equation*}
  \partial_t \brho = \partial_x (F \partial_x \bxi) + \partial_y (F
  \partial_y \bxi).
\end{equation*}
The problem is set on a rectangular domain $\Omega = I\times J$ with the compact support
or periodic boundary condition. We consider a regular Cartesian mesh on
$\Omega$, with $I =
\cup_{i=1}^{N^x} I_i$, $I_i = (x_{i-\hf}, x_{i+\hf})$, $J = \cup_{j = 1}^{N^y}
J_j$ and $J_j = (y_{j-\hf},y_{j+\hf})$. Let $h_i^x = x_{i+\hf}-x_{i-\hf}$ and
$h_j^y = y_{j+\hf}-y_{j-\hf}$. Then the solution is sought in the
following finite element space.
\[\bV_h = \prod_{l=1}^N V_h,\qquad V_h = \{v_h: v_h|_{I_i\times J_j} \in Q^k(I_i\times
J_j)\}.\]
$Q^k(I_i\times J_j)$ is the tensor product space of $P^k(I_i)$ and
$P^k(J_j)$. 
For $\bv_h(x,y) \in \bV_h$, 
\[\{\bv_h\}_{i+\hf}(y) =
\frac{1}{2}\left(\bv_h(x^+_{i+\hf},y)+\bv_h(x^-_{i+\hf},y)\right),\]
\[\{\bv_h\}_{j+\hf}(x) =
\frac{1}{2}\left(\bv_h(x,y^+_{j+\hf})+\bv_h(x,y^-_{j+\hf})\right),\] 
\[[\bv_h]_{i+\hf}(y) = \bv_h(x^+_{i+\hf},y)-\bv_h(x^-_{i+\hf},y),\]
\[[\bv_h]_{j+\hf}(x) = \bv_h(x,y^+_{j+\hf})-\bv_h(x,y^-_{j+\hf}).\] 
Same notations will be used in the scalar case $v_{h,l} \in V_h$. 
Finally, for the quadrature rule, we  denote by $\nintJ \cdot dy = \sum_{j=1}^{N^y}
\nintj \cdot dy$ and $\nintI \cdot dx = \sum_{i=1}^{N^x} \ninti \cdot dx$. 

\subsection{Semi-discrete scheme and entropy inequality}
The semi-discrete scheme is given as follows. Find $\brho_h$, $\bu^x_h$ and
$\bu^y_h$ in $\bV_h$, such that for all $\bvphi_h$, $\bpsi^x_h$ and $\bpsi^y_h$
in $\bV_h$, we have
  \begin{align*}
    \nintj\ninti \partial_t \brho_h \cdot \bvphi_h dx dy= &\, - \nintj \ninti \left( F_h
    \bu_h^x
    \cdot \partial_x \bvphi_h + F_h \bu_h^y \cdot \partial_y \bvphi_h \right)
  dx dy\\
  &+ \nintj \left( \wFbux_{i+\hf}(y) \cdot \bvphi_h(x_{i+\hf}^-,y) -
  \wFbux_{i-\hf}(y)\cdot
  \bvphi_h(x_{i-\hf}^+,y) \right)dy\\
  &+ \ninti \left(\wFbuy_{j+\hf}(x) \cdot\bvphi_h(x,y_{j+\hf}^-) -
  \wFbuy_{j-\hf}(x) \cdot \bvphi_h(x,y_{j-\hf}^+) \right)dx ,\\
   \nintj \ninti \bu^x_h\cdot \bpsi_h^x + \bu^y_h \cdot \bpsi_h^y dx dy = &\,
   -\nintj \ninti \left(\bxi_h\cdot \left(\partial_x \bpsi_h^x + \partial_y
   \bpsi_h^y\right)  \right)dxdy \\
   &+\nintj \left(\wbxi_{i+\hf}(y) \cdot \bpsi_h^x(x_{i+\hf}^-,y) -
   \wbxi_{i-\hf}(y)\cdot
 \bpsi_h^x(x_{i-\hf}^+,y) \right)dy \\
 &+\ninti \left(\wbxi_{j+\hf}(x) \cdot \bpsi_h^y(x,y_{j+\hf}^-) -
 \wbxi_{j-\hf}(x)\cdot
 \bpsi_h^y(x,y_{j-\hf}^+) \right)dx.
  \end{align*}
As that in the one-dimensional case, two choices of numerical fluxes can be used.

(1) Central and Lax-Friedrichs flux
\begin{align*}
  \wbxi_{i+\hf}(y) &= \{\bxi_h\}_{i+\hf}(y),\qquad 
  \wbxi_{j+\hf}(x) = \{\bxi_h\}_{j+\hf}(x),\\
  \wFbux_{i+\hf}(y) &= \left(\{F_h\bu_h^x\}_{i+\hf} +
  \frac{\alpha^x_{i+\hf}}{2}[\brho_h]_{i+\hf}\right)(y),\\
  \alpha^x_{i+\hf}(y) &=
  \max(|\bv_h^x(x_{i+\hf}^+,y)|_\infty,|\bv_h^x(x_{i+\hf}^-,y)|_\infty),\qquad 
  \bv_h^x = \diag(\brho_h)^{-1} F_h\bu_h^x,\\
  \wFbuy_{j+\hf}(x)&= \left(\{F_h\bu_h^y\}_{j+\hf}+
  \frac{\alpha^y_{j+\hf}}{2}[\brho_h]_{j+\hf}\right)(x),\\
  \alpha^y_{j+\hf}(x) &=
  \max(|\bv_h^y(x,y_{j+\hf}^+)|_\infty,|\bv_h^y(x,y_{j+\hf}^-)|_\infty),\qquad 
  \bv_h^y = \diag(\brho_h)^{-1} F_h\bu_h^y.
\end{align*}
(2) Alternating fluxes
\begin{align*}
  \wbxi_{i+\hf}(y) &= \bxi_h(x_{i+\hf}^\mp,y), \qquad 
  \wFbux_{i+\hf}(y) = (F_h\bu_h^x)(x_{i+\hf}^\pm,y),\\
  \wbxi_{j+\hf}(x) &= \bxi_h(x,y_{j+\hf}^\mp),\qquad
  \wFbuy_{j+\hf}(x) = (F_h\bu_h^y)(x,y_{j+\hf}^\pm).
\end{align*}
The numerical scheme mimics a similar entropy decay behavior as the continuum
equation. We define the discrete entropy 
\begin{equation}
  E_h = \nintJ\nintI e(\brho_h) dx dy. \label{eq-de}
\end{equation}
We state the following entropy decay property for the semi-discrete scheme. 

\begin{THM}\label{thm-2dent}
  Let $\brho_h$, $\bu_h^x$ and $\bu_h^y$ be obtained from the semi-discrete
  scheme for two-dimensional problems. $E_h$ defined in \eqref{eq-de} is the discrete
  entropy. 

  (1)  Suppose central and Lax-Friedrichs fluxes are used, then
\begin{equation*}
  \begin{aligned}
  \frac{d}{dt}E_h =& - \nintJ\nintI \left(\bu_h^x \cdot F_h\bu_h^x + \bu_h^y\cdot F_h
  \bu_h^y\right)
  dxdy\\
  &- \frac{1}{2}\sum_{i=1}^{N^x}
\nintJ\alpha_{i+\hf}^x[\bxi_h]_{i+\hf}(y)\cdot[\brho_h]_{i+\hf}(y)
dy \\
&- \frac{1}{2}\sum_{j=1}^{N^y}
\nintI\alpha_{j+\hf}^y
[\bxi_h]_{j+\hf}(x)\cdot 
[\brho_h]_{j+\hf}(x)
dx\leq 0.
  \end{aligned}
\end{equation*}

  (2) Suppose alternating fluxes are used, then
\begin{equation*}
  \frac{d}{dt}E_h = - \nintJ\nintI \left(\bu_h^x \cdot F_h\bu_h^x + \bu_h^y\cdot F_h
  \bu_h^y\right)
  dxdy\leq 0.
\end{equation*}
\end{THM}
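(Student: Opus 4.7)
The plan is to mirror the proof of Theorem \ref{thm-1dent} line by line, exploiting the tensor-product Cartesian structure so the $x$- and $y$-directions can be treated symmetrically. First I would compute
\[
\frac{d}{dt}E_h = \sum_{i,j} \nintj \ninti \partial_t \brho_h \cdot \bxi_h\, dx\,dy
\]
and test the first equation of the semi-discrete scheme with $\bvphi_h = \bxi_h$. This produces a volume term $-\nintj\ninti (F_h\bu_h^x \cdot \partial_x \bxi_h + F_h\bu_h^y \cdot \partial_y \bxi_h)\,dx\,dy$ plus the two sets of face integrals involving $\wFbux$ and $\wFbuy$ on the boundary of $I_i\times J_j$.

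The crucial observation is that on $I_i\times J_j$ the integrand $\cI(F_h\bu_h^x)\cdot \partial_x \cI(\bxi_h)$ has total degree $2k-1$ in $x$ at each fixed $y$, so the one-dimensional Gauss--Lobatto rule in $x$ is exact; the same holds in $y$ for the companion term. I can therefore integrate by parts inside the discrete integral and rewrite the volume contribution as $\nintj\ninti \bxi_h \cdot \partial_x(F_h\bu_h^x)\,dx\,dy + \nintj\ninti \bxi_h\cdot \partial_y(F_h\bu_h^y)\,dx\,dy$, modulo the usual cell-boundary exchange. Then I would substitute $\bpsi_h^x = F_h\bu_h^x$ and $\bpsi_h^y = F_h\bu_h^y$ into the second equation of the scheme, which produces exactly the dissipation $-\nintj\ninti (\bu_h^x\cdot F_h\bu_h^x + \bu_h^y \cdot F_h\bu_h^y)\,dx\,dy$ and a second batch of face integrals involving $\wbxi$.

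Summing over $i,j$ and using periodicity, the interior faces from the two batches combine to give, in each direction, boundary integrals of the pointwise quantity
\[
\bigl((F_h\bu_h^x)^- - \wFbux\bigr)\cdot[\bxi_h] + [F_h\bu_h^x]\cdot\bigl(\bxi_h^+ - \wbxi\bigr)
\]
integrated in $y$ along each $x$-face (and analogously along $y$-faces). For the alternating fluxes both factors vanish, while for central and Lax--Friedrichs fluxes the first factor equals $-\tfrac{\alpha^x_{i+\hf}}{2}[\brho_h]$ and the second vanishes, leaving the penalty $-\tfrac{1}{2}\alpha^x_{i+\hf}[\bxi_h]\cdot[\brho_h]$. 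Convexity of $e$, via the same mean-value identity used in Theorem \ref{thm-1dent}, gives $[\bxi_h]\cdot[\brho_h]=[\brho_h]\cdot D^2 e(\bzeta)[\brho_h]\geq 0$ pointwise along each face, and integrating in the transverse variable preserves the sign.

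I do not expect any genuinely new difficulty: the main obstacle is purely organizational, namely carrying the 1D identities pointwise in the transverse variable through two families of faces without losing track of which direction each jump belongs to. Because the mesh is a tensor product and the Gauss--Lobatto rule in the direction parallel to a face is exact on the relevant products of interpolated polynomials, the transverse variable plays only a passive role and the argument reduces to applying the scalar-in-$y$ (resp.\ scalar-in-$x$) analogue of Theorem \ref{thm-1dent} face by face, then integrating in the remaining variable with the discrete quadrature.
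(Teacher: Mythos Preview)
Your approach is essentially identical to the paper's: test with $\bxi_h$, use exactness of the Gauss--Lobatto rule on the degree-$(2k-1)$ integrand to integrate by parts, test the auxiliary equation with $F_h\bu_h^x$ and $F_h\bu_h^y$, and collapse the boundary terms after summing over cells. One small slip to fix: for the central/Lax--Friedrichs fluxes the second factor $\bxi_h^+-\wbxi$ is $\tfrac12[\bxi_h]$, not zero, and the first factor is $-\tfrac12[F_h\bu_h^x]-\tfrac{\alpha^x}{2}[\brho_h]$; the two $[F_h\bu_h^x]\cdot[\bxi_h]$ contributions cancel, leaving only the penalty $-\tfrac{\alpha^x}{2}[\bxi_h]\cdot[\brho_h]$ (and similarly, for one of the alternating-flux pairings neither factor vanishes individually but the two terms cancel).
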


\begin{proof}
Following the blueprint of the one dimensional case, we get by a direct computation that
 $$
 \frac{d}{dt}E_h = \sum_{j=1}^{N^y} \sum_{i = 1}^{N^x}  \nintj\ninti \partial_t
  \brho_h \cdot \bxi_h dxdy\,.
 $$
 Then using the scheme, we deduce
\begin{equation*}
  \begin{aligned}
    \nintj \ninti \partial_t \brho_h \cdot \bxi_h dx dy =& - \nintj \ninti \left( F_h \bu_h^x \cdot
    \partial_x \bxi_h + F_h \bu_h^y \cdot \partial_y \bxi_h \right) dxdy\\
  &+ \nintj \left( \wFbux_{i+\hf} \cdot \bxi_h(x_{i+\hf}^-,y) - \wFbux_{i-\hf}\cdot
  \bxi_h(x_{i-\hf}^+,y) \right) dy\\
  &+ \ninti \left( \wFbuy_{j+\hf} \cdot \bxi_h(x,y_{j+\hf}^-) -
  \wFbuy_{j-\hf} \cdot \bxi_h(x,y_{j-\hf}^+) \right) dx \,.\\
  \end{aligned}
\end{equation*}
For each fixed $y$, each component of $\cI(F_h \bu^x)\partial_x \cI(\bxi_h)$ is
a polynomial of degree $2k-1$ with respect to $x$. Analogously, $\cI(F_h \bu^y)\partial_y
\cI(\bxi_h)$ is a $(2k-1)$-th order polynomial with respect to $y$ for each fixed
$x$. Hence the Gauss-Lobatto quadrature with $k+1$ node
is exact. We then replace the quadrature rule with the exact integral,
and integrate by parts to get
\begin{equation*}
  \begin{aligned}
    \frac{d}{dt}E 
  = &\sum_{i=1}^{N^x}\sum_{j=1}^{N^y} \left[ - \nintj \inti  \cI(F_h\bu_h^x)
    \cdot \partial_x \cI(\bxi_h) dx dy -
  \ninti \intj \cI(F_h \bu_h^y) \cdot \partial_y \cI(\bxi_h)
dydx \right.\\
  &\qquad\qquad+ \nintj \left( \wFbux_{i+\hf} \cdot \bxi_h(x_{i+\hf}^-,y) - \wFbux_{i-\hf}\cdot
  \bxi_h(x_{i-\hf}^+,y)\right)  dy\\
  &\qquad\qquad\left.+ \ninti \left( \wFbuy_{j+\hf} \cdot \bxi_h(x,y_{j+\hf}^-) -
  \wFbuy_{j-\hf} \cdot \bxi_h(x,y_{j-\hf}^+) \right) dx\right]\\
    =&\sum_{i=1}^{N^x}\sum_{j=1}^{N^y}\left[ \nintj \inti
        \partial_x\cI(F_h\bu_h^x) \cdot
    \cI(\bxi_h) dx dy \right. +\ninti \intj \partial_y \cI(F_h \bu_h^y) \cdot \cI(\bxi_h)
dydx\\
&\qquad\qquad- \nintj \left( (F_h\bu_h^x\cdot \bxi_h)(x_{i+\hf}^-,y) - (F_h\bu_h^x\cdot \bxi_h)(x_{i-\hf}^+,y) \right) dy\\
&\qquad\qquad\left.- \ninti \left( (F_h\bu_h^y \cdot \bxi_h)(x,y_{j+\hf}^-) -
  (F_h\bu_h^y \cdot \bxi_h)(x,y_{j-\hf}^+) \right) dx\right.\\
  &\qquad\qquad+ \nintj \left( \wFbux_{i+\hf} \cdot \bxi_h(x_{i+\hf}^-,y) - \wFbux_{i-\hf}\cdot
  \bxi_h(x_{i-\hf}^+,y) \right) dy\\
  &\qquad\qquad\left.+ \ninti \left( \wFbuy_{j+\hf} \cdot \bxi_h(x,y_{j+\hf}^-) -
  \wFbuy_{j-\hf} \cdot \bxi_h(x,y_{j-\hf}^+) \right) dx\right].
  \end{aligned}
\end{equation*}
Again by changing back to the quadrature rule and applying the scheme, we obtain
\begin{equation*}
  \begin{aligned}
    \frac{d}{dt}E 
    =&\sum_{i=1}^{N^x}\sum_{j=1}^{N^y}\left[ 
  - \nintj\ninti  \left( \bu_h^x\cdot  F_h\bu_h^x + \bu_h^y \cdot F_h\bu_h^y \right) dxdy \right.\\
&\qquad\qquad+ \nintj \left( (F_h\bu_h^x)(x_{i+\hf}^-,y)\cdot\wbxi_{i+\hf} - (F_h\bu_h^x)(x_{i-\hf}^+,y)
\cdot \wbxi_{i-\hf} \right) dy\\
&\qquad\qquad\left.+ \ninti \left( (F_h\bu_h^y)(x,y_{j+\hf}^-)  \cdot \wbxi_{j+\hf}-
(F_h\bu_h^y)(x,y_{j-\hf}^+)\cdot \wbxi_{j-\hf} \right) dx\right.\\
&\qquad\qquad- \nintj \left( (F_h\bu_h^x\cdot \bxi_h)(x_{i+\hf}^-,y) - (F_h\bu_h^x\cdot \bxi_h)(x_{i-\hf}^+,y) \right) dy\\
&\qquad\qquad\left.- \ninti \left( (F_h\bu_h^y \cdot \bxi_h)(x,y_{j+\hf}^-) -
  (F_h\bu_h^y \cdot \bxi_h)(x,y_{j-\hf}^+) \right) dx\right.\\
  &\qquad\qquad+ \nintj \left( \wFbux_{i+\hf} \cdot \bxi_h(x_{i+\hf}^-,y) - \wFbux_{i-\hf}\cdot
  \bxi_h(x_{i-\hf}^+,y) \right) dy\\
  &\qquad\qquad\left.+ \ninti \left( \wFbuy_{j+\hf} \cdot \bxi_h(x,y_{j+\hf}^-) -
  \wFbuy_{j-\hf} \cdot \bxi_h(x,y_{j-\hf}^+) \right) dx\right].
  \end{aligned}
\end{equation*}
Since we have assumed periodicity, all cell interface terms cancel out with
alternating fluxes. 
\begin{equation*}
  \frac{d}{dt}E_h = - \nintJ\nintI \left(\bu_h^x \cdot F_h\bu_h^x + \bu_h^y\cdot F_h
  \bu_h^y\right)
  dxdy.
\end{equation*}
For central and Lax-Friedrichs fluxes, there remain additional penalty terms.
\begin{equation*}
  \begin{aligned}
  \frac{d}{dt}E_h =& - \nintJ\nintI \left(\bu_h^x \cdot F_h\bu_h^x + \bu_h^y\cdot F_h
  \bu_h^y\right)
  dxdy\\
  &- \sum_{i=1}^{N^x}
  \nintJ\frac{\alpha_{i+\hf}^x(y)}{2}\left(\bxi_h(x^+_{i+\hf},y)-\bxi_h(x^-_{i+\hf},y)\right)\cdot
\left(\brho_h(x^+_{i+\hf},y)-\brho_h(x^-_{i+\hf},y)\right)dy \\
&- \sum_{j=1}^{N^y}
\nintI\frac{\alpha_{j+\hf}^y(x)}{2}\left(\bxi_h(x,y^+_{j+\hf})-\bxi_h(x,y^-_{i+\hf})\right)\cdot
\left(\brho_h(x,y^+_{j+\hf})-\brho_h(x,y^-_{j+\hf})\right)dx \leq 0\,,
  \end{aligned}
\end{equation*}
since $F_h$ is positive-semidefinite and $e$ is convex as in the one dimensional case.
\end{proof}

\subsection{Fully discrete scheme and preservation of positivity}

One can adopt similar positivity-preserving techniques as in the one
dimensional case, when central and Lax-Friedrichs fluxes are used. 
Again, the key step is to ensure the
non-negativity of the first-order Euler forward time discretization. The
positivity will be automatically preserved by the SSP-RK time
discretization. For the first-order scenario, we could prove the following
theorem. 

  \begin{THM}\label{thm-2dposi}
    1. Suppose $\rho_{h,l}(x_i^r,y_j^s) \geq 0$ for all $i$, $j$, $r$ and $s$. Take
\begin{align*}
  \tau \leq
  \frac{1}{2}\min_{r,s,i,j}&\left(\left(\frac{w_1h_i^x}{\alpha^x+v_{h,l}^x}\right)(x_{i-\hf}^+,y_j^s), 
  \left(\frac{w_{k+1} h_i^x}{\alpha^x-v_{h,l}^x}\right)(x_{i+\hf}^-,y_j^s),\right.
  \\
  &\left. \left(\frac{w_1h_j^y}{\alpha^y+v_{h,l}^y}\right)(x_i^r,y_{j-\hf}^+),
  \left(\frac{w_{k+1}h_j^y}{\alpha^y-v_{h,l}^y}\right)(x_i^r,y_{j+\hf}^-)\right).
\end{align*}
Then the preliminary solution $(\rho_{h,l})_{i,j}^{n+1,\pre}$ obtained through the
Euler forward time discretization has non-negative cell averages. 

2. Let 
\[\theta_{l,i,j} =
  \min\left(\frac{(\barrho_{h,l})_{i,j}^{n+1,\pre}}{(\barrho_{h,l})_{i,j}^{n+1,\pre}
- m_{l,i,j}}, 1\right), \qquad  \rho^{\min}_{l,i,j} =
\min_{r,s}\left(\rho_{h,l}^{n+1,\pre}(x_{i}^r,y_j^s)\right).\]
Define $\rho_{h,l}^{n+1}$ so that
\[\rho_{h,l}^{n+1}(x_i^r, y_j^s) = (\barrho_{h,l})_{i,j}^{n+1,\pre} +
  \theta_{l,i,j}(\rho_{h,l}^{n+1,\pre}(x_i^r,y_j^s) -
(\barrho_{h,l})_{i,j}^{n+1,\pre} ).\]
Then $\rho_{h,l}^{n+1}(x_i^r,y_j^s)$ is non-negative. Furthermore, $\rho_{h,l}^{n+1}$ would
maintain the spatial accuracy achieved by $\rho_{h,l}^{n+1,\pre}$.
\end{THM}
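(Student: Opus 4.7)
The plan is to imitate the proof of Theorem \ref{thm-1dposi} by a dimensional-splitting argument that reduces the two-dimensional cell-average update to two one-dimensional updates. I test the scheme with a function supported on $I_i \times J_j$ that equals $1$ in the $l$-th component and $0$ in the others, and apply one Euler forward step. Since the central-Lax-Friedrichs flux decouples component-wise, exactly as in the Remark following Theorem \ref{thm-1dent}, the resulting cell-average update reads
\[
(\barrho_{h,l})_{i,j}^{n+1,\pre} = (\barrho_{h,l})_{i,j}^{n} - \frac{\tau}{h_i^x h_j^y}\nintj \bigl((\widehat{\rho v})_{l,i+\hf}^{x} - (\widehat{\rho v})_{l,i-\hf}^{x}\bigr) dy - \frac{\tau}{h_i^x h_j^y}\ninti \bigl((\widehat{\rho v})_{l,j+\hf}^{y} - (\widehat{\rho v})_{l,j-\hf}^{y}\bigr) dx,
\]
where $(\widehat{\rho v})_l^x$ and $(\widehat{\rho v})_l^y$ are the scalar Lax-Friedrichs fluxes along the corresponding coordinate slices.

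Next I split $(\barrho_{h,l})_{i,j}^{n+1,\pre} = \tfrac{1}{2} A^x + \tfrac{1}{2} A^y$, where $A^x$ carries only the $x$-flux difference but runs at the doubled time step $2\tau$, and $A^y$ is defined symmetrically. Expanding the outer Gauss-Lobatto quadrature in $y$ decomposes both $(\barrho_{h,l})_{i,j}^n$ and the $x$-flux integral into weighted sums over the transverse nodes $y_j^s$, so $A^x$ rewrites as $\sum_s \tfrac{w_s}{2} B^x_s$, where each $B^x_s$ is a one-dimensional Euler forward update in $x$ at step $2\tau$ with local speeds $v_{h,l}^{x}(\cdot, y_j^s)$. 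The CFL restriction in the theorem is exactly twice the one-dimensional bound of Theorem \ref{thm-1dposi} along every such slice, so Lemma 2.1 of \cite{sun2018discontinuous} applied slice by slice gives each $B^x_s \geq 0$, hence $A^x \geq 0$; symmetrically $A^y \geq 0$, and the convex combination yields $(\barrho_{h,l})_{i,j}^{n+1,\pre} \geq 0$.

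For part 2, the scaling step is the direct tensor-product analog of the one-dimensional limiter. Once $(\barrho_{h,l})_{i,j}^{n+1,\pre} \geq 0$, the affine contraction
\[
\rho_{h,l}^{n+1}(x_i^r, y_j^s) = (\barrho_{h,l})_{i,j}^{n+1,\pre} + \theta_{l,i,j}\bigl(\rho_{h,l}^{n+1,\pre}(x_i^r, y_j^s) - (\barrho_{h,l})_{i,j}^{n+1,\pre}\bigr)
\]
with the stated $\theta_{l,i,j}$ pushes the minimum nodal value to $0$ while preserving the cell average. The accompanying high-order accuracy bound follows from Theorem 4 of \cite{zhang2017positivity}, whose argument goes through verbatim on the Cartesian tensor-product space $Q^k(I_i \times J_j)$.

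The main obstacle I anticipate is tracking the transverse dependence of the penalty coefficients $\alpha^x_{i+\hf}(y)$ and $\alpha^y_{j+\hf}(x)$: since these vary along the transverse slice, the one-dimensional positivity lemma must be applied independently at every transverse Gauss-Lobatto node, with the admissible time step dictated by the worst slice. Taking the minimum over both $r$ and $s$ in the stated CFL condition encodes exactly this requirement, and once the slice-wise bookkeeping of the Gauss-Lobatto quadrature in the transverse variable is laid out carefully, the rest is a routine adaptation of the one-dimensional argument.
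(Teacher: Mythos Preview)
Your proposal is correct and follows essentially the same route as the paper: reduce component-wise to the scalar two-dimensional cell-average update, then carry out the dimensional-splitting argument---the paper simply defers this step to Theorem~3.1 of \cite{sun2018discontinuous}, whereas you spell it out explicitly as the $\tfrac{1}{2}A^x+\tfrac{1}{2}A^y$ convex combination with slice-wise application of the one-dimensional positivity lemma. The only slip is the sign in your displayed cell-average update (the flux differences enter with $+$, not $-$, under the convention $\partial_t\brho=\nabla\cdot(F\nabla\bxi)$), which does not affect the argument.
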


\begin{proof}
Using the scheme we obtain
\begin{equation*}
\begin{aligned}
  \frac{(\barrho_{h,l})_{i,j}^{n+1,\pre} -(\barrho_{h,l})_{i,j}^n}{\tau} 
  =& 
    \frac{1}{h_i^xh_j^y}\nintj (\wFbux)_{l,i+\hf}-(\wFbux)_{l,i-\hf} dy\\
    &+
  \frac{1}{h_i^xh_j^y}\ninti (\wFbuy)_{l,j+\hf}-(\wFbuy)_{l,j-\hf} dx.
\end{aligned}
\end{equation*}
  Note that the numerical fluxes can be rewritten as
  \[\wFbux =
    \frac{1}{2}\left((\diag(\brho_h)\bv_h^x)^++(\diag(\brho_h^-)\bv_h^x)^-\right) +
  \frac{\alpha^x}{2}\diag(\brho_h^+-\brho_h^-),\]
  \[\wFbuy =
    \frac{1}{2}\left( (\diag(\brho_h^+)\bv_h^y)^++(\diag(\brho_h^-)\bv_h^y)^-\right) +
  \frac{\alpha^y}{2}\diag(\brho_h^+-\brho_h^-).\]
  Let 
  \[(\widehat{\rho v^x})_l := (\wFbux)_l =
    \frac{1}{2}\left( (\rho_{h,l}v_{h,l}^x)^+ + (\rho_{h,l}v_{h,l}^x )^-\right)
  + \frac{1}{2}\alpha^x(\rho_{h,l}^+ - \rho_{h,l}^-),\]
  and
  \[(\widehat{\rho v^y})_l := (\wFbuy)_l =
    \frac{1}{2}\left( (\rho_{h,l}v_{h,l}^y)^+ + (\rho_{h,l}v_{h,l}^y )^-\right)
  + \frac{1}{2}\alpha^y(\rho_{h,l}^+ - \rho_{h,l}^-).\]
  Then we have
  \begin{equation*}
    \begin{aligned}
    (\barrho_{h,l})_{i,j}^{n+1,\pre}
    =& (\barrho_{h,l})_{i,j}^n + 
    \frac{\tau}{h_i^xh_j^y}\nintj (\widehat{\rho v^x})_{l,i+\hf}-(\widehat{\rho
    v^x})_{l,i-\hf} dy \\
    &+ \frac{\tau}{h_i^xh_j^y}\ninti (\widehat{\rho v^y})_{l,j+\hf}-(\widehat{\rho
  v^y})_{l,j-\hf} dx,
\end{aligned}
\end{equation*}
  which formally reduces to the scalar case. One can follow the proof of Theorem
  3.1 in \cite{sun2018discontinuous} to show that $\barrho_{h,l}^{n+1,\pre}\geq 0$, if the prescribed
  time step restriction is satisfied. The non-negativity of
  $\rho_{h,l}^{n+1}(x_i^r,y_j^s)$ can be justified using the definition of
  $\theta_{l,i,j}$.
\end{proof}

\begin{REM}
  As before, pointwise non-negative solutions can be obtained by taking
  $\rho^{\min}_{l,i,j} = \inf_{(x,y)\in I_i\times J_j}\rho_{h,l}^{n+1,\pre}(x,y)$. The time step $\tau = \mu h^2$ will be used for time marching. As
  negative cell averages emerge, we halve the time step. We will also use
  $\theta_{l,i,j}^\veps = \min\left(\frac{(\barrho_{h,l})_{i,j}^{n+1,\pre}-\veps_{l,i,j}}{(\barrho_{h,l})_{i,j}^{n+1,\pre}-\rho^{\min}_{l,i,j}},
  1\right)$ instead of $\theta_{l,i,j}$ when applying the scaling limiter.
  Here $\veps_{l,i,j} = \min\left(10^{-13},
  (\barrho_{h,l})_{i,j}^{n+1,\pre}\right)$.
\end{REM}

\section{One-dimensional numerical tests}\label{sec-tests1}
\setcounter{equation}{0}
\setcounter{figure}{0}
\setcounter{table}{0}
In this section, we apply the entropy stable DG method for solving one-dimensional 
problems. In Example \ref{examp:heat} and Example \ref{examp:skt}, the
accuracy of the scheme is examined. The error is measured with the discrete norm
associated with the Gauss-Lobatto quadrature rule. 
We do not invoke the
positivity-preserving limiter in either tests, to exclude order degeneracy due to the
temporal order reduction. See \cite{zhang2010maximum,srinivasanapositivity} for relevant numerical experiments.
Then we consider systems from 
tumor encapsulation (Example \ref{examp:tumor})
and 
surfactant spreading (Example \ref{examp:surf})
. In these
tests, leading fronts are formed and the issue of positivity arises. 

\begin{Examp}[Heat equations]\label{examp:heat}
  Let us first consider the initial value problem with decoupled heat equations,
\begin{equation*}
  \begin{aligned}
    \partial_t \rho_l &= \partial_{xx} \rho_l,\qquad  l = 1,2,\\   
  \end{aligned}
\end{equation*}
on $[-1,1]$ with the periodic boundary condition.
$\rho_1(x,0) = \sin\left(\pi x \right) + 2$ and $\rho_2(x,0) = \cos\left(\pi x
\right) + 2$ are taken as our initial condition. 
The system is associated with the entropy $E = \int_\Omega \rho_1(\log \rho_1-1)
+ \rho_2(\log \rho_2 - 1) dx$. Hence $\bxi = (\log \rho_1, \log \rho_2)^T$ and $F
=\diag (\brho)$. 
We compute up to $t = 0.002$, with time step set as $\tau = 0.001h^2$.
Positivity-preserving limiter is not activated. We use the Lax-Friedrichs flux and
alternating fluxes respectively for computation. As one can see from Table
\ref{tab:heat_i} and Table \ref{tab:heat_ii}, 
optimal order of convergence is achieved with the alternating fluxes. 
The order of accuracy seems to be degenerated when we use central and Lax-Friedrichs
fluxes with odd-order 
polynomials. This may related with the fact that the problem is smooth hence
the jump term $\frac{\alpha}{2}[\brho_h]$ is too small, hence the
Lax-Friedrichs flux $\wFbu$ is close to the central flux when the mesh is not
well refined, which would cause order reduction. 
In Table \ref{tab:heat_iii}, we document the error with different Lax-Friedrichs
constants, with $\alpha$ replaced by $\tilde{\alpha} = 0, 2\alpha,10\alpha$.
As one can see, the optimal order is retrieved as $\tilde{\alpha}$
becomes large. We also remark, choosing a larger Lax-Friedrichs constant does not
affect the compatibility with positivity-preserving procedure, while it does
make the time step more restrictive. 

\begin{table}[h!] 
  \centering 
  \begin{tabular}{c|c|c|c|c|c|c|c} 
    \hline   
    $k$&$N$&$L^1$ error& order&$L^2$ error& order&$L^\infty$ error& order\\ 
    \hline 
   1 &80 &8.802E-04&     -&           4.958E-04&     -&           4.226E-04&     -\\
     &160&3.948E-04&     1.16&           2.224E-04&     1.16&           1.899E-04&     1.15\\
     &320&1.609E-04&     1.30&           9.077E-05&     1.29&           7.821E-05&     1.28\\
     &640&5.647E-05&     1.51&           3.201E-05&     1.50&
     2.806E-05&     1.48\\
    \hline
    2&80 &7.760E-06&     -&           8.881E-06&     -&           1.456E-05&     -\\
    &160&9.600E-07&     3.02&           1.113E-06&     3.00&
    1.825E-06&     3.00\\
     &320&1.194E-07&     3.01&           1.395E-07&     3.00&
     2.285E-07&     3.00\\
     &640&1.489E-08&     3.00&           1.745E-08&     3.00&
     2.859E-08&     3.00\\
    \hline
    3&80 &6.027E-07&     -&           4.185E-07&     -&           8.166E-07&     -\\
     &160&5.914E-08&     3.35&           4.133E-08&     3.34&
     8.365E-08&     3.29\\
     &320&5.003E-09&     3.56&           3.551E-09&     3.54&           7.567E-09&     3.47\\
     &640&3.702E-10&     3.76&           2.685E-10&     3.73&           6.054E-10&     3.64\\
    \hline
    4&80 &6.210E-10&     -&           6.542E-10&     -&           2.194E-09&     -\\
     &160&1.887E-11&     5.04&           2.030E-11&     5.01&           6.809E-11&     5.01\\
     &320&5.823E-13&     5.02&           6.338E-13&     5.00&           2.128E-12&     5.00\\
     &640&1.808E-14&     5.01&           1.981E-14&     5.00&
     6.653E-14&    5.00\\
    \hline 
  \end{tabular} 
  \caption{Accuracy test of heat equations in Example \ref{examp:heat}, with
  central flux for
  $\wbxi$ and Lax-Friedrichs flux for $\wFbu$.} 
    \label{tab:heat_i}
\end{table} 
\begin{table}[h!] 
  \centering 
  \begin{tabular}{c|c|c|c|c|c|c|c} 
    \hline   
    $k$&$N$&$L^1$ error& order&$L^2$ error& order&$L^\infty$ error& order\\ 
    \hline 
    1&80 & 4.027E-03&     -    &  2.214E-03&     -    &   1.591E-03&     -\\
     &160& 1.006E-03&     2.00&  5.530E-04&     2.00&   4.004E-04&     1.99\\
     &320& 2.515E-04&     2.00&  1.382E-04&     2.00&   1.003E-04&     2.00\\
     &640& 6.288E-05&     2.00&  3.455E-05&     2.00&   2.508E-05&     2.00\\
    \hline 
2    &80 & 1.541E-05&     -    &  1.418E-05&     -    &   3.049E-05&     -\\
     &160& 1.912E-06&     3.01&  1.769E-06&     3.00&   3.736E-06&     3.03\\
     &320& 2.383E-07&     3.00&  2.211E-07&     3.00&   4.624E-07&     3.01\\
     &640& 2.975E-08&     3.00&  2.763E-08&     3.00&   5.752E-08&     3.01\\
    \hline 
3     &80& 1.219E-07&     -    &  1.116E-07&     -   &   4.006E-07&     -\\
&160& 7.609E-09&     4.00&  6.966E-09&     4.00&   2.513E-08&     4.00\\
&320& 4.754E-10&     4.00&  4.353E-10&     4.00&   1.572E-09&     4.00\\
&640& 2.971E-11&     4.00&  2.720E-11&     4.00&   9.828E-11&     4.00\\
    \hline 
4     &80 & 1.09E-09&    -   &  1.08E-09&      -    &   4.41E-09&     -    \\
     &160& 3.401E-11&     5.00&  3.365E-11&     5.00&   1.374E-10&     5.00\\
     &320& 1.062E-12&     5.00&  1.052E-12&     5.00&   4.278E-12&     5.01\\
     &640& 3.319E-14&     5.00&  3.286E-14&     5.00&   1.334E-13&     5.00\\
    \hline 
  \end{tabular} 
  \caption{Accuracy test of heat equations in Example \ref{examp:heat}, with
    alternating fluxes $\wbxi = \bxi_h^-$ and $\wFbu =
  (F_h \bu_{h})^+$.}
    \label{tab:heat_ii}
\end{table} 

\begin{table}[h!] 
  \centering 
  \begin{tabular}{c|c|c|c|c|c|c|c} 
    \hline   
    $\tilde{\alpha}$&$N$&$L^1$ error& order&$L^2$ error& order&$L^\infty$ error& order\\ 
    \hline 
    0&80 &8.029E-07& -    &  5.549E-07&     -    &   1.031E-06&     -    \\ 
    &160&1.007E-07& 3.00&  6.949E-08&     3.00&   1.294E-07&     2.99\\ 
     &320&1.259E-08& 3.00&  8.689E-09&     3.00&   1.618E-08&     3.00\\ 
     &640&1.575E-09& 3.00&  1.086E-09&     3.00&   2.022E-09&     3.00\\ 
    \hline 
$2\alpha$&80 &4.721E-07&     -&           3.300E-07&     -&           6.683E-07&     -\\
        &160 &4.003E-08&     3.56&           2.841E-08&     3.54&           6.043E-08&     3.47\\
        &320 &2.952E-09&     3.76&           2.148E-09&     3.73&           4.843E-09&     3.64\\
        &640 &2.028E-10&     3.86&           1.502E-10&     3.84&
        3.560E-10&     3.77\\
    \hline 
$10\alpha$&80 &1.572E-07&     -&           1.150E-07&     -&           2.605E-07&     -\\    
         &160   &1.053E-08&     3.90&           7.906E-09&     3.86&           1.892E-08&     3.78\\
         &320   &6.830E-10&     3.95&           5.221E-10&     3.92&           1.300E-09&     3.86\\
         &640   &4.356E-11&     3.97&           3.366E-11&     3.96&
         8.619E-11&     3.92\\
    \hline 
  \end{tabular} 
  \caption{Accuracy test of heat equations in Example \ref{examp:heat}, with central flux for
    $\wbxi$ and Lax-Friedrichs flux for $\wFbu = \{F_h\bu_h\}
    + \frac{\tilde{\alpha}}{2}[\brho_h]$. Here $\tilde{\alpha} =
0,2\alpha,10\alpha$.}\label{tab:heat_iii}
\end{table} 
\end{Examp}
\begin{Examp}[SKT population model]\label{examp:skt}
  We use the population model of Shigesada, Kawashima and
  Teramoto \cite{shigesada1979spatial} for the second accuracy test. All the cross-diffusion and
  self-diffusion coefficients are taken as $1$. The system is written as follows. 
  \begin{subequations}\label{eq-examp-skt}
     \begin{empheq}[left=\empheqlbrace]{align}
    \partial_t \rho_1 =\partial_x\left( (2\rho_1 + \rho_2)\partial_x \rho_1 +
    \rho_1\partial_x\rho_2\right),\\
    \partial_t \rho_2 = \partial_x\left(\rho_2 \partial_x \rho_1 +
    (\rho_1+2\rho_2)\partial_x\rho_2\right).
  \end{empheq}
  \end{subequations}
\eqref{eq-examp-skt} is governed by the entropy $E = \int_\Omega
\rho_1(\log \rho_1-1) + \rho_2(\log \rho_2 -1) dx$. Again $\bxi = (\log \rho_1,
\log \rho_2)^T$. $F =\diag(\brho)\left(
\begin{matrix}2\rho_1+\rho_2&\rho_2\\\rho_1&2\rho_2+\rho_1\end{matrix}\right)$
and $\bz \cdot F\bz = 2\rho_1^2z_1^2 + 2\rho_2^2z_2^2 +
\rho_1\rho_2(z_1+z_2)^2\geq 0$. 
The computational domain is taken as $[-\pi,\pi]$. Let $\rho_1(x,0) = e^{\hf\sin
x}$ and $\rho_2(x,0) = e^{\hf\cos(2x)}$.
We assume periodic boundary condition and compute to $t = 0.2$ with $\tau =
0.0002h^2$. The numerical solution at the next mesh level is set as a reference
to evaluate the error. The error with central and Lax-Friedrichs fluxes is documented in
Table \ref{tab:skt_i}, and that with alternating fluxes is in Table
\ref{tab:skt_ii}. The exhibited order of accuracy is similar to that in Example
\ref{examp:heat}. We again compute the error for $k = 3$ with different
constants
in the Lax-Friedrichs flux. As one can see, the order of accuracy gets close to
$4$ as $\tilde{\alpha}$ increases. 
\begin{table}[h!] 
  \centering 
  \begin{tabular}{c|c|c|c|c|c|c|c} 
    \hline   
    $k$&$N$&$L^1$ error& order&$L^2$ error& order&$L^\infty$ error& order\\ 
    \hline 
    1&20 &2.852E-02&     -&           1.009E-02&     -&           8.675E-03&     -\\
     &40 &9.370E-03&     1.61&           3.461E-03&     1.54&           2.918E-03&     1.57\\
     &80 &3.031E-03&     1.63&           1.149E-03&     1.59&           9.292E-04&     1.65\\
     &160&9.527E-04&     1.67&           3.609E-04&     1.67&
     2.765E-04&     1.75\\
    \hline
    2&20 &1.093E-03&     -&           4.283E-04&     -&           4.440E-04&     -\\
     &40 &1.022E-04&     3.42&           4.329E-05&     3.31&           4.291E-05&     3.37\\
     &80 &1.164E-05&     3.13&           5.223E-06&     3.05&
     5.183E-06&     3.05\\
     &160&1.414E-06&     3.04&           6.480E-07&     3.01&           6.428E-07&     3.01\\
    \hline
    3&20 &7.543E-05&     -&           2.840E-05&     -&           3.058E-05&     -\\
     &40 &8.282E-06&     3.19&           3.208E-06&     3.15&           3.901E-06&     2.97\\
     &80 &8.588E-07&     3.27&           3.501E-07&     3.20&
     4.431E-07&     3.14\\
     &160&8.748E-08&     3.30&           3.642E-08&     3.27&           4.812E-08&     3.20\\
    \hline
    4&20 &2.170E-06&     -&           9.649E-07&     -&           1.752E-06&     -\\
     &40 &3.209E-08&     6.08&           1.746E-08&     5.79&           3.606E-08&     5.60\\
     &80 &8.787E-10&     5.19&           5.031E-10&     5.12&           1.066E-09&     5.08\\
     &160&2.620E-11&     5.07&           1.542E-11&     5.03&
     3.288E-11&     5.02\\
    \hline 
  \end{tabular} 
  \caption{Accuracy test of the SKT population model in Example
    \ref{examp:skt}, with central flux for $\wbxi$ and 
  Lax-Friedrichs flux for $\wFbu$.} 
    \label{tab:skt_i}
\end{table} 

\begin{table}[h!] 
  \centering 
  \begin{tabular}{c|c|c|c|c|c|c|c} 
    \hline   
    $k$&$N$&$L^1$ error& order&$L^2$ error& order&$L^\infty$ error& order\\ 
    \hline 
    1&20 &8.476E-02&     -   &    3.128E-02&    -    & 2.359E-02&     -   \\
     &40 &2.055E-02&     2.04&    7.346E-03&    2.090& 5.189E-03&     2.18\\
     &80 &5.088E-03&     2.01&    1.803E-03&    2.027& 1.235E-03&     2.07\\
     &160&1.268E-03&     2.00&    4.486E-04&    2.007& 3.049E-04&     2.02\\
    \hline 
    2&20 &1.313E-03&     -   &    5.584E-04&    -    & 6.613E-04&     -   \\
     &40 &1.490E-04&     3.14&    6.531E-05&    3.096& 7.530E-05&     3.14\\
     &80 &1.815E-05&     3.04&    8.039E-06&    3.022& 9.240E-06&     3.03\\
     &160&2.250E-06&     3.01&    1.001E-06&    3.005& 1.146E-06&     3.01\\
    \hline 
    3&20 &4.008E-05&     -    &    1.908E-05&    -    & 3.948E-05&     -    \\
     &40 &2.420E-06&     4.05&    1.160E-06&    4.040& 2.645E-06&     3.90\\
     &80 &1.503E-07&     4.01&    7.200E-08&    4.010& 1.682E-07&     3.98\\
     &160&9.374E-09&     4.00&    4.493E-09&    4.002& 1.056E-08&     3.99\\
    \hline 
    4&20 &1.603E-06&     -   &    8.465E-07&    -    & 1.774E-06&     -\\
     &40 &4.754E-08&     5.08&    2.615E-08&    5.017& 6.252E-08&     4.83\\
     &80 &1.469E-09&     5.02&    8.149E-10&    5.004& 2.028E-09&     4.95\\
     &160&4.577E-11&     5.00&    2.544E-11&    5.001& 6.375E-11&     4.99\\
    \hline 
  \end{tabular} 
  \caption{Accuracy test of the SKT population model in Example
    \ref{examp:skt}, with
    alternating fluxes $\wbxi = \bxi_h^-$ and $\wFbu = (F_h \bu_h)^+$.}
    \label{tab:skt_ii}
\end{table} 

\begin{table}[h!] 
  \centering 
  \begin{tabular}{c|c|c|c|c|c|c|c} 
    \hline   
    $\tilde{\alpha}$&$N$&$L^1$ error& order&$L^2$ error& order&$L^\infty$ error& order\\ 
    \hline 
    $0$     &20 &9.502E-05& -    &  3.419E-05& -    &  3.141E-05& -    \\
            &40 &1.196E-05& 2.99&  4.260E-06& 3.00&  4.207E-06& 2.90\\
            &80 &1.501E-06& 2.99&  5.331E-07& 3.00&  5.222E-07& 3.01\\
            &160&1.878E-07& 3.00&  6.667E-08& 3.00&  6.516E-08& 3.00\\
    \hline 
  $10\alpha$&20 &3.941E-05&     -&           1.692E-05&     -&           2.287E-05&     -\\
            &40 &3.833E-06&     3.36&           1.644E-06&     3.36&
            2.245E-06&     3.35\\
            &80 &3.351E-07&     3.52&           1.477E-07&     3.48&           2.136E-07&     3.39\\
            &160&2.720E-08&     3.62&           1.238E-08&     3.58&
            1.865E-08&     3.52\\
    \hline 
$900\alpha$&20 & 2.873E-06&     -&           1.096E-06&     -&           1.981E-06&     -\\
           &40 & 1.558E-07&     4.21&           6.967E-08&     3.98&           1.213E-07&     4.03\\
           &80 & 1.075E-08&     3.86&           4.892E-09&     3.83&
           8.305E-09&     3.87\\
           &160& 7.043E-10&     3.93&           3.261E-10&     3.91&           5.546E-10&     3.90\\

    \hline 
  \end{tabular} 
  \caption{Accuracy test of the SKT population model in Example
    \ref{examp:skt} with $k = 3$, with central flux for $\wbxi$ and the Lax-Friedrichs flux for 
    $\wFbu = \{F_h\bu_h\} +
    \frac{\tilde{\alpha}}{2}[\brho_h]$. Here $\tilde{\alpha} = 0,
  10\alpha, 900\alpha$ respectively.}\label{tab:skt_iii}
\end{table} 
\end{Examp}
\begin{Examp}[Tumor encapsulation]\label{examp:tumor}
  \begin{subequations}\label{eq-1dnum-tumor}
 \begin{empheq}[left=\empheqlbrace]{align}
    \partial_t \rho_1 &= \partial_x\left( 
      (2\rho_1(1-\rho_1)-\beta\gamma\rho_1\rho_2^2)\partial_x\rho_1 
      - 2\beta\rho_1\rho_2(1+\gamma
  \rho_1)\partial_x \rho_2\right),\\
    \partial_t \rho_2 &= 
    \partial_x\left(
      (-2\rho_1\rho_2+\beta\gamma(1-\rho_2)\rho_2^2)\partial_x \rho_1
      + 2\beta\rho_2 (1-\rho_2)(1+\gamma\rho_1)\partial_x\rho_2
    \right).
  \end{empheq}
  \end{subequations}
  System \eqref{eq-1dnum-tumor} comes from the tumor encapsulation model
  proposed by Jackson and Byrne in \cite{jackson2002mechanical}, which describes
  the formation of a dense, fibrous connective tissue surrounding the benign neoplastic
  mass. In the system, $\rho_1$ corresponds to the concentration of tumors, 
  and $\rho_2$ is the concentration of the surrounding tissue. In
  \cite{jungel2012entropy}, the authors pointed out 
  the system can be formulated as a gradient flow with the entropy 
  \[E = \int_\Omega \rho_1(\log \rho_1 - 1) + \rho_2(\log \rho_2 -1) +
  (1-\rho_1-\rho_2)(\log(1-\rho_1-\rho_2)-1) dx,\]
  if $0 \leq \gamma<4/\sqrt{\beta}$. 
  Then $\bxi = (\log\frac{\rho_1}{1-\rho_1-\rho_2},
  \log\frac{\rho_2}{1-\rho_1-\rho_2})^T$ and the corresponding $F$ is
  semi-positive definite with the prescribed parameters.
  In the numerical test, we firstly consider the case $\beta = 0.0075$ and
  $\gamma = 10$. The initial condition is set as 
  \[\rho_1(x,0) = \frac{1}{8}\left(1+\tanh\frac{0.1-x}{0.05}\right),\qquad
  \rho_2(x,0) = \frac{1}{8}\left(1-\tanh\frac{0.1-x}{0.05}\right).\]
  The zero-flux boundary condition is applied in the simulation. The computational
  domain is set as $\Omega = [0,1]$ with $h = 0.02$ and $h = 0.04$
  respectively. We choose the time step to be $\tau = 0.02h^2$. The numerical
  results are given in Figure \ref{fig-tumorG}.
  We then consider problems with a strong cell-induced pressure, with $\beta =
  0.0075$ and $\gamma = 1000$. These parameters are rescaled from
\cite{jackson2002mechanical} and we drop the source term in their simulation.
In the numerical test, $0.95\theta^\veps_{l,i}$ instead of $\theta^\veps_{l,i}$
is used in the scaling limiter for robustness and all other settings are the
same. The numerical results are given in Figure \ref{fig-tumor}.
Although the system may no long possess a decaying entropy since $\gamma >
4/\sqrt{\beta}$, it seems that our numerical method still produces
satisfying results and captures the sharp leading front of $\rho_2$. 
  
  \begin{figure}
    \centering
    \begin{subfigure}[h!]{0.45\textwidth}
      \centering
      \includegraphics[width=\textwidth]{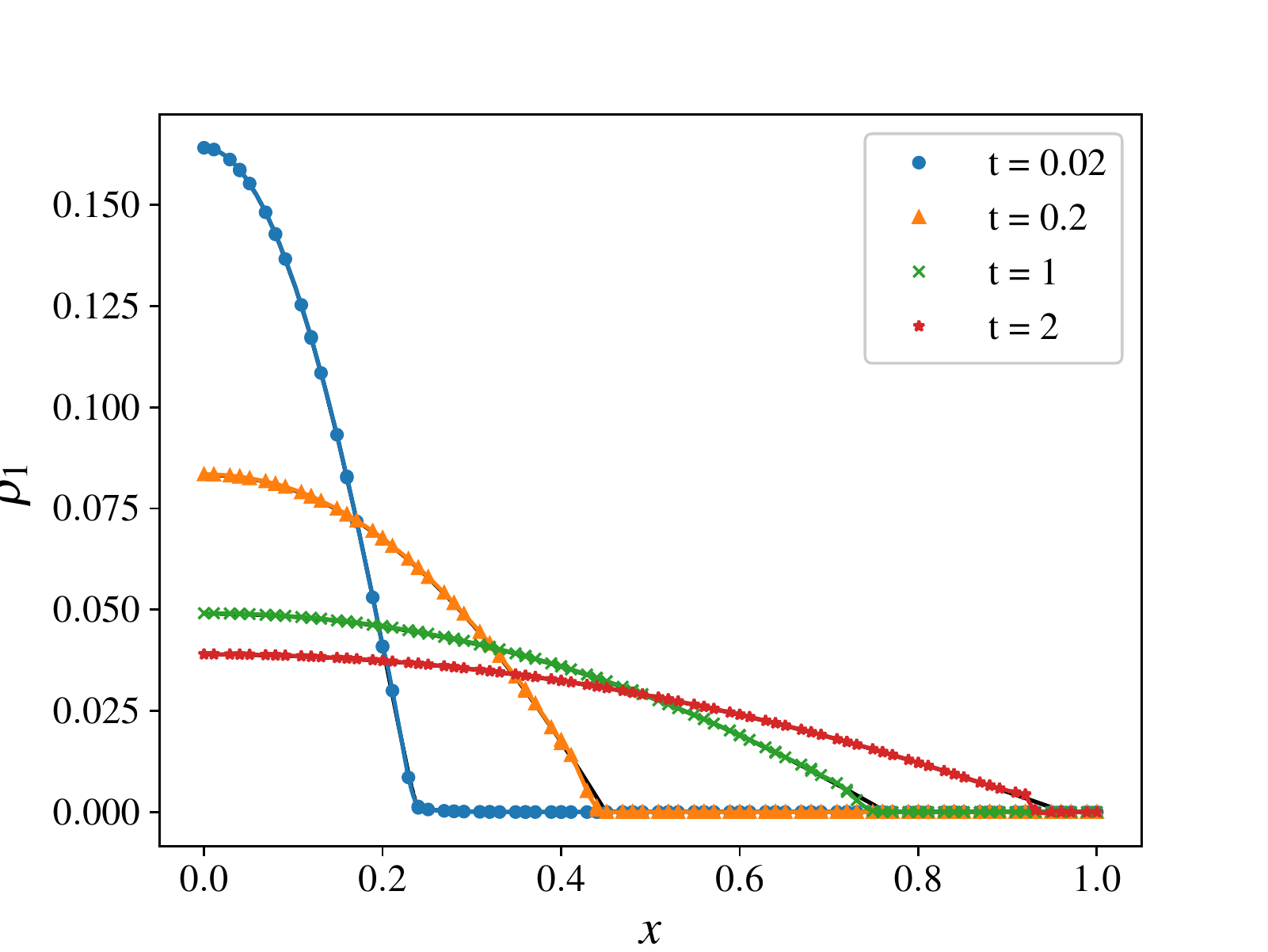}
      \caption{$\rho_1$, $h = 0.04$.}\label{subfig-tumorG-a1}
    \end{subfigure}
    ~
    \begin{subfigure}[h!]{0.45\textwidth}
      \centering
      \includegraphics[width=\textwidth]{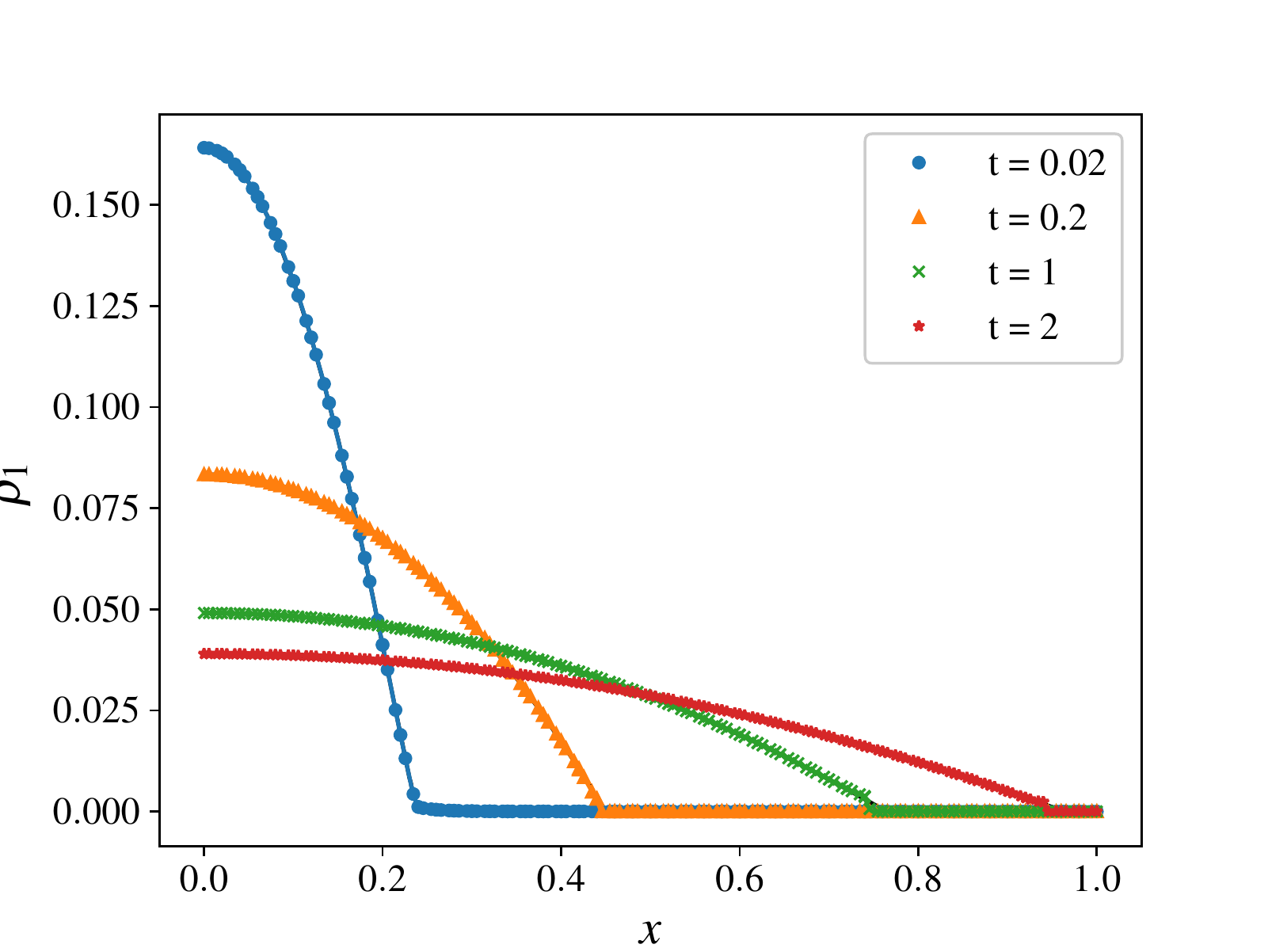}
      \caption{$\rho_1$, $h = 0.02$.}\label{subfig-tumorG-b1}
    \end{subfigure}
    ~
    \begin{subfigure}[h!]{0.45\textwidth}
      \centering
      \includegraphics[width=\textwidth]{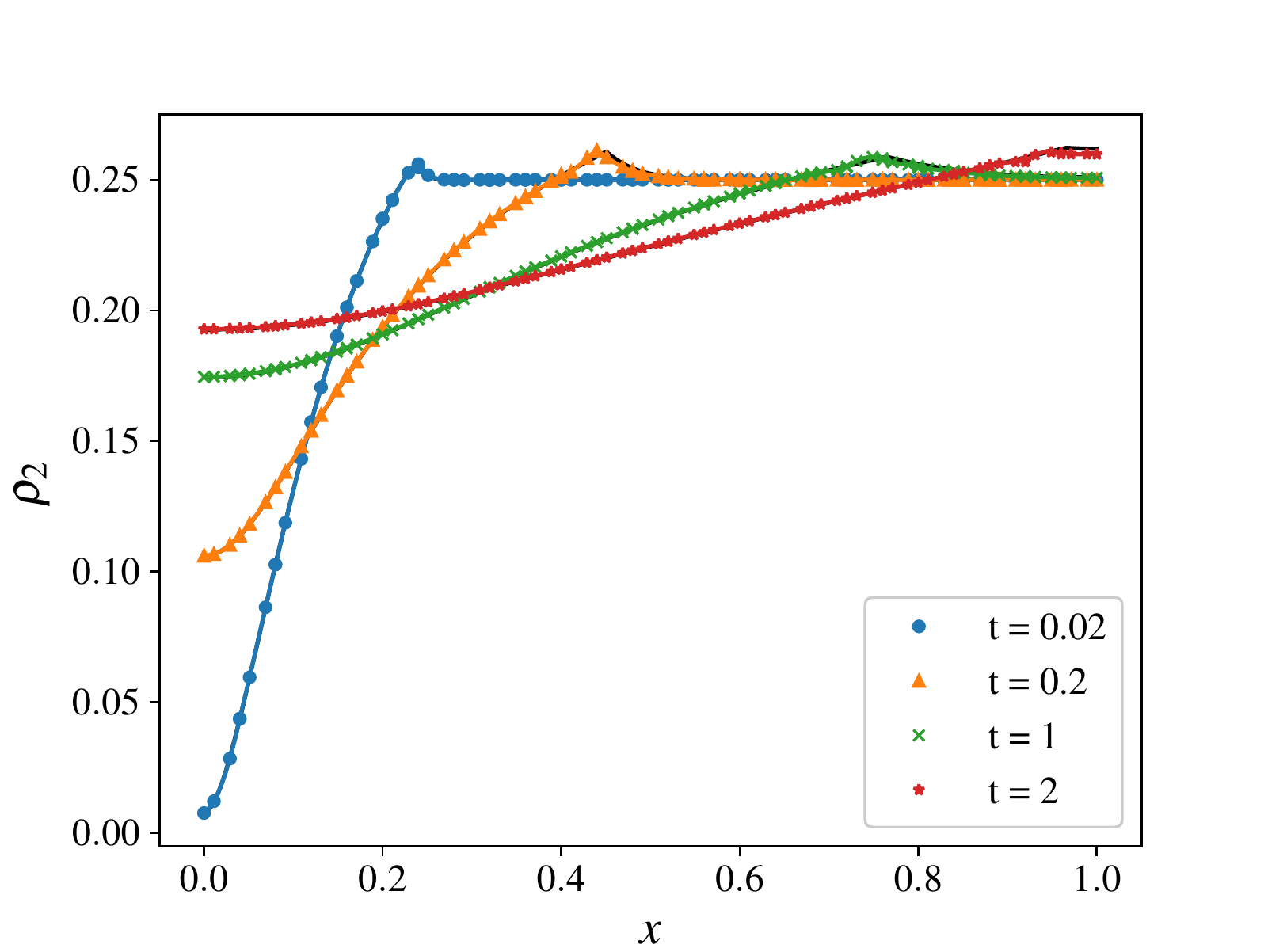}
      \caption{$\rho_2$, $h = 0.04$.}\label{subfig-tumorG-a2}
    \end{subfigure}
    ~
    \begin{subfigure}[h!]{0.45\textwidth}
      \centering
      \includegraphics[width=\textwidth]{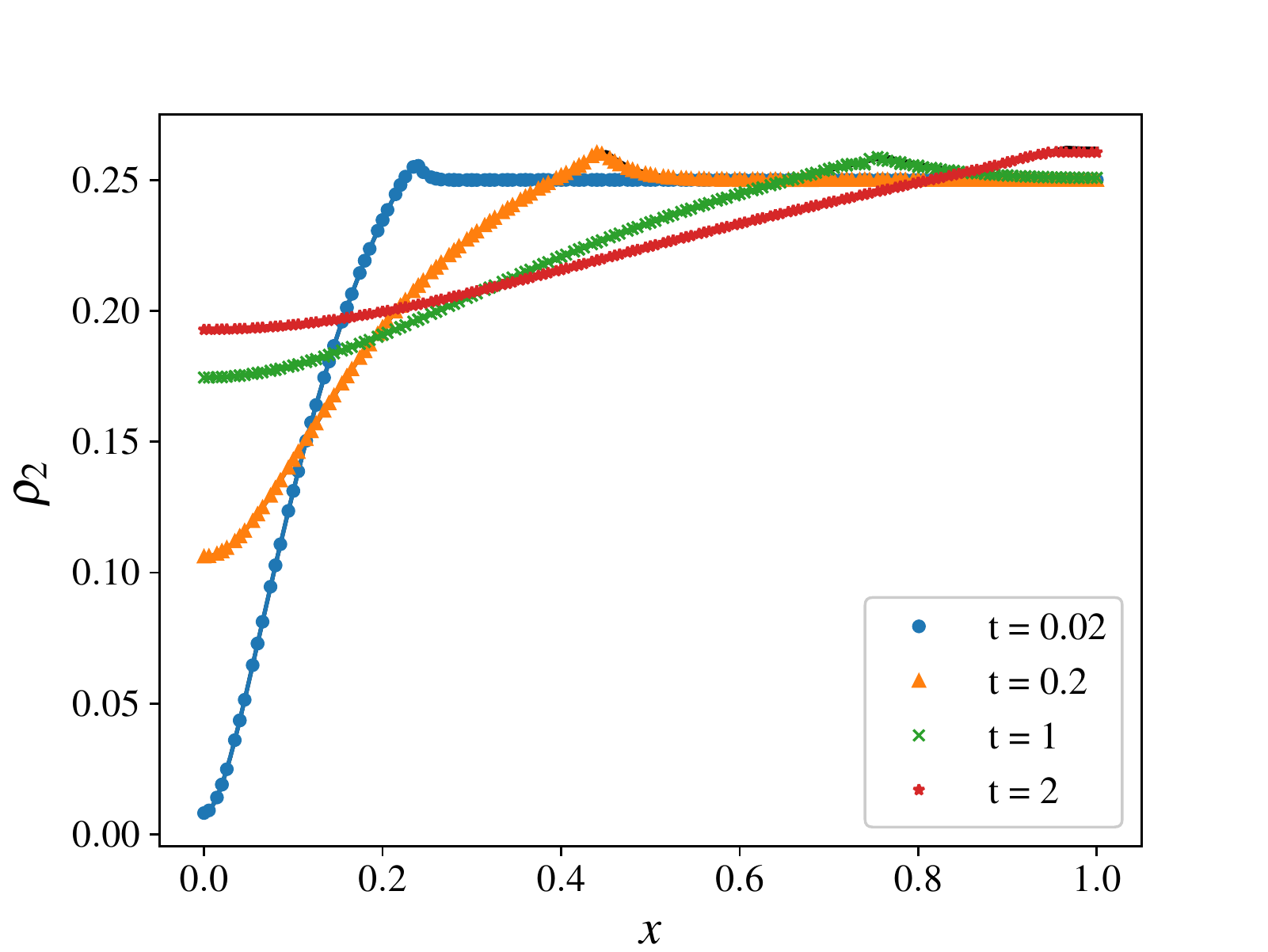}
      \caption{$\rho_2$, $h = 0.02$.}\label{subfig-tumorG-b2}
    \end{subfigure}
    \begin{subfigure}[h!]{0.45\textwidth}
      \centering
      \includegraphics[width=\textwidth]{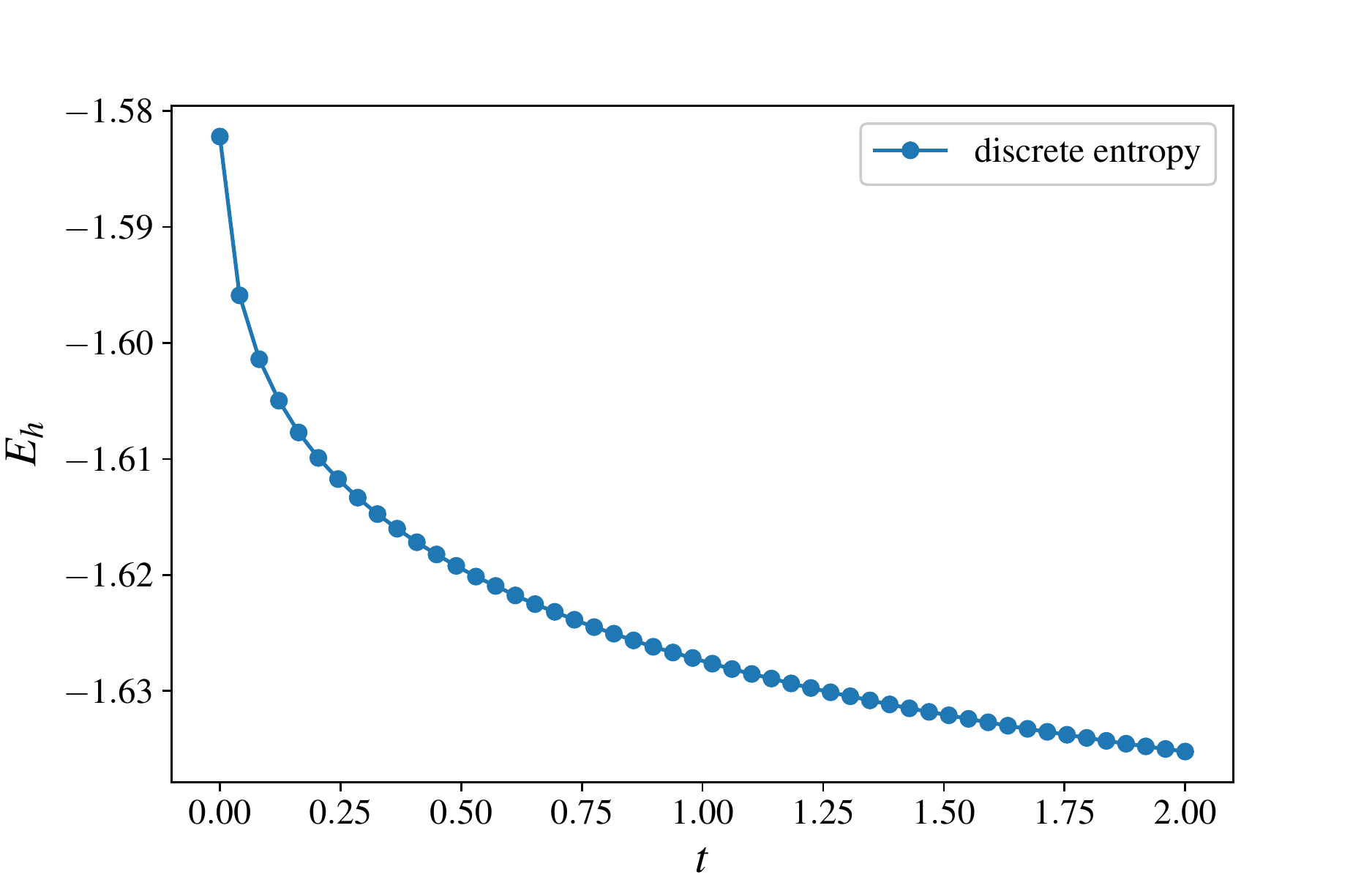}
      \caption{$E_h$, $h = 0.04$.}\label{subfig-tumorG-c1}
    \end{subfigure}
    ~
    \begin{subfigure}[h!]{0.45\textwidth}
      \centering
      \includegraphics[width=\textwidth]{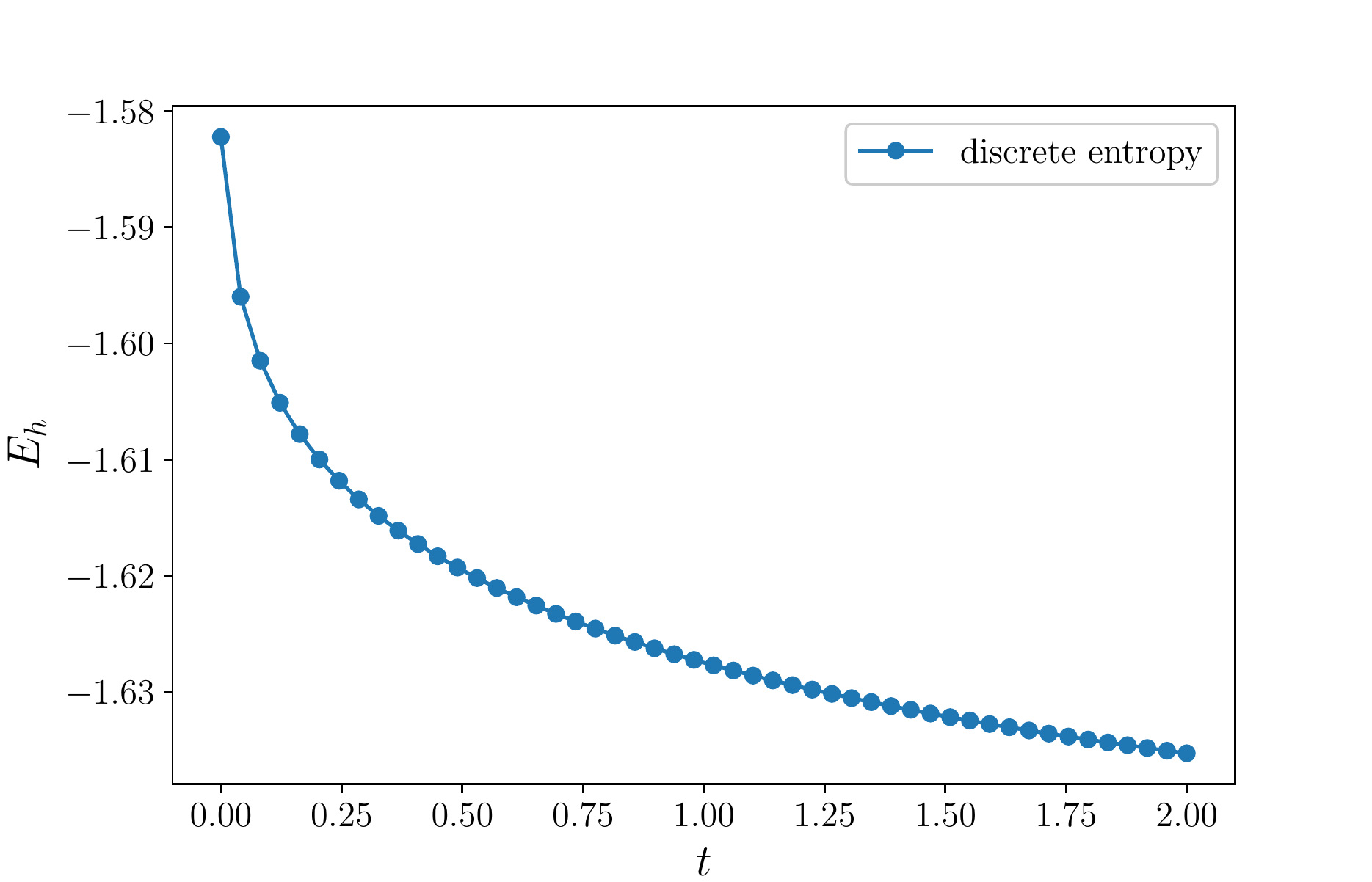}
      \caption{$E_h$, $h = 0.02$.}\label{subfig-tumorG-c2}
    \end{subfigure}
    \caption{Numerical solutions to the tumor encapsulation problem
      in Example \ref{examp:tumor} with $\beta = 0.0075$ and $\gamma = 10$
      at $t = 0.02$, $t = 0.2$, $t = 1$ and $t = 2$. We use piecewise cubic
      polynomials in the scheme. The mesh size is $h = 0.04$ in Figure 
      \ref{subfig-tumorG-a1} and Figure \ref{subfig-tumorG-a2}, and is
      $h = 0.02$ in Figure \ref{subfig-tumorG-b1} and Figure
      \ref{subfig-tumorG-b2}. The corresponding entropy profiles are given in 
      Figure \ref{subfig-tumorG-c1} and Figure\ref{subfig-tumorG-c2}.
The reference solutions are given in black lines obtained by using the
$P^1$ scheme on a mesh with $h = 0.002$.}\label{fig-tumorG}
  \end{figure}

  \begin{figure}
    \centering
    \begin{subfigure}[h!]{0.45\textwidth}
      \centering
      \includegraphics[width=\textwidth]{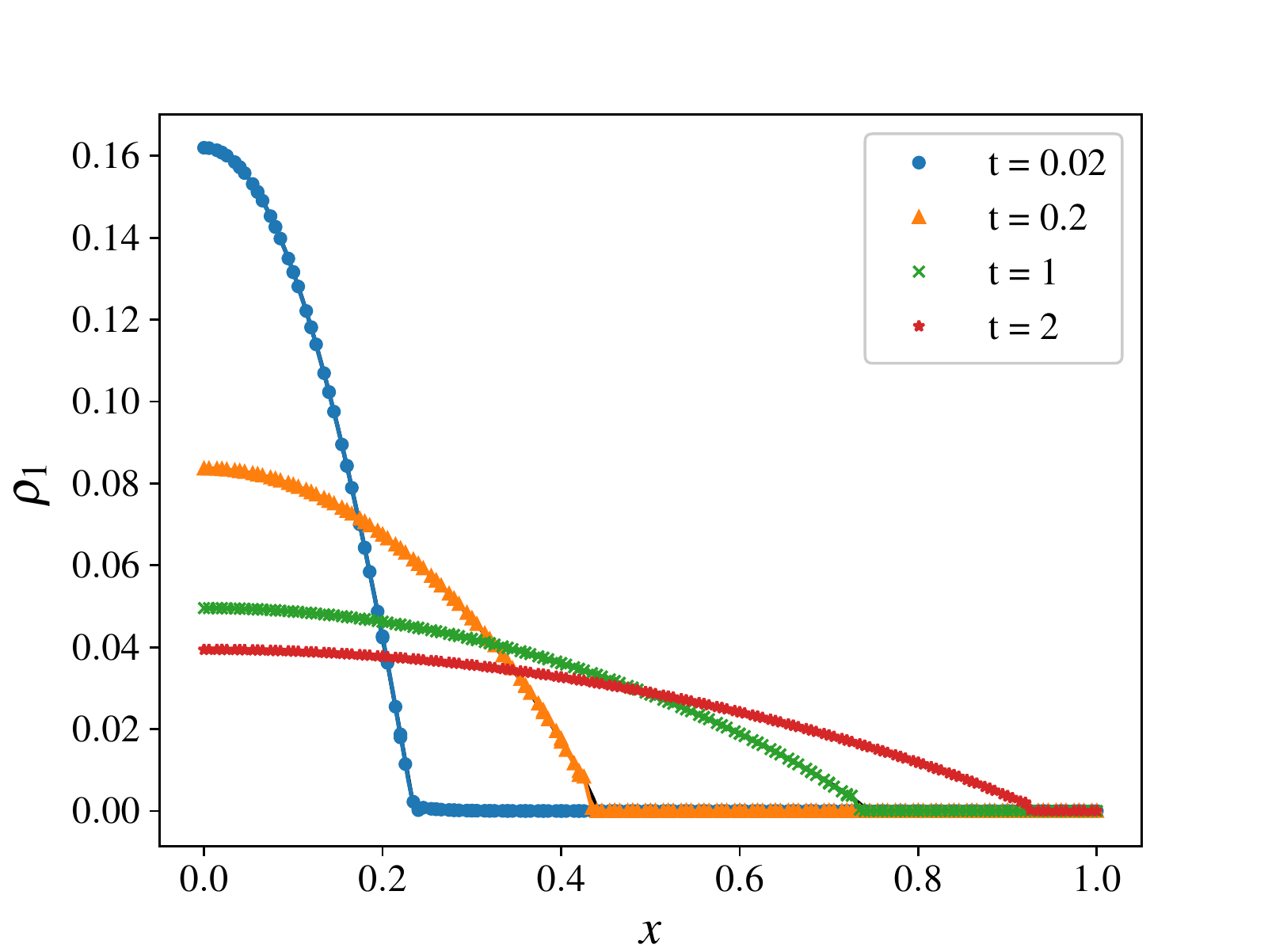}
      \caption{$\rho_1$, $k = 3$.}\label{subfig-tumor-a1}
    \end{subfigure}
    ~
    \begin{subfigure}[h!]{0.45\textwidth}
      \centering
      \includegraphics[width=\textwidth]{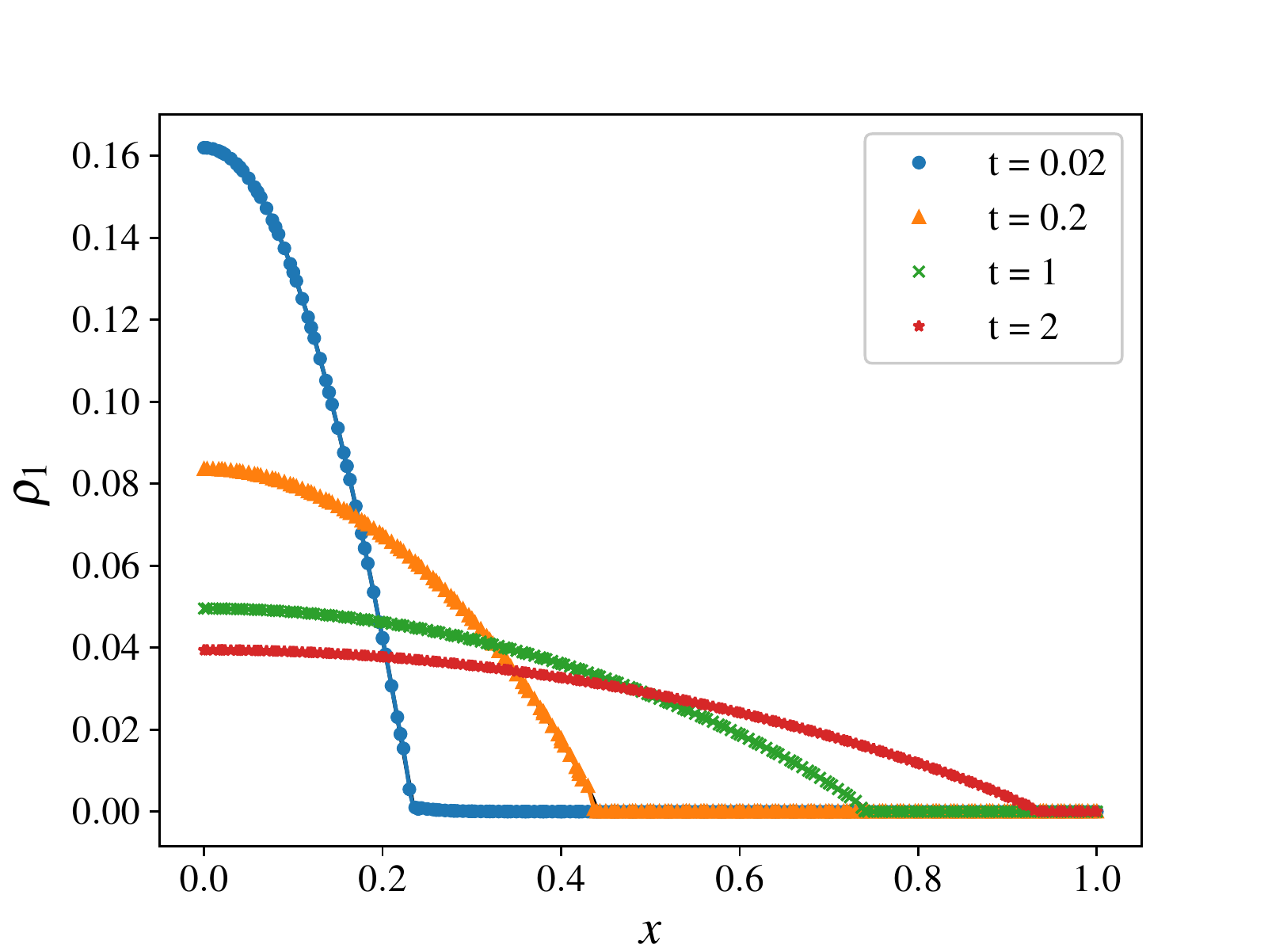}
      \caption{$\rho_1$, $k = 4$.}\label{subfig-tumor-b1}
    \end{subfigure}
    ~
    \begin{subfigure}[h!]{0.45\textwidth}
      \centering
      \includegraphics[width=\textwidth]{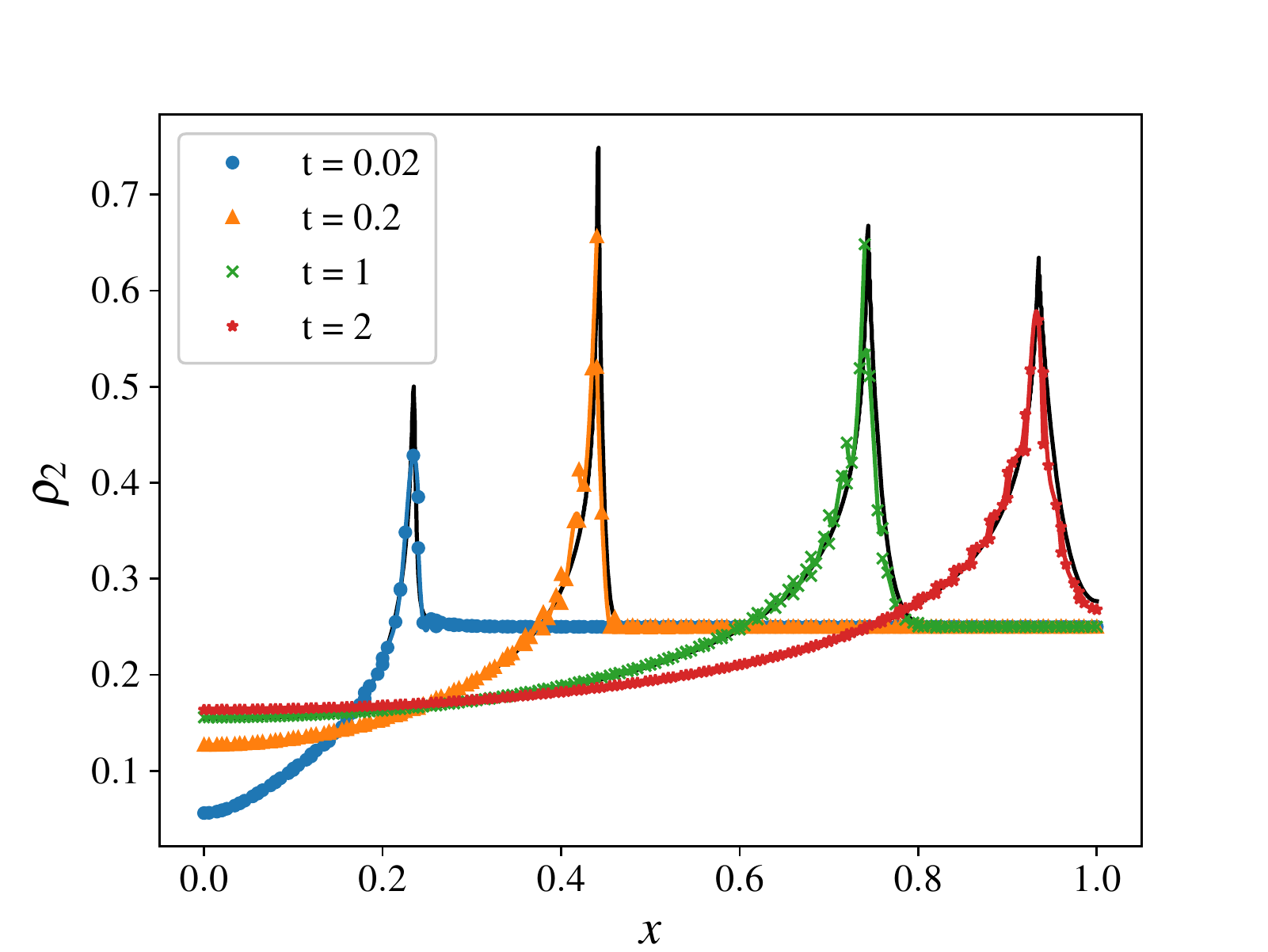}
      \caption{$\rho_2$, $k = 3$.}\label{subfig-tumor-a2}
    \end{subfigure}
    ~
    \begin{subfigure}[h!]{0.45\textwidth}
      \centering
      \includegraphics[width=\textwidth]{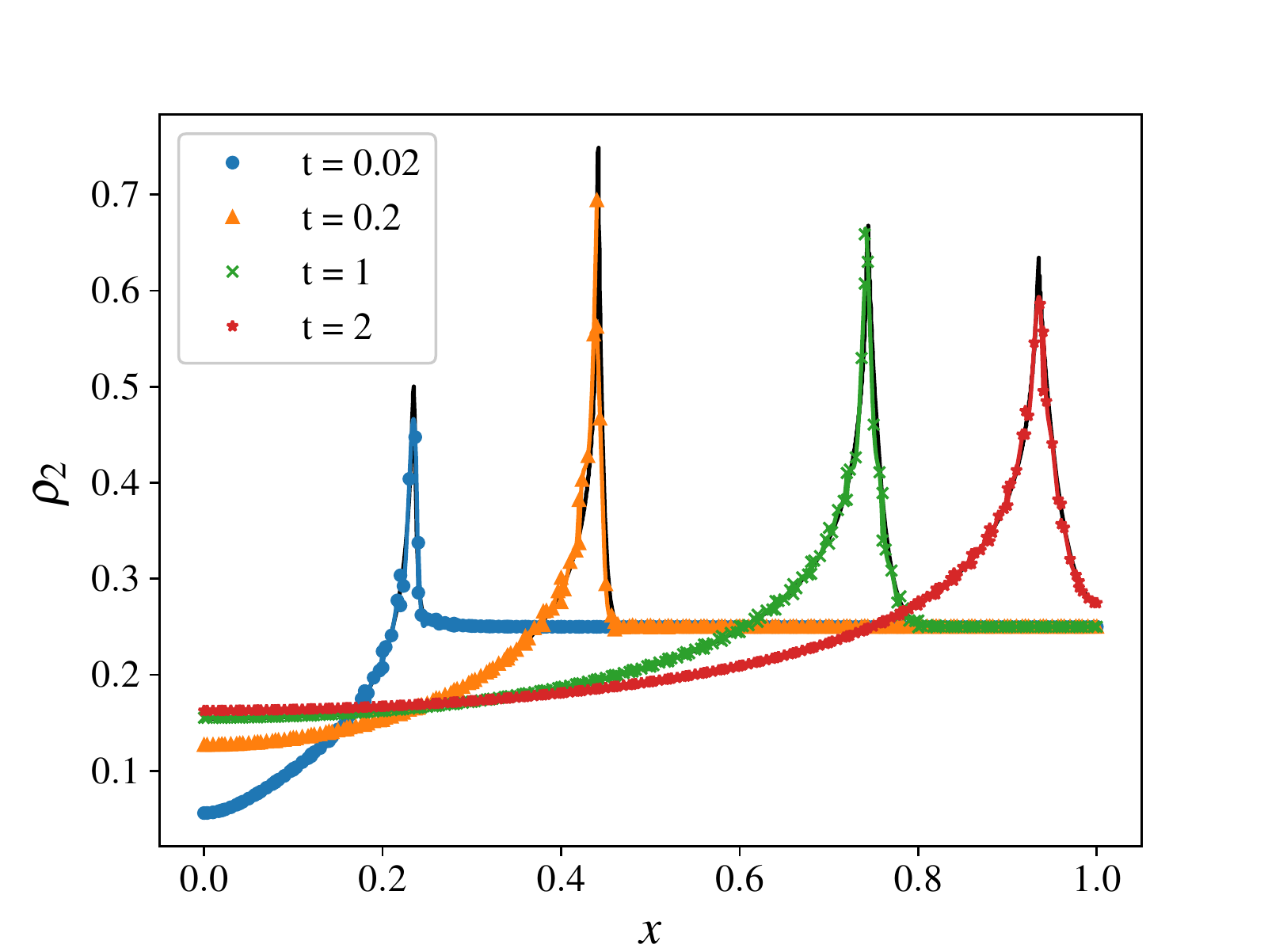}
      \caption{$\rho_2$, $k = 4$.}\label{subfig-tumor-b2}
    \end{subfigure}
    \caption{Numerical solutions to the tumor encapsulation problem
      in Example \ref{examp:tumor} with $\beta = 0.0075$ 
      and $\gamma = 1000$ at $t = 0.02$, $t = 0.2$, $t =1$ and $t = 2$. 
      The mesh size is $h = 0.02$ and the time step is $\tau = 0.02h^2$. The
      scaling parameter is set as $0.95\theta^\veps_{l,i}$ in the
      positivity-preserving procedure. 
      Piecewise cubic polynomials are used in Figure \ref{subfig-tumor-a1} and
      Figure \ref{subfig-tumor-a2}, and piecewise quartic polynomials are used
      in Figure \ref{subfig-tumor-b1} and Figure \ref{subfig-tumor-b2}. 
      The reference solution is given in black lines obtained by using the
    $P^1$ scheme on a mesh with $h = 0.001$.}\label{fig-tumor}
  \end{figure}

\end{Examp}
\begin{Examp}[Surfactant spreading]\label{examp:surf}
  We perform numerical simulations of a system
  modelling the surfactant spreading on a thin viscous film, which can be used
  for analyzing the delivery of aerosol for curing the respiratory distress
  syndrome. The system was
  derived by Jensen and Grotberg in \cite{jensen1992insoluble} and was then
  analyzed by Escher et al. 
  in \cite{escher2011global}. In this system, $\rho_1$ represents the film thickness and
  $\rho_2$ is the concentration of the surfactant. The parameter $g$ corresponds to a
  gravitational force.
  \begin{subequations}\label{eq-1dnum-surf}
 \begin{empheq}[left=\empheqlbrace]{align}
      \partial_t \rho_1 &=  \partial_x \left(\frac{g}{3}\rho_1^3\partial_x \rho_1
        +\frac{1}{2}\rho_1^2 \partial_x
      \rho_2\right),\\
      \partial_t \rho_2 &= \partial_x\left(\frac{g}{2}\rho_1^2\rho_2\partial_x\rho_1
      +\rho_1\rho_2 \partial_x \rho_2\right).
    \end{empheq}
 \end{subequations}
  Following the derivation in \cite{escher2011global}, the system is associated with the following entropy functional
\[E = \int_\Omega \frac{g}{2}\rho_1^2 + \rho_2(\log \rho_2 -1) dx.\]
Accordingly, $\bxi = (g\rho_1,\log \rho_2)^T$, $F =
\diag(\brho)\left(\begin{matrix}\frac{1}{3}\rho_1^2&\frac{1}{2}\rho_1\rho_2\\
\frac{1}{2}\rho_1^2&\rho_1\rho_2\end{matrix}\right)$ and $\bz \cdot F\bz =
\frac{1}{12}\rho_1^3z_1^2+\rho_1(\frac{1}{2}\rho_1z_1 + \rho_2z_2)^2\geq 0$.
It is shown in \cite{jensen1992insoluble} that the similarity solutions to
\eqref{eq-1dnum-surf}
would develop shocks as $g = 0$. 
In this numerical test, we consider a weak gravitational effect with $g = 0.02$.
A uniform mesh with $h = 0.05$ is used on the spatial domain $[0,3]$. The time
step is set as $\tau = 0.02 h^2$. We apply the zero-flux boundary condition and assume
the initial condition to be $\rho_1 = 0.5$ and $\rho_2 =
0.5(1-\tanh\frac{x-0.5}{0.1})$. Numerical solutions obtained with $k = 3$ and $k =
4$ are given in Figure \ref{fig-surf}. Under the same setting without applying
positivity-preserving limiter, the solutions blow up shortly after $t = 0.1718$
for $k = 3$ and $t = 0.1691$ for $k = 4$.

  \begin{figure}
    \centering
    \begin{subfigure}[h!]{0.45\textwidth}
      \centering
      \includegraphics[width=\textwidth]{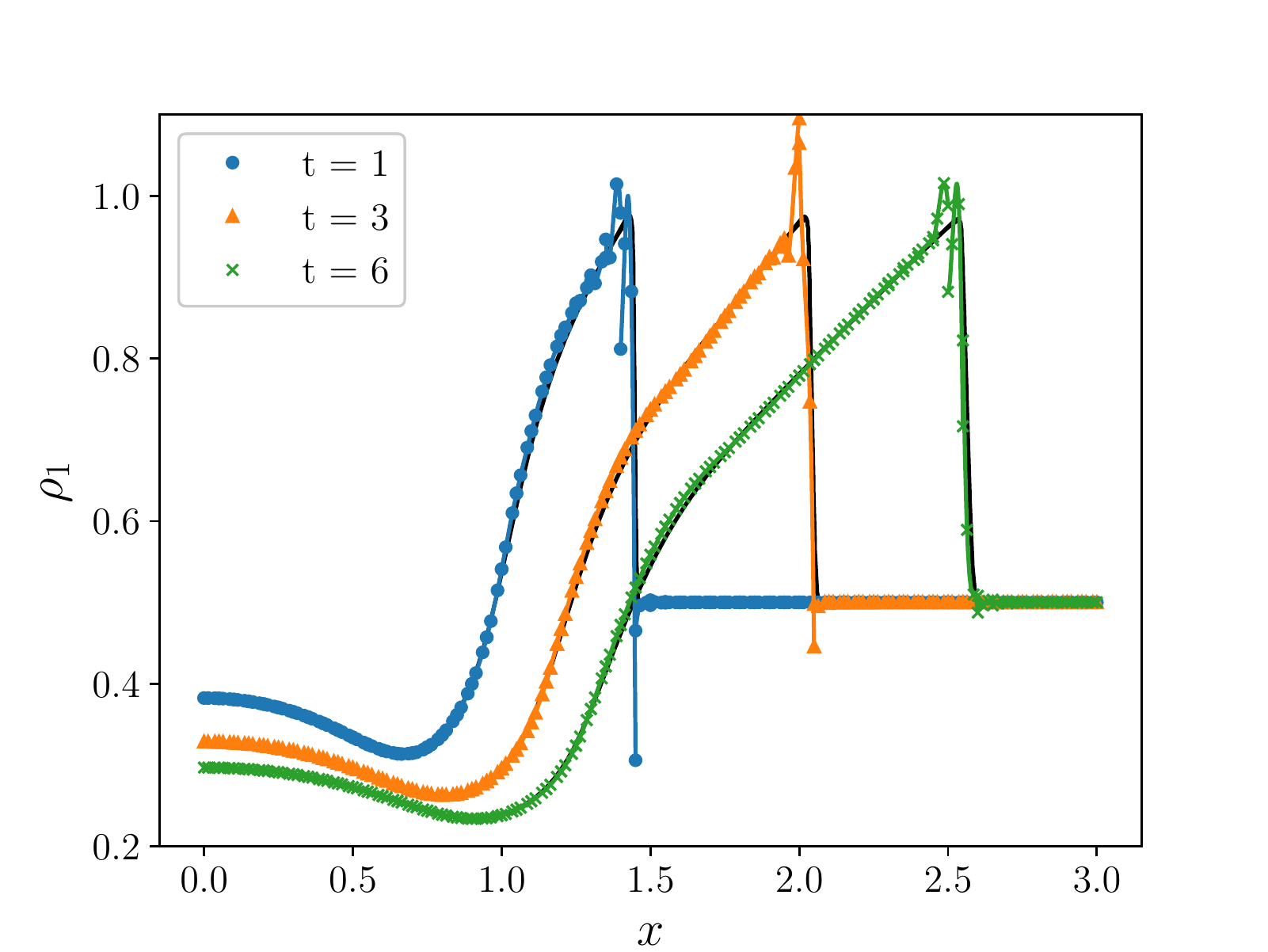}
      \caption{$\rho_1$, $k = 3$.}\label{subfig-surf-a1}
    \end{subfigure}
    ~
    \begin{subfigure}[h!]{0.45\textwidth}
      \centering
      \includegraphics[width=\textwidth]{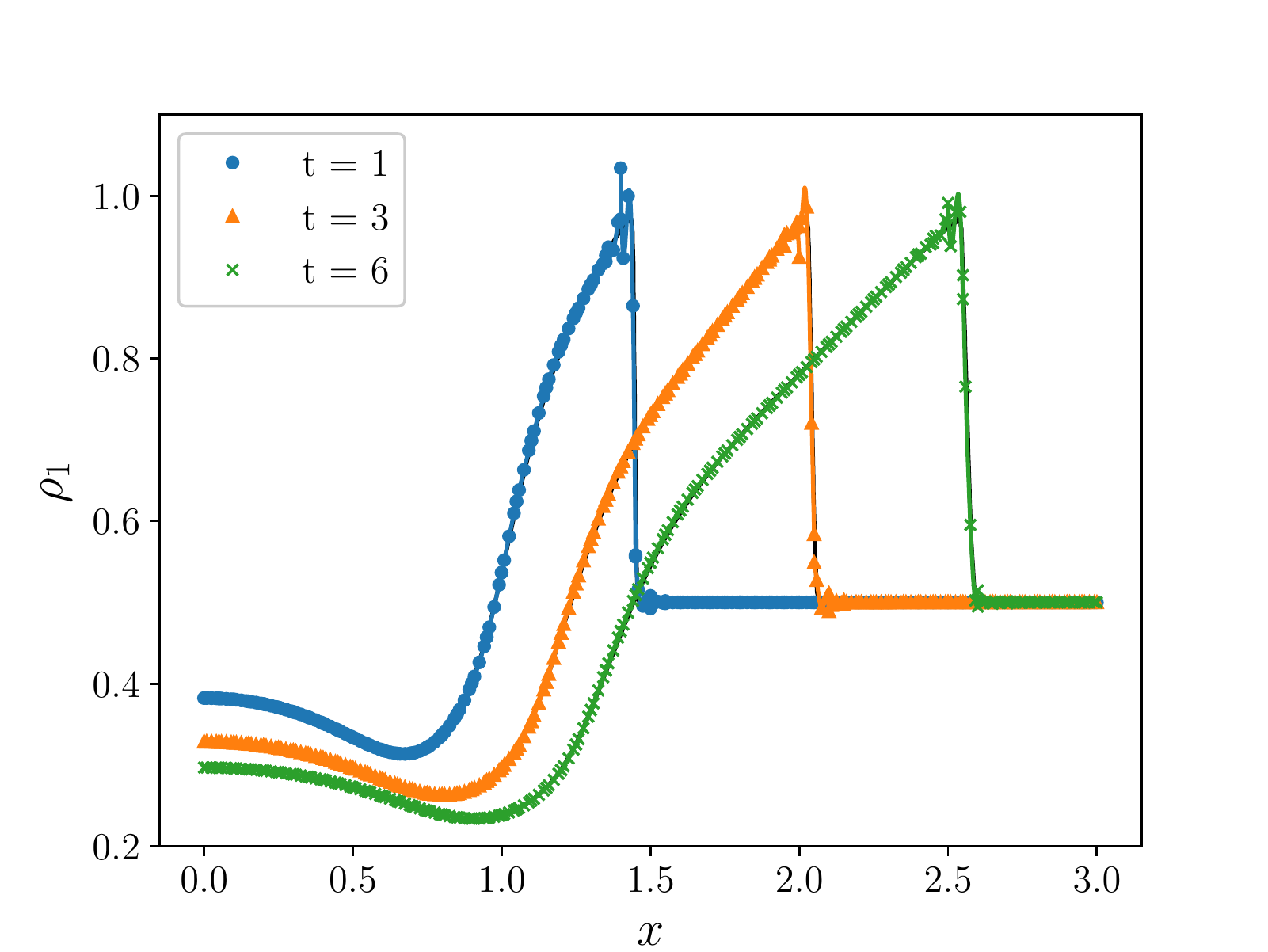}
      \caption{$\rho_1$, $k = 4$.}\label{subfig-surf-b1}
    \end{subfigure}
    ~
    \begin{subfigure}[h!]{0.45\textwidth}
      \centering
      \includegraphics[width=\textwidth]{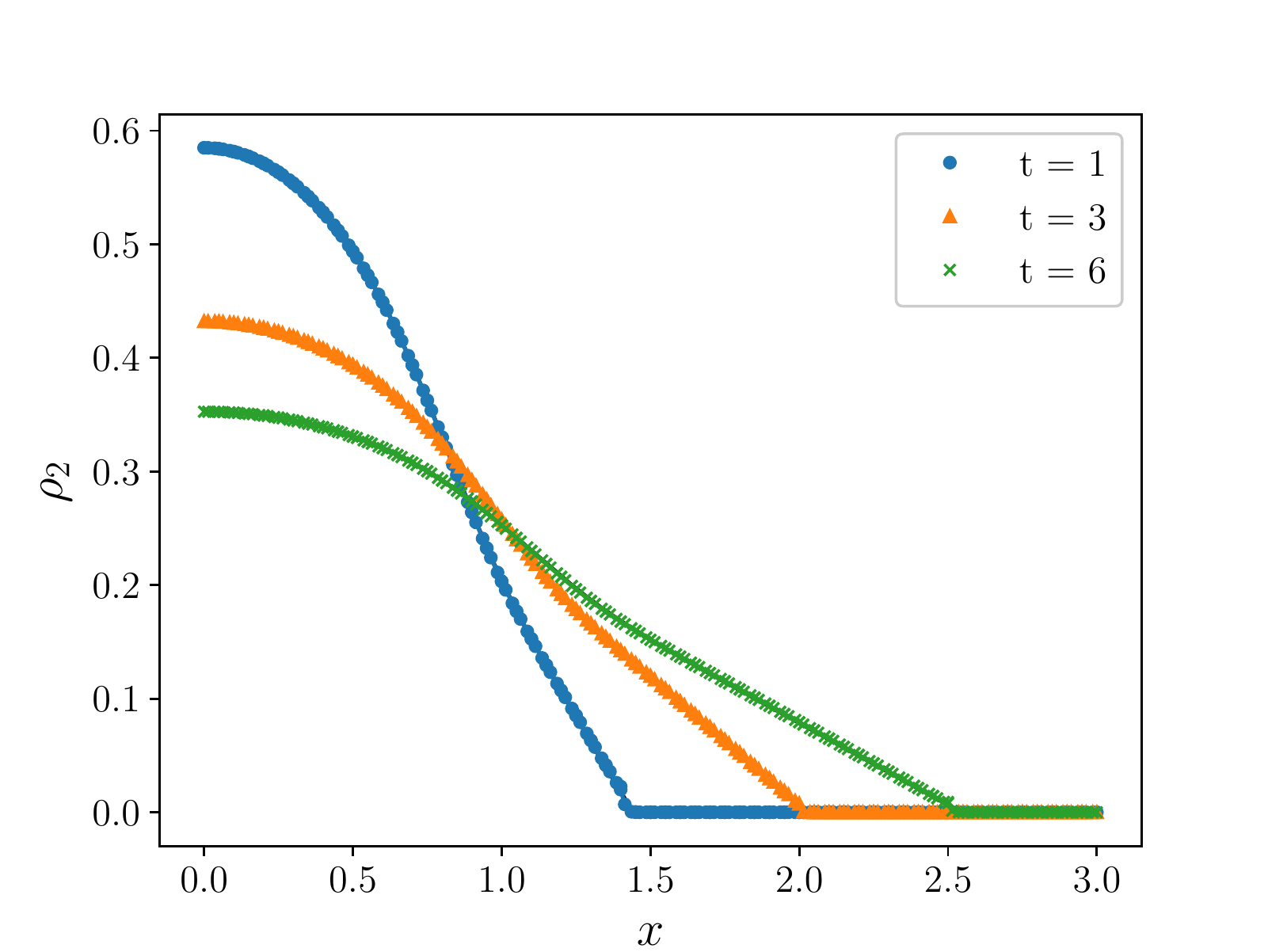}
      \caption{$\rho_2$, $k = 3$.}\label{subfig-surf-a2}
    \end{subfigure}
    ~
    \begin{subfigure}[h!]{0.45\textwidth}
      \centering
      \includegraphics[width=\textwidth]{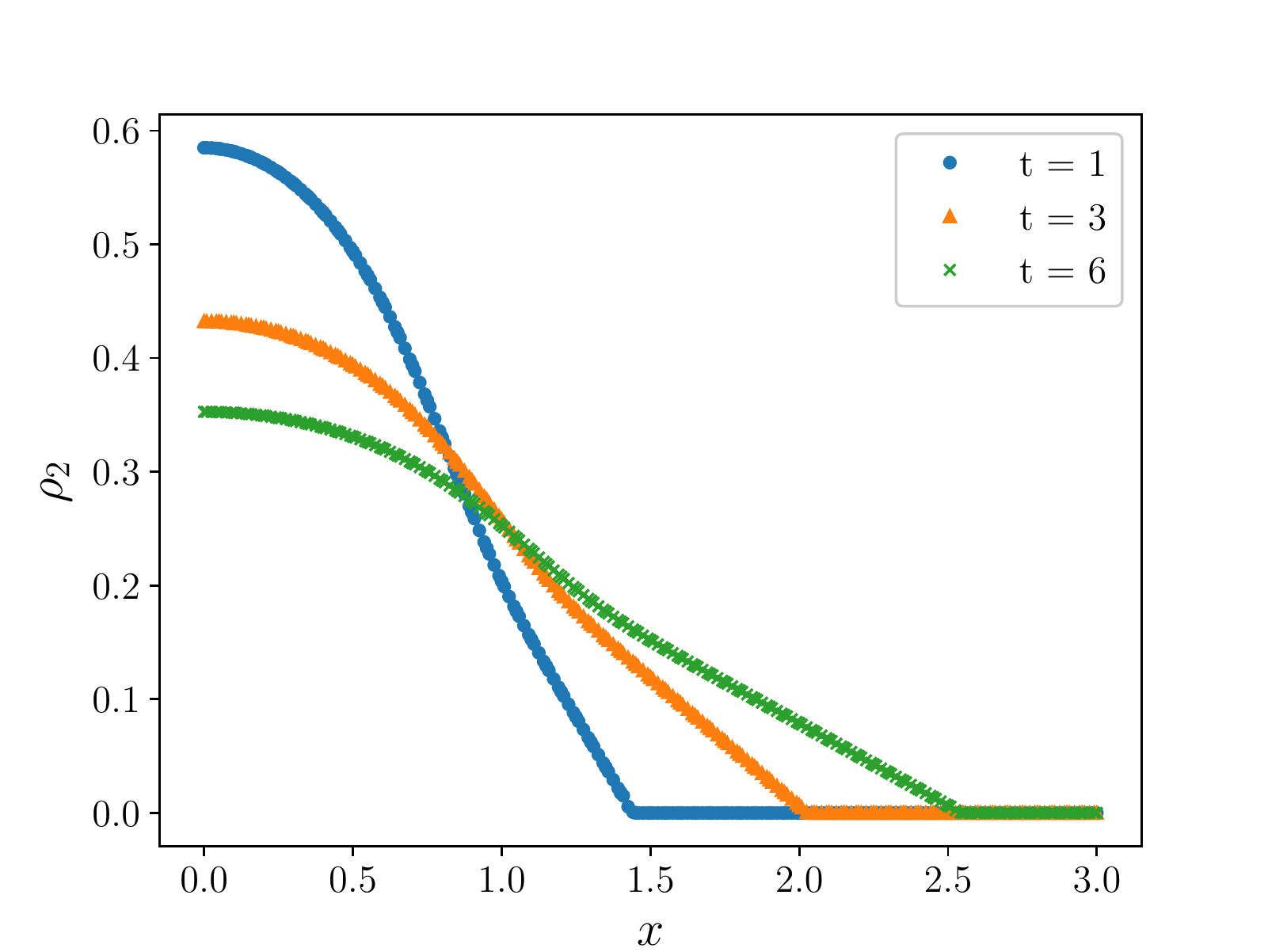}
      \caption{$\rho_2$, $k = 4$.}\label{subfig-surf-b2}
    \end{subfigure}
    ~
    \begin{subfigure}[h!]{0.45\textwidth}
      \centering
      \includegraphics[width=\textwidth]{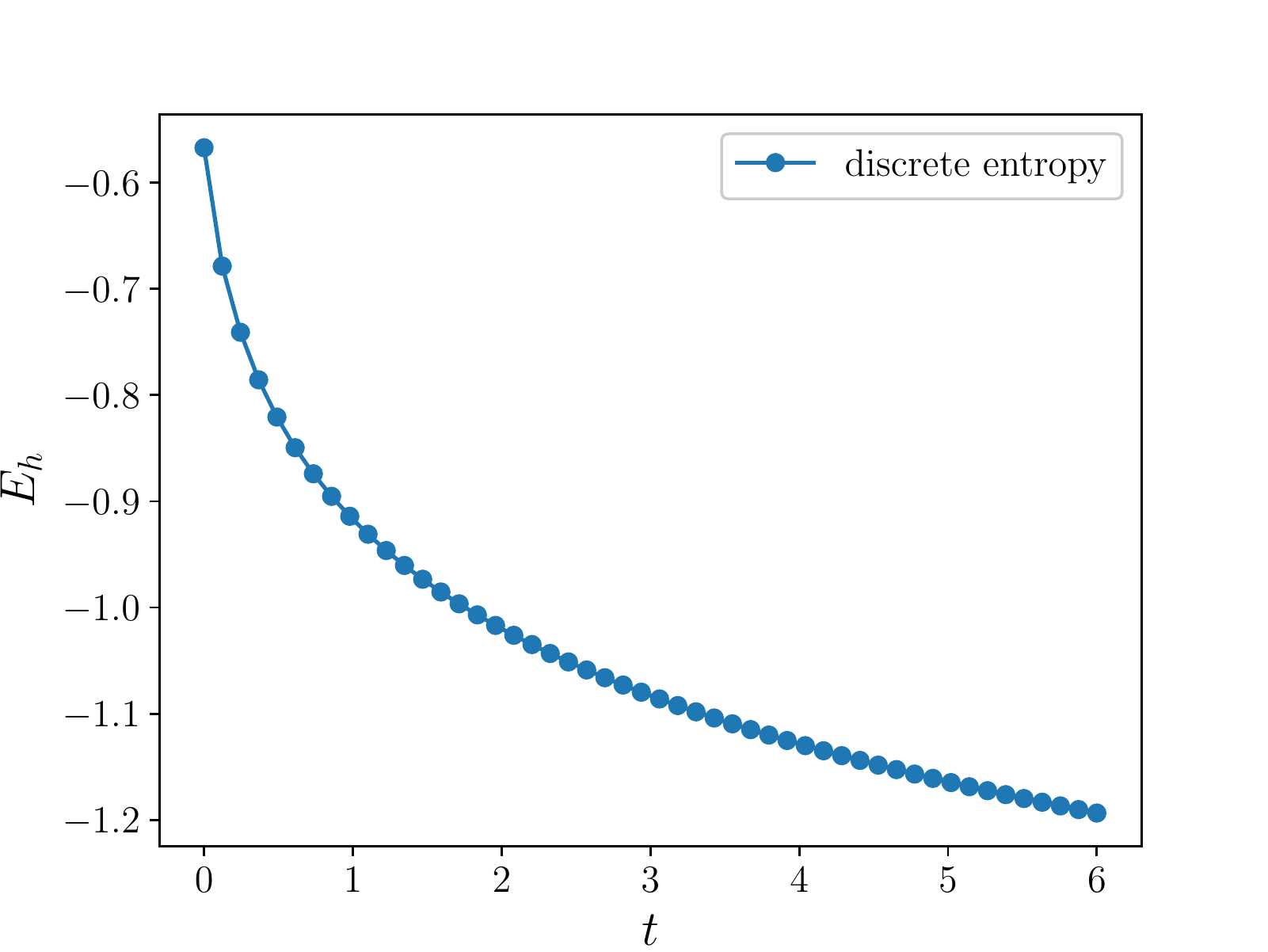}
      \caption{$E_h$, $k = 3$.}\label{subfig-surf-a3}
    \end{subfigure}
    ~
    \begin{subfigure}[h!]{0.45\textwidth}
      \centering
      \includegraphics[width=\textwidth]{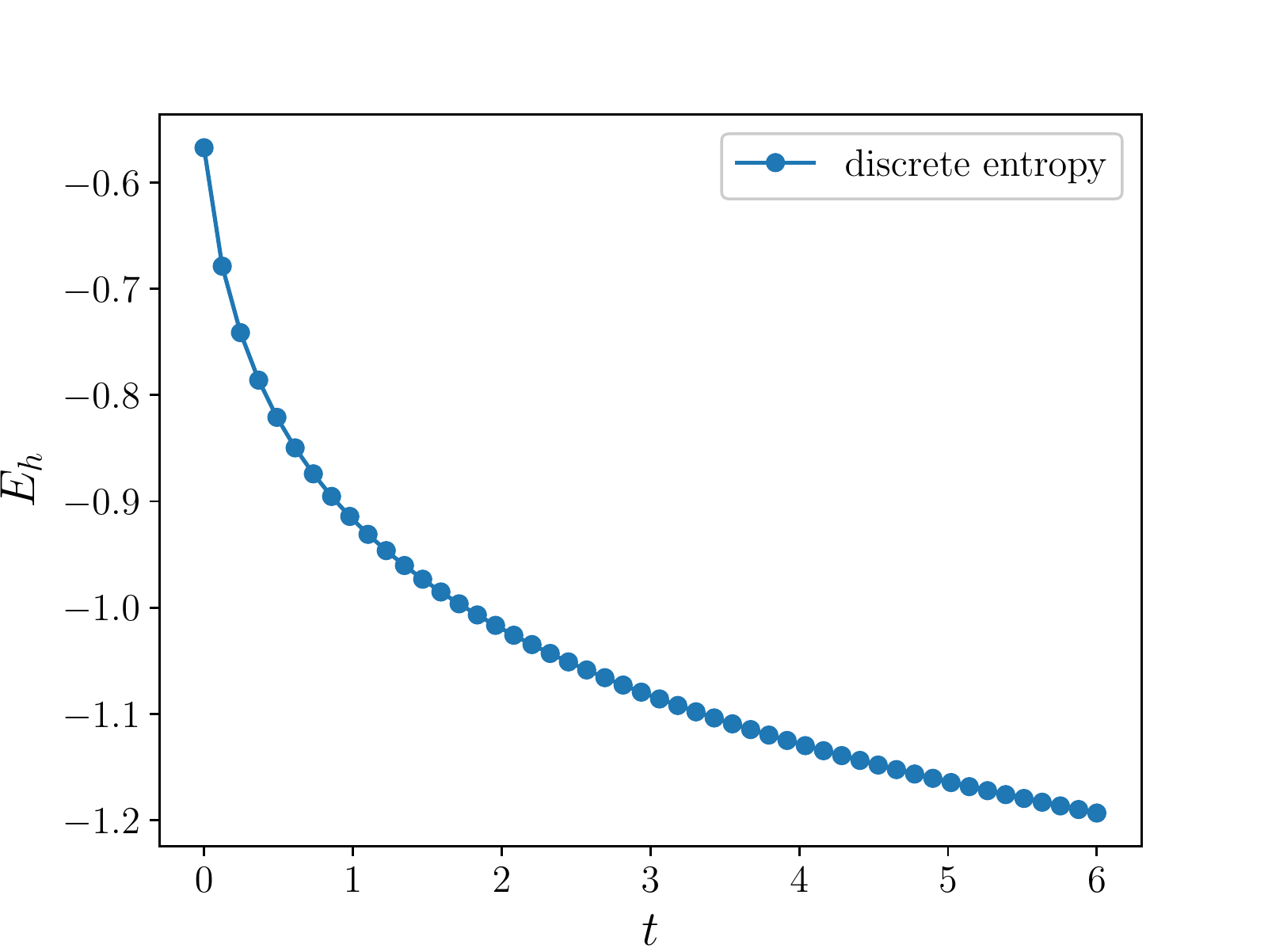}
      \caption{$E_h$, $k = 4$.}\label{subfig-surf-b3}
    \end{subfigure}
    \caption{Numerical solutions to the surfactant spreading problem in Example
    \ref{examp:surf} at $t = 1$, $t = 3$ and $t = 6$. 
    The mesh size is $h = 0.05$ and the time step is $\tau =
  0.02h^2$. We apply piecewise cubic polynomials for producing Figure \ref{subfig-surf-a1} and Figure \ref{subfig-surf-a2}. Piecewise quartic polynomials are used in
Figure \ref{subfig-surf-b1} and Figure \ref{subfig-surf-b2}.
Positivity-preserving limiter is activated mainly near the leading front of
$\rho_2$. The reference solutions are given in black lines
obtained with $P^1$ scheme on a mesh with $h = 0.0025$.
The profiles of discrete entropy are depicted in Figure
\ref{subfig-surf-a3} and
Figure \ref{subfig-surf-b3}. 
}\label{fig-surf}
\end{figure}
\end{Examp}
\section{Two-dimensional numerical tests}\label{sec-tests2}
\setcounter{equation}{0}
\setcounter{figure}{0}
\setcounter{table}{0}
In this section, we provide several numerical examples of two-dimensional problems on Cartesian
meshes. The accuracy test is given in Example \ref{examp:2dskt}. Then we 
apply the numerical scheme to solve the two-dimensional surfactant spreading
problem in Example \ref{examp:2dsurf} and the seawater intrusion model in
Example \ref{examp:seawater}. 
\begin{Examp}\label{examp:2dskt}
  We start with the accuracy test and consider the two-dimensional version of Example \ref{examp:skt}
  with a source term $\bs = (s_1,s_2)^T$. 
  \begin{equation*}\left\{
  \begin{aligned}
    \partial_t \rho_1 =\nabla\cdot \left( (2\rho_1 + \rho_2)\nabla \rho_1 +
    \rho_1\nabla\rho_2\right) + s_1,\\
    \partial_t \rho_2 = \nabla\cdot \left(\rho_2 \nabla \rho_1 +
    (\rho_1+2\rho_2)\nabla\rho_2\right)+s_2.
\end{aligned}\right.
  \end{equation*}
  The problem is again associated with the logarithm entropy.
  \[E = \iint_\Omega \rho_1(\log \rho_1 -1) + \rho_2(\log \rho_2 -1) dxdy.\]
  Then $\bxi = (\log \rho_1, \log \rho_2)^T$ and $F = \diag(\brho)\left(
  \begin{matrix}2\rho_1+\rho_2&\rho_2\\\rho_1&2\rho_2+\rho_1\end{matrix}\right)$.
  We assume the exact solution $\brho = (\rho_1,\rho_2)^T=
  (0.5\sin(\pi(x+y+t))+1,0.5\cos(\pi(x-y-0.5t)) + 1)^T$. The source term
  $\bs = (s_1,s_2)^T$
  is computed accordingly. We compute to $t = 0.03$. The time step is set as
  $\tau = 0.0003h^2$ for $k = 1,2,3$ and $\tau = 0.0001h^2$ for $k = 4$.  
\begin{table}[h!] 
  \centering 
  \begin{tabular}{c|c|c|c|c|c|c|c} 
    \hline   
    $k$&$N^x$&$L^1$ error& order&$L^2$ error& order&$L^\infty$ error& order\\ 
    \hline 
    1&10 & 1.185E-01&	    - &5.349E-02&		  - &5.732E-02&	    -\\ 
     &20 & 4.656E-02&	1.35	&2.147E-02&	1.32	&2.265E-02&	1.34\\
     &40 & 1.773E-02&	1.39	&8.266E-03&	1.38	&8.692E-03&	1.38\\
     &80&  6.218E-03&	1.51	&2.921E-03&	1.50	&3.084E-03&	1.50\\
    \hline
    2&10 & 8.206E-03&	-     &4.362E-03&	-     &8.402E-03&	-    \\
     &20 & 9.124E-04&	3.17	&5.659E-04&	2.95	&9.942E-04&	3.08\\
     &40 & 1.077E-04&	3.08	&7.186E-05&	2.98	&1.213E-04&	3.04\\
     &80&  1.315E-05&	3.03	&9.068E-06&	2.99	&1.515E-05&	3.00\\
    \hline 
    3&10 & 9.481E-04&	-     &5.256E-04&	-     &9.990E-04&	-    \\
     &20 & 1.061E-04&	3.16	&5.847E-05&	3.17	&1.112E-04&	3.17\\
     &40 & 1.128E-05&	3.23	&6.169E-06&	3.25	&1.216E-05&	3.19\\
     &80&  1.119E-06&	3.33	&6.042E-07&	3.35	&1.232E-06&	3.30\\
    \hline
    4&10 &4.685E-05&  -    & 2.649E-05& -    & 7.459E-05&  -\\
     &20 &1.212E-06&  5.27& 8.159E-07& 5.02& 2.837E-06&  4.72\\
     &40 &3.328E-08&  5.19& 2.273E-08& 5.17& 7.754E-08&  5.19\\
    \hline 
  \end{tabular} 
  \caption{Accuracy test of the cross-diffusion system in Example
    \ref{examp:2dskt}, with central flux for $\wbxi$ and 
  Lax-Friedrichs flux for $\wFbu$.} 
    \label{tab:2dskt_i}
\end{table} 
\begin{table}[h!] 
  \centering 
  \begin{tabular}{c|c|c|c|c|c|c|c} 
    \hline   
    $k$&$N^x$&$L^1$ error& order&$L^2$ error& order&$L^\infty$ error& order\\ 
    \hline 
    1&10 &3.848E-01&     -    & 1.853E-01&     -    & 2.415E-01&     -\\ 
     &20 &9.608E-02&     2.00& 4.397E-02&     2.08& 4.372E-02&     2.47\\ 
     &40 &2.376E-02&     2.02& 1.075E-02&     2.03& 1.014E-02&     2.11\\ 
     &80& 5.916E-03&     2.01& 2.667E-03&     2.01& 2.525E-03&     2.01\\ 
    \hline
    2&10 &1.510E-02&     -    & 8.132E-03&     -    & 2.121E-02&     -\\ 
     &20 &1.673E-03&     3.17& 9.563E-04&     3.09& 2.328E-03&     3.19\\ 
     &40 &1.923E-04&     3.12& 1.169E-04&     3.03& 2.662E-04&     3.13\\ 
     &80& 2.294E-05&     3.07& 1.453E-05&     3.01& 3.192E-05&     3.06\\ 
    \hline 
    3&10 &8.253E-04&     -    & 4.525E-04&     -    & 1.449E-03&     -    \\
     &20 &4.998E-05&     4.05& 2.922E-05&     3.95& 1.044E-04&     3.80\\
     &40 &3.075E-06&     4.02& 1.850E-06&     3.98& 6.675E-06&     3.97\\
     &80 &1.912E-07&     4.01& 1.161E-07&     4.00& 4.216E-07&     3.99\\
    \hline
    4&10 &5.174E-05&     -     & 3.287E-05&     -     & 1.448E-04&     -\\
     &20 &1.683E-06&     4.94& 1.127E-06&     4.87& 4.972E-06&     4.86\\
     &40 &5.276E-08&     5.00& 3.620E-08&     4.96& 1.485E-07&     5.07\\
    \hline 
  \end{tabular} 
  \caption{Accuracy test of the cross-diffusion system in Example
    \ref{examp:2dskt} with alternating fluxes
  $\wbxi = \bxi_h^-$ and $\wFbu = (F_h \bu_{h})^+$.}
    \label{tab:2dskt_ib}
\end{table} 
\begin{table}[h!] 
  \centering 
  \begin{tabular}{c|c|c|c|c|c|c|c} 
    \hline   
    $\tilde{\alpha}$&$N^x$&$L^1$ error& order&$L^2$ error& order&$L^\infty$ error& order\\ 
    \hline 
    0&10 &1.082E-03&     -    &     6.170E-04&      -    &   1.139E-03&   -\\               
      &20& 1.328E-04&      3.07&    7.682E-05&      3.01&  1.373E-04&    3.05\\          
      &40& 1.651E-05&      3.01&    9.595E-06&      3.00&  1.722E-05&    3.00\\          
      &80& 2.063E-06&      3.00&    1.199E-06&      3.00&  2.157E-06&    3.00\\
    \hline 
$100\alpha$&10&2.505E-04&  -    &    1.266E-04&      -& 1.528E-04&      -   \\
           &20&1.234E-05&  4.34&    6.425E-06&      4.30&  1.234E-05&      3.63\\
           &40&6.709E-07&  4.20&    3.609E-07&      4.15&  9.094E-07&      3.76\\ 
           &80&3.923E-08&  4.10&    2.178E-08&      4.05&  6.480E-08&      3.81\\ 
    \hline 
  \end{tabular} 
  \caption{Accuracy test for Example \ref{examp:2dskt}, with central flux for
    $\wbxi$ and Lax-Friedrichs flux for $\wFbu = \{F_h\bu_h\}
    + \frac{\tilde{\alpha}}{2}[\brho_h]$. Here $\tilde{\alpha} =
0,100\alpha$.}\label{tab:2dskt_iii}
\end{table} 
\end{Examp}
\begin{Examp}\label{examp:2dsurf}
  In this numerical example, we simulate the two-dimensional surfactant
  spreading.
  \begin{subequations}\label{eq-2dnum-surf}
   \begin{empheq}[left=\empheqlbrace]{align}
      \partial_t \rho_1 &=  \nabla\cdot \left(\frac{g}{3}\rho_1^3\nabla \rho_1
        +\frac{1}{2}\rho_1^2 \nabla
      \rho_2\right),\\
      \partial_t \rho_2 &=
      \nabla\cdot\left(\frac{g}{2}\rho_1^2\rho_2\nabla\rho_1
      +\rho_1\rho_2 \nabla \rho_2\right).
    \end{empheq}
  \end{subequations}
    Again $\rho_1$ and $\rho_2$ correspond to the film thickness and surfactant
    concentration respectively. The associated entropy is 
    \[E = \iint_\Omega \frac{g}{2}\rho_1^2 + \rho_2(\log\rho_2-1) dxdy.\]
    As before, $\bxi = (g\rho_1, \log\rho_2)^T$ and $F(\brho) =
    \diag(\brho)\left(\begin{matrix} \frac{1}{3}\rho_1^2 &
      \frac{1}{2}\rho_1\rho_2\\\frac{1}{2}\rho_1^2 & \rho_1\rho_2
  \end{matrix}\right)$.
    We assume zero-flux boundary condition on $\Omega = [0,2]\times[0,2]$ and
    compute to $t = 0.25$ with $h^x=h^y = 0.02$ and $\tau
    = 0.003(h^x)^2$. The gravitational coefficient is set as $g = 0.001$. 
    Numerical results are given in Figure \ref{fig-2dsurf}.
    The numerical method does capture the sharp transition of the bowl-shaped
    leading front of $\rho_1$, though with oscillations.
    \begin{figure}[h!]
    \centering
    \begin{subfigure}[h!]{0.45\textwidth}
      \centering
      \includegraphics[width=\textwidth]{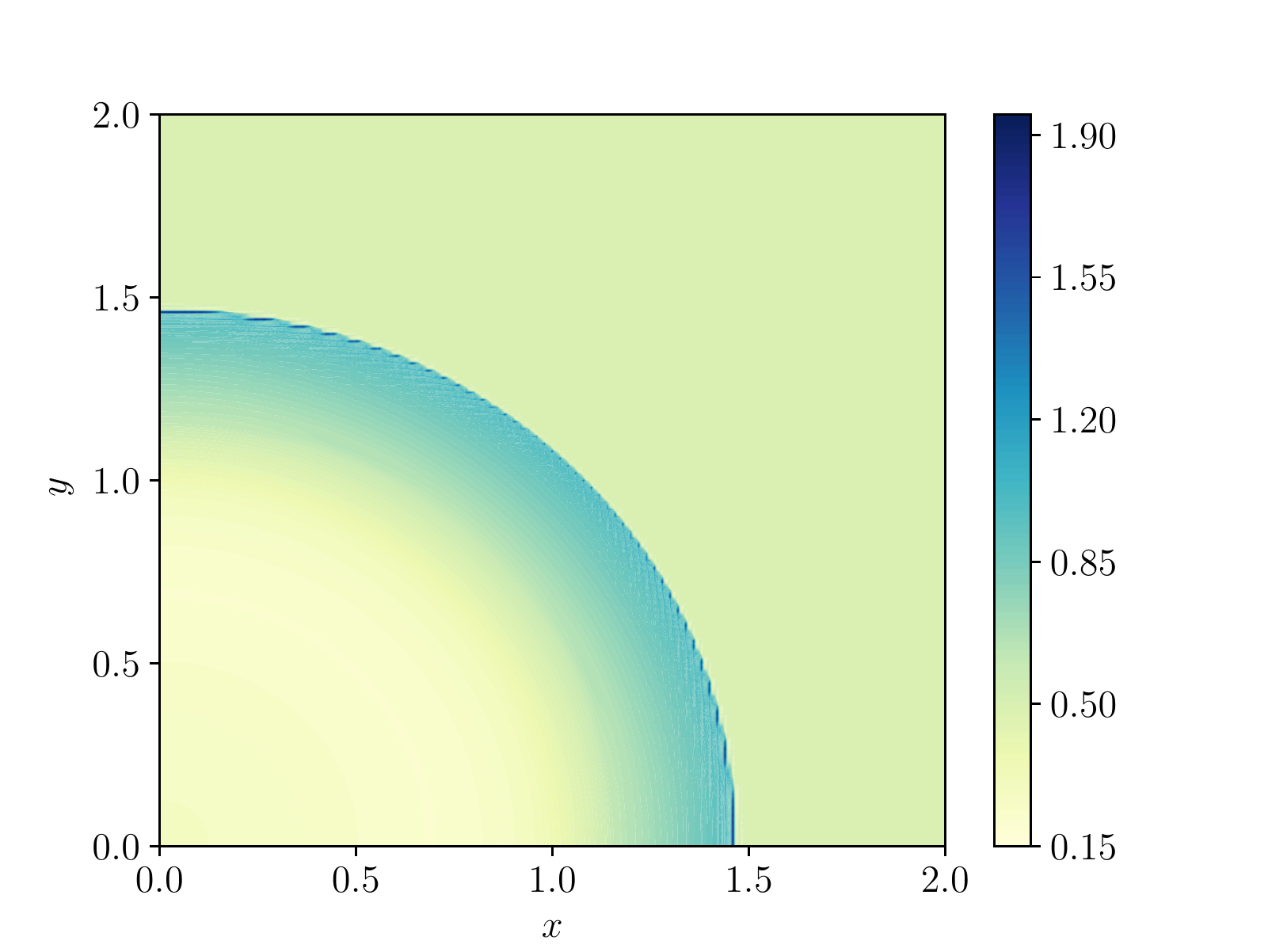}
      \caption{$\rho_1$, $k = 3$.}\label{subfig-surf2d-b1}
    \end{subfigure}
    ~
    \begin{subfigure}[h!]{0.45\textwidth}
      \centering
      \includegraphics[width=\textwidth]{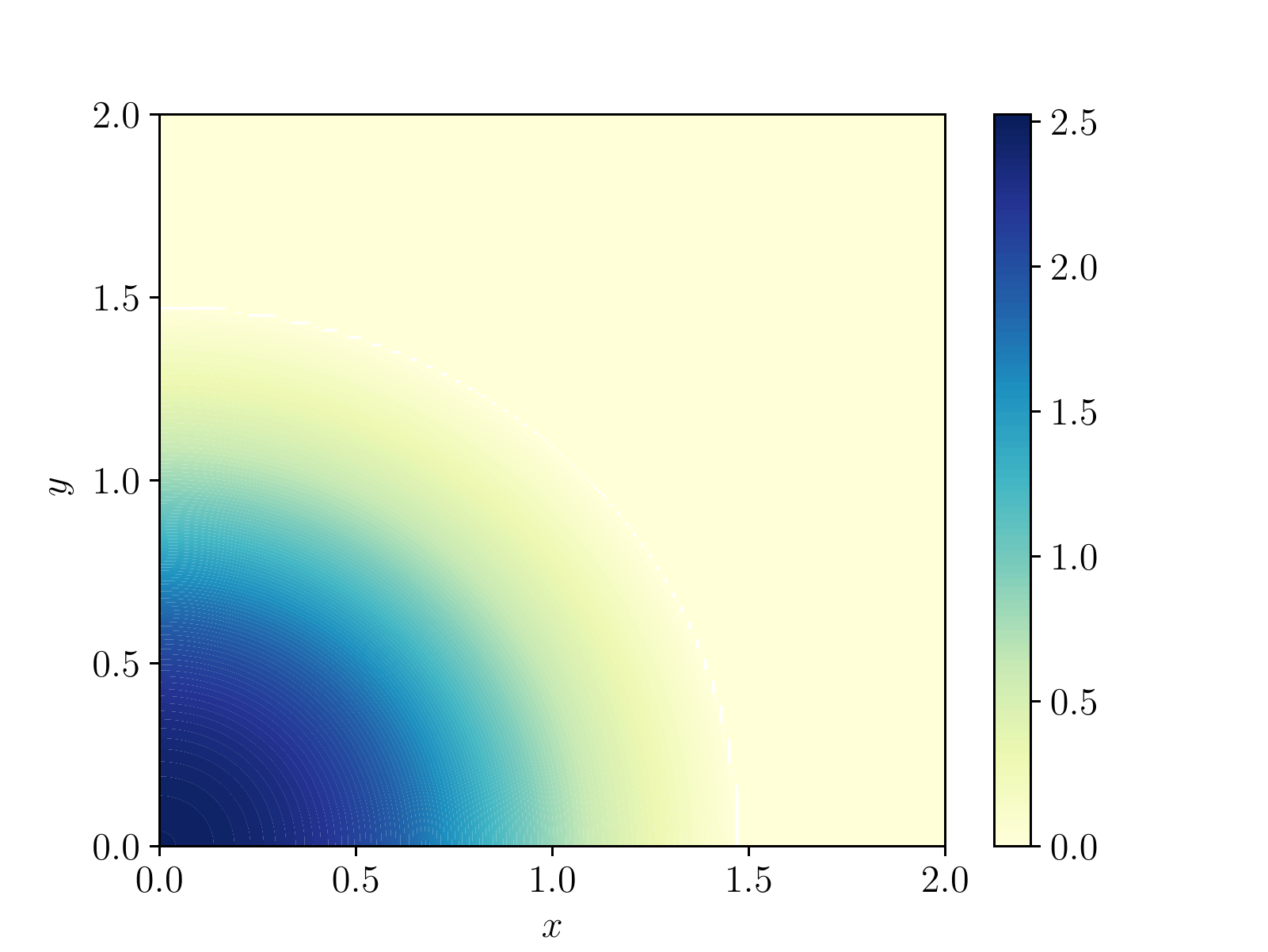}
      \caption{$\rho_2$, $k = 3$.}\label{subfig-surf2d-b2}
    \end{subfigure}
    ~
    \begin{subfigure}[h!]{0.45\textwidth}
      \centering
      \includegraphics[width=\textwidth]{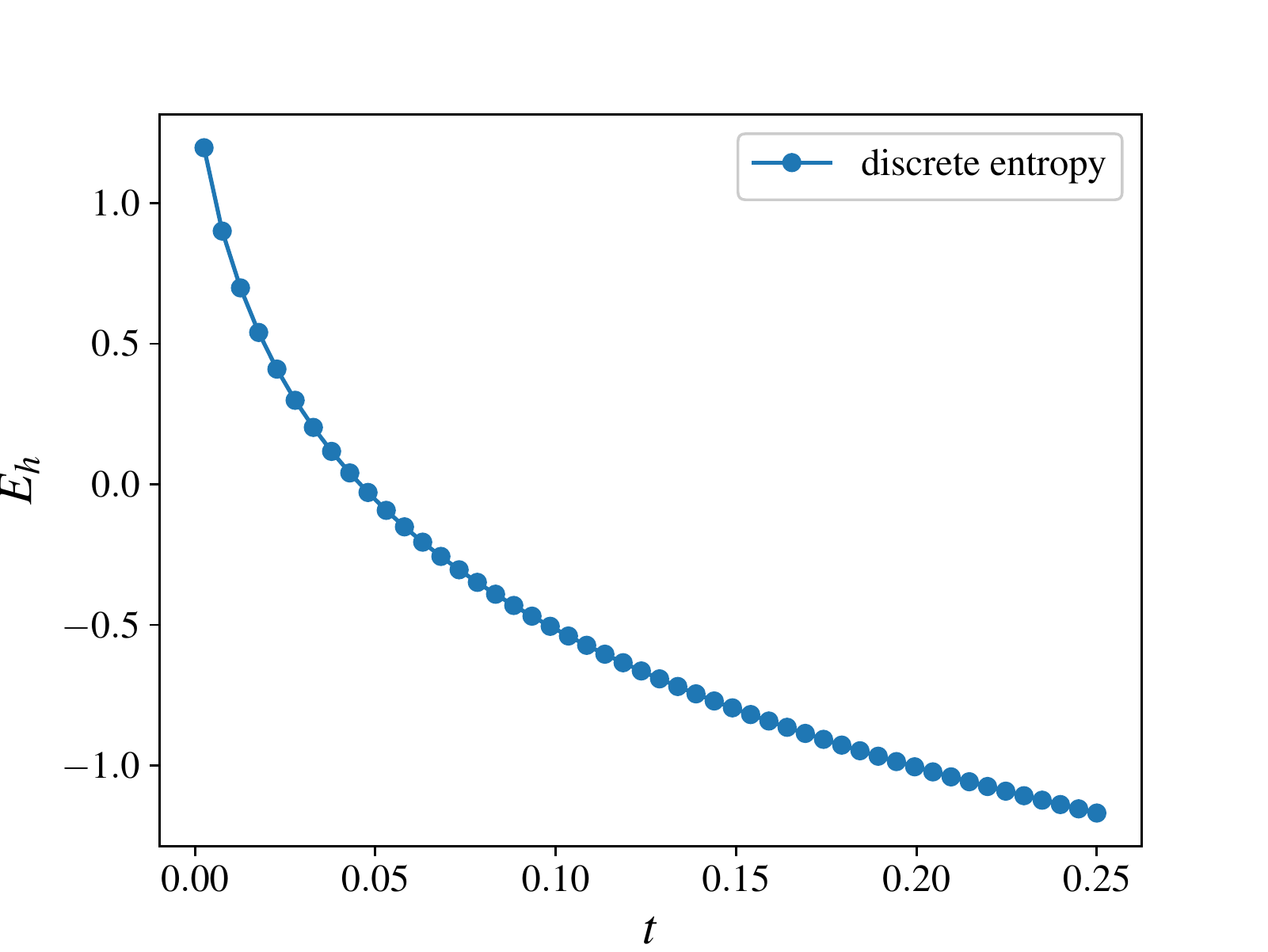}
      \caption{$E_h$, $k = 3$.}\label{subfig-surf2d-b3}
    \end{subfigure}
    \caption{Numerical solutions to the surfactant spreading problem
      \eqref{eq-2dnum-surf} at $t = 0.25$. 
      The solution is computed with piecewise cubic polynomials 
      $k = 3$, mesh size $h = 0.02$ and time step $\tau =
      0.003h^2$.}\label{fig-2dsurf}
\end{figure}
\end{Examp}

\begin{Examp}[Seawater intrusion]\label{examp:seawater}
  This test case is taken from \cite{oulhaj2017finite} on
  solving a cross-diffusion system modelling the seawater intrusion in an
  unconfined aquifer. 
  \begin{subequations}\label{eq-seawater}
 \begin{empheq}[left=\empheqlbrace]{align}
    \partial_t \rho_1 &= \nabla\cdot\left(\mu \rho_1\nabla(\rho_1+\rho_2+b)\right),\\
    \partial_t \rho_2 &= \nabla\cdot\left(
    \rho_2\nabla(\mu\rho_1+\rho_2+b)\right).
\end{empheq}
\end{subequations}
Here $z = b(x,y)$ gives the impermeable interface between the seawater and the
bedrock. The saltwater sits on the bedrock between $z = b(x,y)$ and $z = b(x,y) +
\rho_2(x,y,t)$. From $z = b(x,y) + \rho_2(x,y,t)$ to $z= b(x,y) + \rho_2(x,y,t) + 
\rho_1(x,y,t)$ is the freshwater, which is immersible with the saltwater. The
parameter $\mu \in (0,1)$ is the mass density ratio between the freshwater and
the saltwater. \eqref{eq-seawater} is associated with the energy functional 
\[E = \iint_\Omega \frac{\mu}{2}(\rho_1 + \rho_2 + b)^2 + \frac{1-\mu}{2}(\rho_2 +
b)^2 dxdy.\]
It can be show that there exists a non-negative solution to \eqref{eq-seawater}
satisfying the energy decay property
\[\frac{d}{dt}E = - \iint_\Omega \mu^2 \rho_1|\nabla(\rho_1+\rho_2+b)|^2 +
\rho_2|\nabla(\mu \rho_1 + \rho_2 + b)|^2 dxdy.\]
We assume the zero-flux boundary condition and use a $20\times 20$ square mesh
on $[0,1]\times [0,1]$. We compute to $t =
12$ with $\tau = 0.002(h^x)^2$. The parameter is set as $\mu = 0.9$. The initial
condition $\brho_0 = (\rho_1^0, \rho_2^0)^T$, with
\[\rho_1^0(x,y) = \left\{\begin{matrix} 0.5 &\text{ if } x\leq0.25\\0 &\text{
otherwise}\end{matrix}\right.\]
and \[\rho_2^0(x,y) =
\left\{\begin{matrix} b(0.5,0)-b(x,y) - (x-0.5) &\text{ if } x\leq 0.5\\
0&\text{ otherwise}\end{matrix}\right.,\] is used in our numerical simulation.
Here the seabed function is 
\[b(x,y) =
\max(0,0.5(1-16(x-0.5)^2)(\cos(\pi y) + 2)).\]
Numerical solutions at different time are depicted in Figure \ref{fig-seawater}.
One can see that solutions converge to a steady state as $t$ becomes large. 
  \begin{figure}
    \centering
    \begin{subfigure}[h!]{0.48\textwidth}
      \centering
      \includegraphics[width=\textwidth]{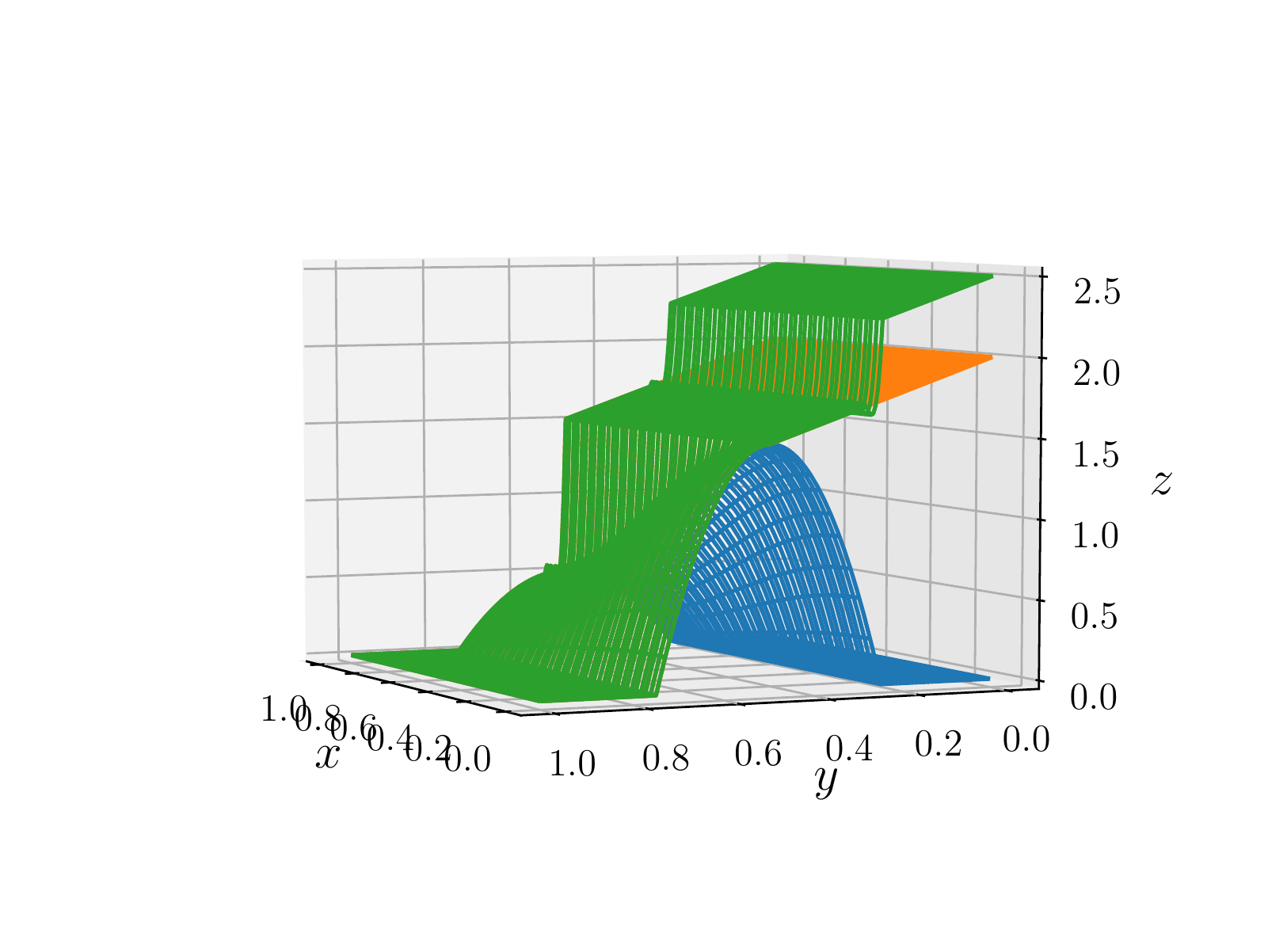}
      \caption{$t = 0$.}\label{subfig-seawater2d-a0}
    \end{subfigure}
    ~
    \begin{subfigure}[h!]{0.48\textwidth}
      \centering
      \includegraphics[width=\textwidth]{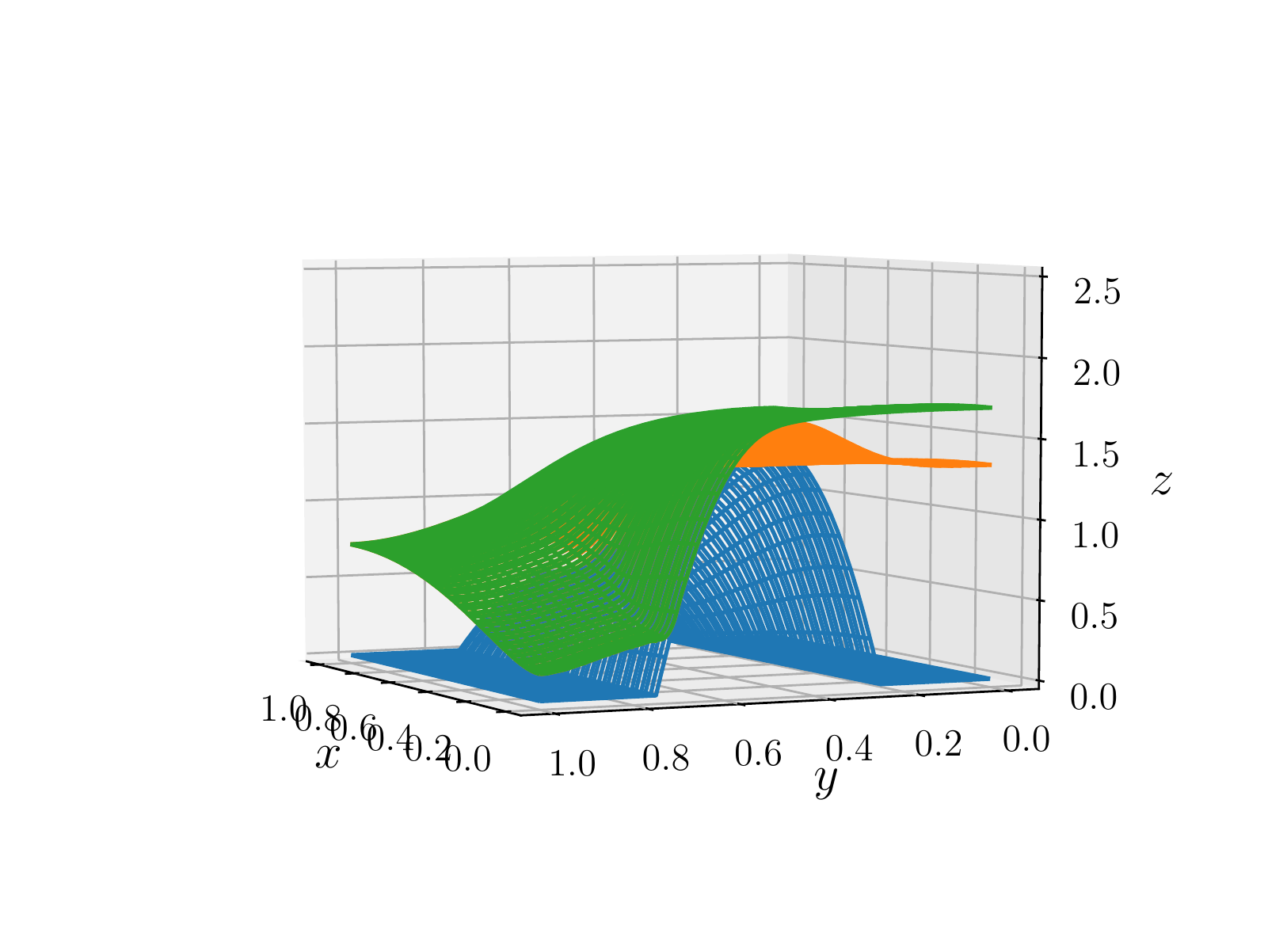}
      \caption{$t = 0.2$.}\label{subfig-seawater2d-a1}
    \end{subfigure}
    ~
    \begin{subfigure}[h!]{0.48\textwidth}
      \centering
      \includegraphics[width=\textwidth]{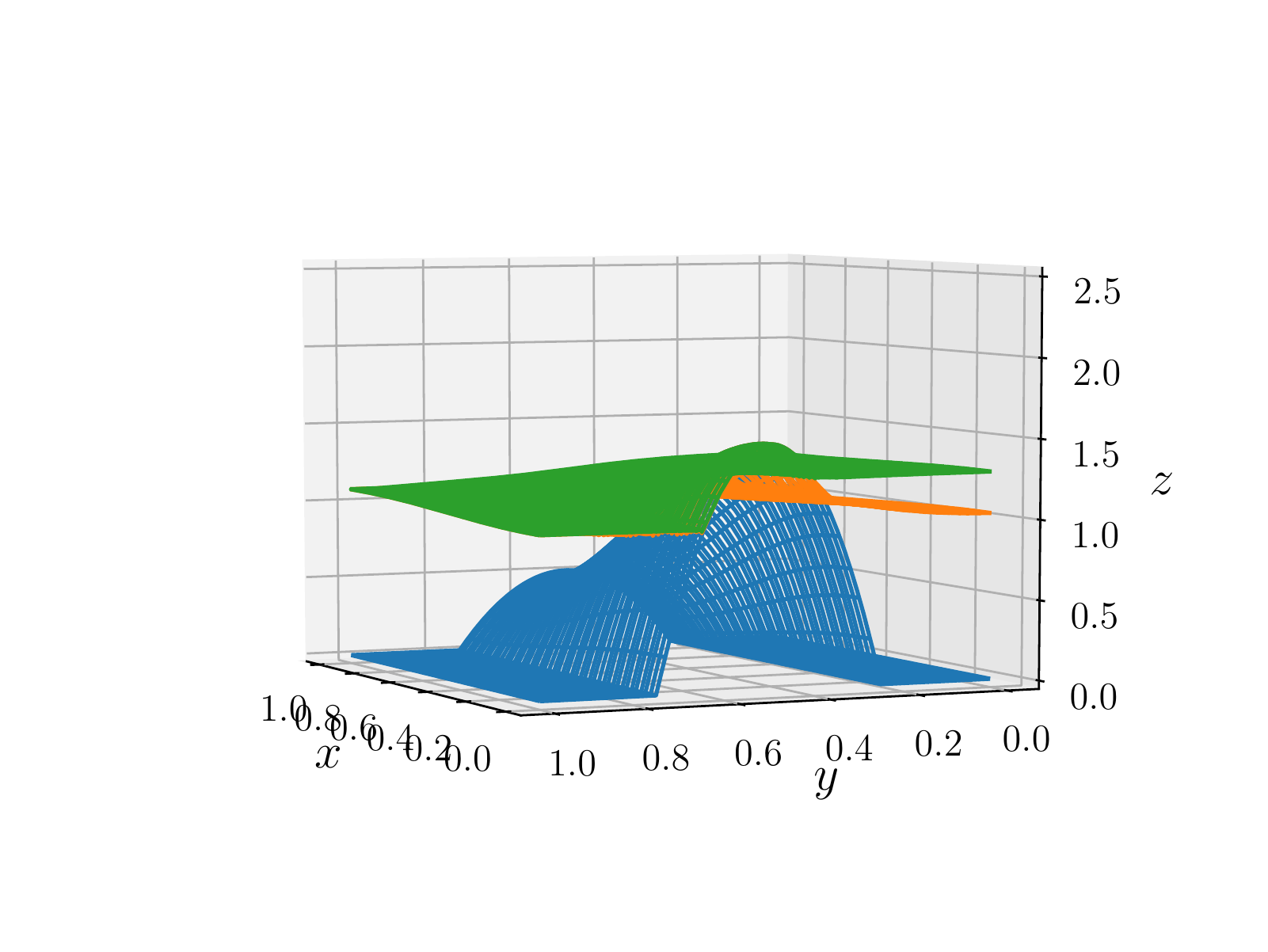}
      \caption{$t = 0.79$.}\label{subfig-seawater2d-a2}
    \end{subfigure}
    ~
    \begin{subfigure}[h!]{0.48\textwidth}
      \centering
      \includegraphics[width=\textwidth]{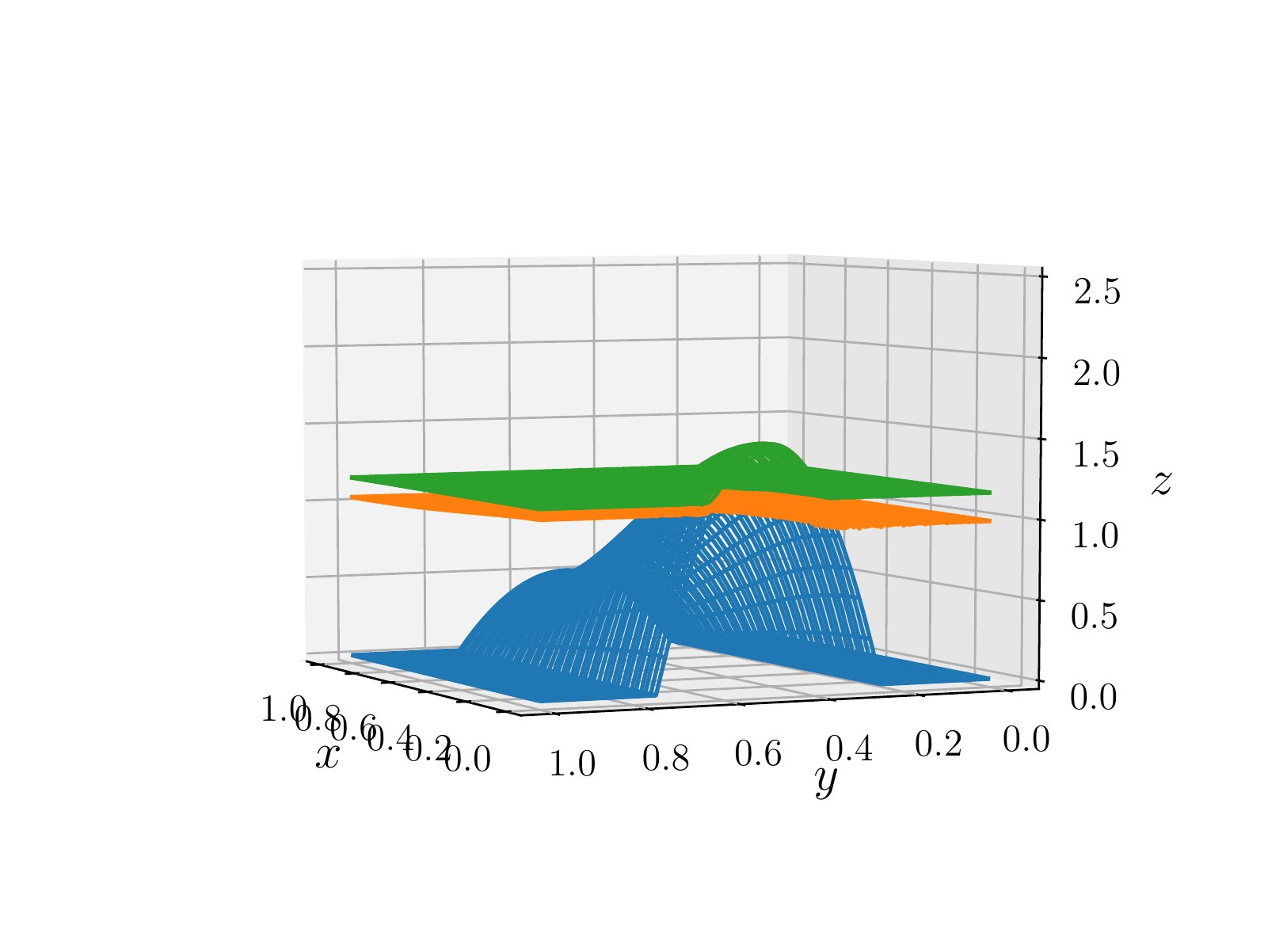}
      \caption{$t = 12$.}\label{subfig-seawater2d-a3}
    \end{subfigure}
    ~
    \begin{subfigure}[h!]{0.48\textwidth}
      \centering
      \includegraphics[width=\textwidth]{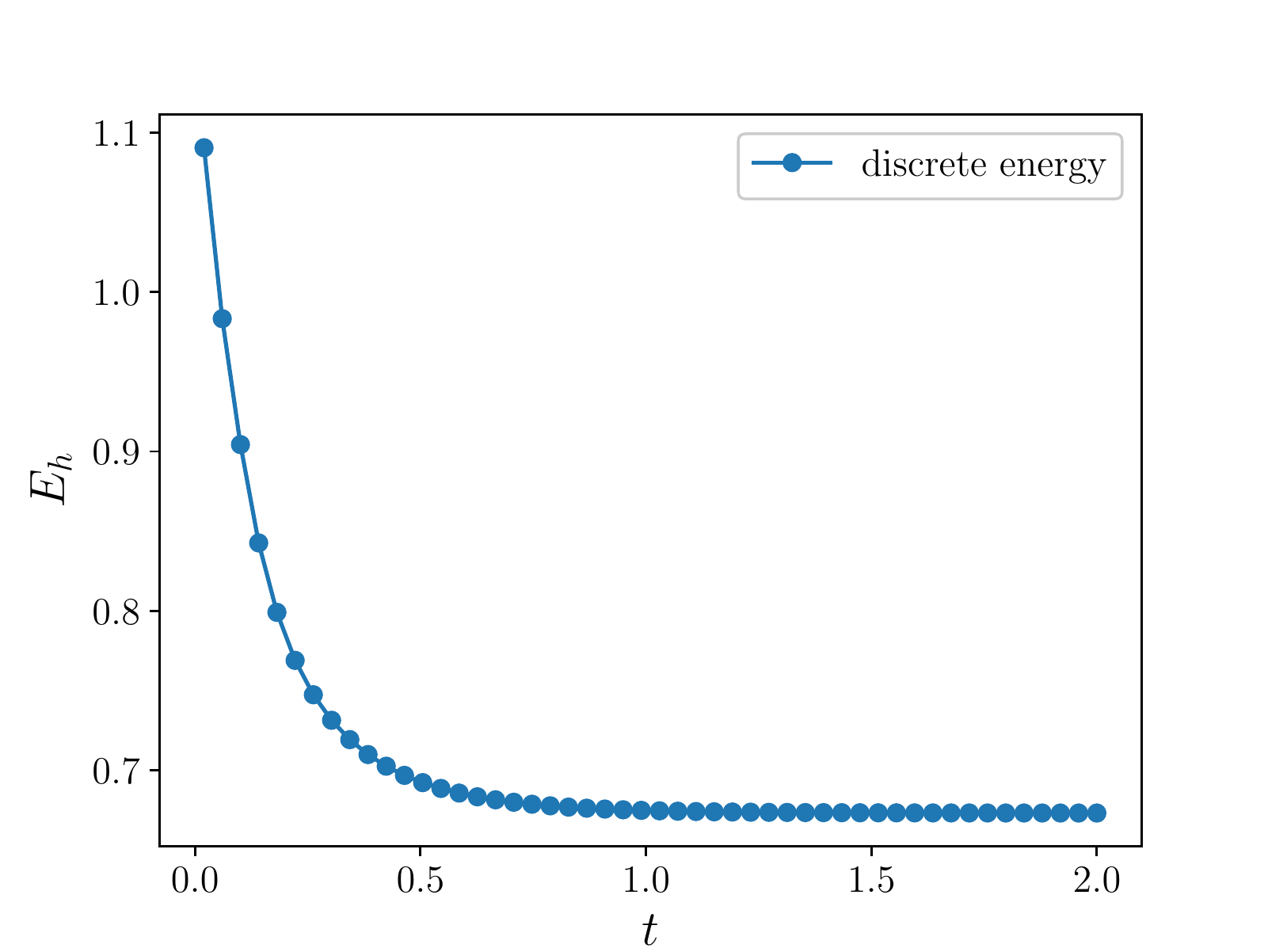}
      \caption{$E_h$.}\label{subfig-seawater2d-ent}
    \end{subfigure}
    \caption{Numerical solutions to the seawater intrusion problem in Example 
      \ref{examp:seawater} at $t = 0, 0.2, 0.79, 12$. Solutions are obtained with
      piecewise cubic polynomials on a uniform square mesh on
      $[0,1]\times [0,1]$, with $h^x = h^y
      =0.05$. The time step is set as $\tau = 0.002(h^x)^2$ in the 
    simulation. $b$, $b+\rho_2$ and $b+\rho_1+\rho_2$ are depicted in blue,
  orange and green respectively.}\label{fig-seawater}
\end{figure}
\end{Examp}

\section{Concluding remarks}\label{sec-conclusions}
\setcounter{equation}{0}
\setcounter{figure}{0}
\setcounter{table}{0}
In this paper, we extend the DG method in \cite{sun2018discontinuous} to solve cross-diffusion systems with 
a gradient flow structure. 
The difficulty is that the non-negativity of solutions is usually an
essential part for the entropy stability. One needs to
design numerical schemes to preserve the entropy structure, as well as 
to ensure the
positivity of solutions. We adopt the Gauss-Lobatto quadrature rule in the DG
method, so that the resulting semi-discrete scheme are subject to an entropy
inequality consistent with that of the continuum system. Furthermore, for a
class of problems, with a suitable choice of numerical fluxes, the numerical
method is compatible with the positivity-preserving procedure established in
\cite{zhang2010maximum} and \cite{zhang2017positivity}. The extension to two-dimensional 
problems on Cartesian meshes is also discussed. In our numerical tests, we
observe that the methods achieve high-order accuracy. Though the convergence
rate is reduced with odd-order piecewise polynomials, the optimal rate can be
retrieved by imposing larger Lax-Friedrichs constant in the numerical fluxes. 
Numerical simulations to problems with positivity-preserving issues have also
been performed.

\appendix
\section{Fully discrete entropy inequality}\label{sec-prooffully}

\begin{THM}
  Consider the Euler forward time discretization of the one-dimensional scheme
  \eqref{eq-1dscheme} with central and Lax-Friedrichs fluxes \eqref{eq-1dflux}
  on a uniform mesh.
  Suppose $\bz\cdot F\bz \geq \beta_F |\bz|^2$, $\bz\cdot D\bxi \bz \geq
  \beta_{D\bxi} |\bz|^2$,
  $|F|\leq \beta^F$ and $|D\bxi|\leq \beta^{D\bxi}$ uniformly in $\brho$ for
  some fixed constants 
  $\beta_F$, $\beta_{D\bxi}$, $\beta^F$ and $\beta^{D\bxi}$. 
  If $\tau \leq \min_{\substack{i}} (\frac{\beta_Fh^2}{4c_0^2c_{inv}^2
   (\beta^F)^2\beta^{D\bxi}},
   \frac{\beta_{D\bxi}h}{2c_0c_{inv}\beta^{D\bxi}\alpha_{i+\hf}})$, then 
 \[\frac{E_h^{n+1}-E_h^n}{\tau} \leq -\frac{\beta_F}{2}
   \nintO |\bu_h|^2 dx -
 \frac{\beta_{D\bxi}}{4} \sum_{i=1}^N\alpha_{i+\hf}
 |[\brho_h]_{i+\hf}|^2.\]
 Here $|F|$ and $|D\bxi|$ are $l^2$ matrix norms of $F$ and $D\bxi$
 respectively, $c_0$ is a constant for norm equivalence and $c_{inv}$ is the constant in the inverse estimate.
\end{THM}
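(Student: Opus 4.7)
The plan is to Taylor-expand $E_h^{n+1} - E_h^n$ in time, reduce the linear part to the semi-discrete identity from Theorem \ref{thm-1dent}, and absorb the quadratic remainder using the two stated CFL-type conditions. Set $\delta_h := \brho_h^{n+1} - \brho_h^n \in \bV_h$; the Euler forward update gives $\delta_h = \tau \bF(\brho_h^n)$ pointwise, where $\bF$ denotes the spatial DG operator on the right-hand side of \eqref{eq-1dscheme-1}. Since $D^2 e = D\bxi$ with $|D\bxi|\leq \beta^{D\bxi}$, a pointwise Taylor expansion at each Gauss--Lobatto node, weighted by the quadrature, gives
\begin{equation*}
E_h^{n+1} - E_h^n \leq \nintO \bxi_h^n \cdot \delta_h\, dx + \tfrac{\beta^{D\bxi}}{2}\nintO |\delta_h|^2\, dx.
\end{equation*}

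For the linear term, plugging the Gauss--Lobatto interpolant of $\bxi_h^n$ into \eqref{eq-1dscheme-1} as $\bvphi_h$ yields the same quadrature values as the formal choice $\bvphi_h = \bxi_h^n$, so the computation in the proof of Theorem \ref{thm-1dent} carries over verbatim at time level $n$. Combined with the uniform coercivity $\bz\cdot F\bz \geq \beta_F|\bz|^2$ on the volume integral and with $\bz\cdot D^2 e\, \bz \geq \beta_{D\bxi}|\bz|^2$ applied to the jump segment (replacing the $\bzeta_{i+\hf}$ mean-value step at the end of the proof of Theorem \ref{thm-1dent}), this gives
\begin{equation*}
\tfrac{1}{\tau}\nintO \bxi_h^n \cdot \delta_h\, dx \leq -\beta_F \nintO |\bu_h|^2\, dx - \tfrac{\beta_{D\bxi}}{2}\sum_{i=1}^N \alpha_{i+\hf}\bigl|[\brho_h]_{i+\hf}\bigr|^2.
\end{equation*}

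To bound the remainder, I would take $\bvphi_h = \delta_h \in \bV_h$ in \eqref{eq-1dscheme-1} with $\partial_t\brho_h$ replaced by $\delta_h/\tau$; summing over $i$ and using periodicity yields
\begin{equation*}
\tfrac{1}{\tau}\nintO |\delta_h|^2\,dx = -\nintO F_h\bu_h \cdot \partial_x \delta_h\,dx - \sum_{i=1}^N \wFbu_{i+\hf}\cdot [\delta_h]_{i+\hf}.
\end{equation*}
The volume integral is bounded by Cauchy--Schwarz, $|F_h|\leq \beta^F$, the norm equivalence with constant $c_0$ between the quadrature and $L^2$ norms on $\bV_h$, and the inverse inequality $\|\partial_x \delta_h\|_{L^2}\leq (c_{inv}/h)\|\delta_h\|_{L^2}$. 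Splitting $\wFbu = \{F_h\bu_h\}+\tfrac{\alpha}{2}[\brho_h]$, the $\{F_h\bu_h\}$ piece is estimated analogously using an inverse/trace bound on $|[\delta_h]_{i+\hf}|$, and the Lax--Friedrichs piece is handled by the symmetric Young splitting $\alpha|[\brho_h]|\,|[\delta_h]|\leq \tfrac{1}{2}\alpha|[\brho_h]|^2+\tfrac{1}{2}\alpha|[\delta_h]|^2$, which keeps $\alpha_{i+\hf}$ to the first power, cell-localized, on the $|[\brho_h]|^2$ contribution. Using $\nintO|\delta_h|^2\,dx = \tau^2 \nintO |\bF(\brho_h^n)|^2 dx$ and a Young absorption of the resulting $\nintO|\delta_h|^2\,dx/\tau$ back into the left, the remainder $\tfrac{\beta^{D\bxi}}{2\tau}\nintO|\delta_h|^2\,dx$ is bounded by a multiple of $\nintO|\bu_h|^2\,dx$ with coefficient of order $c_0^2 c_{inv}^2(\beta^F)^2\beta^{D\bxi}\tau/h^2$, plus a multiple of $\sum_i \alpha_{i+\hf}|[\brho_h]_{i+\hf}|^2$ with per-cell coefficient of order $c_0 c_{inv}\beta^{D\bxi}\tau\alpha_{i+\hf}/h$. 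The two stated CFL conditions are exactly what makes each contribution no larger than half of the corresponding dissipation term from the linear bound, completing the proof.

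The main obstacle will be the careful bookkeeping at the boundary: the Young splitting must be arranged so that the Lax--Friedrichs $\alpha_{i+\hf}$ appears linearly and cell-localized rather than squared or as a global $\max_i \alpha_{i+\hf}$, so that the second CFL restriction takes the precise form $\tau\leq \beta_{D\bxi}h/(2c_0 c_{inv}\beta^{D\bxi}\alpha_{i+\hf})$ stated. Keeping $c_0 c_{inv}$ only to the first power in the volume estimate, by applying the inverse inequality in $L^2$ and then invoking the norm equivalence once, is the other delicate point and is precisely what recovers the exponent $c_0^2 c_{inv}^2$ in the first CFL restriction.
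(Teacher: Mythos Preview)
Your overall architecture matches the paper's: Taylor-expand $E_h^{n+1}-E_h^n$, identify the linear part with the semi-discrete dissipation identity from Theorem~\ref{thm-1dent}, and control the quadratic remainder by testing the scheme against $\delta_h$ and invoking norm equivalence and inverse/trace inequalities. The linear bound and the treatment of the volume term are correct.

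The gap is in the Lax--Friedrichs piece. Your symmetric split
\[
\alpha|[\brho_h]|\,|[\delta_h]| \leq \tfrac12\alpha|[\brho_h]|^2 + \tfrac12\alpha|[\delta_h]|^2
\]
leaves a coefficient of order $\alpha_{i+\hf}$ in front of $|[\brho_h]|^2$ with \emph{no} small parameter ($\tau$, $h^{-1}$) attached. After absorbing $\alpha|[\delta_h]|^2$ back into the left and multiplying the resulting bound on $\tfrac{1}{\tau}\nintO|\delta_h|^2\,dx$ by $\beta^{D\bxi}/2$, the jump contribution to $(E_h^{n+1}-E_h^n)/\tau$ is of order $\beta^{D\bxi}\alpha_{i+\hf}|[\brho_h]|^2$. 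Absorbing this into the dissipation $\tfrac{\beta_{D\bxi}}{2}\alpha_{i+\hf}|[\brho_h]|^2$ would require $\beta^{D\bxi}\lesssim\beta_{D\bxi}$, which fails generically since $\beta_{D\bxi}\leq\beta^{D\bxi}$. No CFL restriction on $\tau$ rescues this, and indeed your own next sentence --- claiming a per-cell coefficient of order $c_0c_{inv}\beta^{D\bxi}\tau\alpha_{i+\hf}/h$ in front of $\alpha_{i+\hf}|[\brho_h]|^2$ --- is inconsistent with the split you just described.

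The paper fixes this by applying a single \emph{weighted} Young inequality to the whole flux,
\[
|\wFbu_{i+\hf}\cdot[\bseta]_{i+\hf}| \leq c_2\,|\wFbu_{i+\hf}|^2 + \tfrac{1}{4c_2}\,|[\bseta]_{i+\hf}|^2,\qquad c_2 = \frac{c_0c_{inv}}{h},
\]
(and similarly $c_1 = c_0^2c_{inv}/h^2$ for the volume term), so that the $|[\bseta]|^2$ and $|\partial_x\bseta|^2$ pieces are exactly absorbed into the left-hand side. Expanding $|\wFbu|^2 \leq 2|\{F_h\bu_h\}|^2 + \tfrac{\alpha^2}{2}|[\brho_h]|^2$ then yields $\alpha_{i+\hf}^{\,2}$, not $\alpha_{i+\hf}$, in the intermediate bound on $\nintO|\bseta|^2\,dx$; one factor of $\alpha_{i+\hf}$ pairs with the $\alpha_{i+\hf}$ already present in the dissipation, and the surviving coefficient $c_0c_{inv}\beta^{D\bxi}\tau\alpha_{i+\hf}/(2h)$ carries the needed factor of $\tau$ and delivers the second CFL bound exactly. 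So your concern that $\alpha$ must be kept linear at the intermediate stage is misplaced. Similarly, the factor $c_0^2c_{inv}^2$ in the first CFL bound does not come from the volume estimate alone (which contributes $c_0^2c_{inv}$) but from combining it with the trace estimate $\sum_i|\{F_h\bu_h\}_{i+\hf}|^2 \leq \tfrac{c_0c_{inv}}{2h}\nintO|F_h\bu_h|^2\,dx$ applied to the boundary piece.
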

\begin{proof}
  We omit all subscripts $h$ and superscripts $n$ in this proof.
  \begin{align*}
    {E^{n+1}-E} &= \sum_{i=1}^N\ninti
    \left(e(\brho^{n+1})-e(\brho)\right) dx \\
    &= \sum_{i=1}^N \ninti
    \left({e(\brho^{n+1})-e(\brho)-(\brho^{n+1}-\brho)\cdot
    \bxi}\right) dx+ \sum_{i=1}^N \ninti(\brho^{n+1}-\brho)\cdot\bxi dx\\
    &= \frac{1}{2}\sum_{i=1}^N \ninti (\brho^{n+1}-\brho)\cdot D\bxi(\bzeta)
    (\brho^{n+1}-\brho) dx + \sum_{i=1}^N
    \ninti(\brho^{n+1}-\brho)\cdot\bxi dx\\
    &:= A + B.
  \end{align*}
  Here we have applied the mean value theorem and $\bzeta$ is on the line
  segment between $\brho^{n+1}$ and $\brho$. 
  Let $\bseta = \frac{\brho^{n+1}-\brho}{\tau}$. Then
  \[A \leq \frac{1}{2}\max_{\bzeta }|D\bxi(\bzeta)|\nintO
    |\brho^{n+1}-\brho|^2 dx \leq \frac{\tau^2 \beta^{D\bxi}}{2}\nintO|\bseta|^2 dx \, .
  \]
Using the scheme \eqref{eq-1dscheme}, we obtain 
  \begin{align*}
    B &= \tau \left( -\sum_{i=1}^N \ninti \bu \cdot F \bu dx
      -\frac{1}{2}\sum_{i=1}^N\alpha_{i+\hf}
    [\bxi]_{i+\hf}\cdot [\brho]_{i+\hf}\right)\\
  &= \tau \left(-\nintO \bu\cdot F \bu dx -
   \frac{1}{2} \sum_{i=1}^N\alpha_{i+\hf}[\brho]_{i+\hf}\cdot
    D\bxi(\bzeta_{i+\hf})[\brho]_{i+\hf}\right)\\
  &\leq -\tau \beta_F \nintO |\bu|^2dx -
  \tau\frac{\beta_{D\bxi}}{2}\sum_{i=1}^N\alpha_{i+\hf}|[\brho]|^2_{i+\hf}\,,
  \end{align*}
 where $\bzeta_{i+\hf}$ lies between $\brho_{i+\hf}^-$ and $\brho_{i+\hf}^+$. 
Our main task is to estimate $\nintO |\bseta|^2 dx$. 
  \begin{align*}
    \nintO |\bseta|^2 dx =& \sum_{i=1}^N \ninti
    \frac{\brho^{n+1}-\brho^n}{\tau} \cdot \bseta dx\\
    =& \sum_{i=1}^N \left(-\ninti F \bu \cdot \partial_x\bseta dx + (\wFbu\cdot\bseta)_{i+\hf}^- -
    (\wFbu \cdot\bseta)_{i-\hf}^+\right)\\
    =& -\sum_{i=1}^N \ninti F\bu \cdot \partial_x\bseta dx - \sum_{i=1}^N
    \wFbu_{i+\hf} \cdot
    [\bseta]_{i+\hf}\\
    \leq& \, c_1\sum_{i=1}^N\ninti |F\bu|^2 dx +
    \frac{1}{4c_1}\sum_{i=1}^N\ninti
    |\partial_x\bseta|^2 dx\\
    &+ 2c_2\left(\sum_{i=1}^N |\{F\bu\}|^2_{i+\hf} + \sum_{i=1}^N
    \frac{\alpha_i^2}{4}|[\brho]|_{i+\hf}^2\right) + \frac{1}{4c_2}\sum_{i=1}^N
    |[\bseta]|^2_{i+\hf}.
  \end{align*}
  Here $c_1$ and $c_2$ are constants to be determined. Note that $\ninti
  |\cdot|^2 dx$ defines a norm on $P^k(I_i)$. Using norm equivalence and inverse
  estimates, we have 
\[c_0^{-1} \inti |p|^2 dx \leq \ninti |p|^2 dx \leq c_0\inti |p|^2 dx,\qquad  \forall
p\in {P}^k(I_i).\]
\[\sum_{i=1}^N \ninti |\partial_x \bseta|^2 dx \leq c_0\sum_{i=1}^N \inti
  |\partial_x \bseta|^2
  dx \leq \frac{c_0c_{inv}}{h^2} \sum_{i=1}^N \inti |\bseta|^2 dx \leq
\frac{c_0^2c_{inv}}{h^2} \sum_{i=1}^N \ninti |\bseta|^2 dx,\]
and 
\[\sum_{i=1}^N [\bseta]_{i+\hf}^2 \leq \frac{c_{inv}}{h}\sum_{i=1}^N\inti |\bseta|^2 dx \leq
\frac{c_0c_{inv}}{h} \sum_{i=1}^N \ninti |\bseta|^2 dx,\]
where $c_{inv}$ is a constant not less than $1$ in the inverse estimates.
Taking $c_1 = \frac{c_0^2c_{inv}}{h^2}$ and $c_2 = \frac{c_0c_{inv}}{h}$, then
we have 
   \[\nintO|\bseta|^2dx \leq \frac{2c_0^2c_{inv}}{h^2}
     \nintO|F\bu|^2 dx
     +
   \frac{4c_0c_{inv}}{h}\left(\sum_{i=1}^N|\{F\bu\}|^2_{i+\hf}+\sum_{i=1}^N\frac{\alpha^2_{i+\hf}}{4}|[\brho]|^2_{i+\hf}\right).\]
   Since
   \begin{equation*}
     \begin{aligned}
       \sum_{i=1}^N |\{F\bu\}_{i+\hf}|^2 \leq \frac{1}{2}\sum_{i=1}^N\left(|(F\bu)_{i-\hf}^+|^2 +
     |(F\bu)_{i+\hf}^-|^2\right)&\leq \frac{c_{inv}}{2h}\sum_{i=1}^N \inti
     |\cI(F\bu)|^2
   dx\\
   &\leq \frac{c_0c_{inv}}{2h}\sum_{i=1}^N \ninti |F\bu|^2 dx,
 \end{aligned}
 \end{equation*}
 one can obtain
 \begin{align*}
   \nintO|\bseta|^2dx \leq& \frac{4c_0^2c_{inv}^2}{h^2}
   \nintO|F\bu|^2 dx
   +\frac{c_0c_{inv}}{h}\sum_{i=1}^N{\alpha^2_{i+\hf}}|[\brho]|^2_{i+\hf}\\
   \leq& \frac{4c_0^2c_{inv}^2(\beta^F)^2}{h^2}\nintO|\bu|^2 dx
   +\frac{c_0c_{inv}}{h}\sum_{i=1}^N{\alpha^2_{i+\hf}}|[\brho]|^2_{i+\hf}
 .
 \end{align*}
 Therefore, we conclude that
 \begin{equation*}
   \begin{aligned}
     \frac{E^{n+1}-E}{\tau} 
     \leq& -\left(\beta_F-2c_0^2c_{inv}^2 (\beta^F)^2\beta^{D\bxi}
   \frac{\tau}{h^2}\right)\nintO |\bu|^2 dx \\
   &-\frac{1}{2}\sum_{i=1}^N\left({\beta_{D\bxi}}-c_0c_{inv}\beta^{D\bxi}\alpha_{i+\hf}\frac{\tau}{h}\right)\alpha_{i+\hf}
 |[\brho]_{i+\hf}|^2.
 \end{aligned}
 \end{equation*}
 The proof is completed by substituting the time step restriction into the
 inequality. 
\end{proof}

%
%
%
%

\section*{Acknowledgments}
JAC was partially supported by the EPSRC grant number EP/P031587/1. 
JAC would like to thank the Department of Applied Mathematics at 
Brown University for their kind hospitality and for the support 
through the IBM Visiting Professorship scheme. 
CWS was partially supported by ARO grant W911NF-16-1-0103 and 
NSF grant DMS-1719410.


\begin{thebibliography}{10}

\bibitem{bassi1997high}
F.~Bassi and S.~Rebay.
\newblock A high-order accurate discontinuous finite element method for the
  numerical solution of the compressible {N}avier--{S}tokes equations.
\newblock {\em Journal of Computational Physics}, 131(2):267--279, 1997.

\bibitem{berendsen}
J.~Berendsen, M.~Burger, and J.-P.~Pietschmann. 
\newblock On a cross-diffusion model for multiple species with nonlocal interaction and size exclusion.
\newblock {\em Nonlinear Anal.}, 159:10--39, 2017. 

\bibitem{bessemoulin2012finite}
M.~Bessemoulin-Chatard and F.~Filbet.
\newblock A finite volume scheme for nonlinear degenerate parabolic equations.
\newblock {\em SIAM Journal on Scientific Computing}, 34(5):B559--B583, 2012.

\bibitem{burger2010mixed}
M.~Burger, J.~A. Carrillo, and M.-T. Wolfram.
\newblock A mixed finite element method for nonlinear diffusion equations.
\newblock {\em Kinetic \& Related Models}, 3(1):59--83, 2010.

\bibitem{bruna}
M.~Bruna, M.~Burger, H.~Ranetbauer, and M.-T.~Wolfram.
\newblock Cross-diffusion systems with excluded-volume effects and asymptotic gradient flow structures. 
\newblock {\em J. Nonlinear Sci.}, 27(2):687--719, 2017. 

\bibitem{carrillo2015finite}
J.~A. Carrillo, A.~Chertock, and Y.~Huang.
\newblock A finite-volume method for nonlinear nonlocal equations with a
  gradient flow structure.
\newblock {\em Communications in Computational Physics}, 17(1):233--258, 2015.

\bibitem{carrillo2017blob}
J.~A. Carrillo, K.~Craig, and F.~S. Patacchini.
\newblock A blob method for diffusion.
\newblock {\em arXiv preprint arXiv:1709.09195}, 2017.

\bibitem{carrillo2018convergence}
J.~A. Carrillo, F.~Filbet, and M.~Schmidtchen.
\newblock Convergence of a finite volume scheme for a system of interacting
  species with cross-diffusion.
\newblock {\em arXiv preprint arXiv:1804.04385}, 2018.

\bibitem{carrillo2017numerical}
J.~A. Carrillo, Y.~Huang, F.~Saverio~Patacchini, and G.~Wolansky.
\newblock Numerical study of a particle method for gradient flows.
\newblock {\em Kinetic \& Related Models}, 10(3), 2017.

\bibitem{CDM}
J.~A. Carrillo, B.~D\''uring, D.~Matthes, and D.~McCormick.
\newblock A Lagrangian scheme for the solution of nonlinear diffusion equations using moving simplex meshes.
\newblock {\em J. Sci. Comput.}, 75(3):1463--1499, 2018. 

\bibitem{CM}
J.~A. Carrillo, and J.~S. Moll.
\newblock Numerical simulation of diffusive and aggregation phenomena in nonlinear continuity equations by evolving diffeomorphisms.
\newblock {\em SIAM J. Sci. Comput.}, 31(6):4305--4329, 2009/10. 

\bibitem{CRW}
J.~A. Carrillo, H.~Ranetbauer, and M.-T.~Wolfram.
\newblock Numerical simulation of nonlinear continuity equations by evolving diffeomorphisms.
\newblock {\em J. Comput. Phys.}, 327:186--202, 2016. 

\bibitem{chen2017entropy}
T.~Chen and C.-W. Shu.
\newblock Entropy stable high order discontinuous {G}alerkin methods with
  suitable quadrature rules for hyperbolic conservation laws.
\newblock {\em Journal of Computational Physics}, 345:427--461, 2017.

\bibitem{cheng2008discontinuous}
Y.~Cheng and C.-W. Shu.
\newblock A discontinuous {G}alerkin finite element method for time dependent
  partial differential equations with higher order derivatives.
\newblock {\em Mathematics of Computation}, 77(262):699--730, 2008.

\bibitem{cockburn1990runge4}
B.~Cockburn, S.~Hou, and C.-W. Shu.
\newblock The {R}unge--{K}utta local projection discontinuous {G}alerkin finite
  element method for conservation laws {IV}: The multidimensional case.
\newblock {\em Mathematics of Computation}, 54(190):545--581, 1990.

\bibitem{cockburn1989tvb3}
B.~Cockburn, S.-Y. Lin, and C.-W. Shu.
\newblock {TVB} {R}unge--{K}utta local projection discontinuous {G}alerkin
  finite element method for conservation laws {III}: one-dimensional systems.
\newblock {\em Journal of Computational Physics}, 84(1):90--113, 1989.

\bibitem{cockburn1989tvb2}
B.~Cockburn and C.-W. Shu.
\newblock {TVB} {R}unge--{K}utta local projection discontinuous {G}alerkin
  finite element method for conservation laws {II}: General framework.
\newblock {\em Mathematics of Computation}, 52(186):411--435, 1989.

\bibitem{cockburn1991runge1}
B.~Cockburn and C.-W. Shu.
\newblock The {R}unge--{K}utta local projection
  ${P}^1$-discontinuous-{G}alerkin finite element method for scalar
  conservation laws.
\newblock {\em ESAIM: Mathematical Modelling and Numerical Analysis},
  25(3):337--361, 1991.

\bibitem{cockburn1998local}
B.~Cockburn and C.-W. Shu.
\newblock The local discontinuous {G}alerkin method for time-dependent
  convection-diffusion systems.
\newblock {\em SIAM Journal on Numerical Analysis}, 35(6):2440--2463, 1998.

\bibitem{cockburn1998runge5}
B.~Cockburn and C.-W. Shu.
\newblock The {R}unge--{K}utta discontinuous {G}alerkin method for conservation
  laws {V}: multidimensional systems.
\newblock {\em Journal of Computational Physics}, 141(2):199--224, 1998.

\bibitem{craig2016blob}
K.~Craig and A.~Bertozzi.
\newblock A blob method for the aggregation equation.
\newblock {\em Mathematics of computation}, 85(300):1681--1717, 2016.

\bibitem{escher2011global}
J.~Escher, M.~Hillairet, P.~Laurencot, and C.~Walker.
\newblock Global weak solutions for a degenerate parabolic system modeling the
  spreading of insoluble surfactant.
\newblock {\em Indiana University Mathematics Journal}, pages 1975--2019, 2011.

\bibitem{gottlieb2001strong}
S.~Gottlieb, C.-W. Shu, and E.~Tadmor.
\newblock Strong stability-preserving high-order time discretization methods.
\newblock {\em SIAM {R}eview}, 43(1):89--112, 2001.

\bibitem{HRSW}
S.~Hittmeir, H.~Ranetbauer, C.~Schmeiser, and M.-T.~Wolfram.
\newblock Derivation and analysis of continuum models for crossing pedestrian traffic.
\newblock {\em Math. Models Methods Appl. Sci.}, 27(7):1301--1325, 2017. 

\bibitem{jackson2002mechanical}
T.~L. Jackson and H.~M. Byrne.
\newblock A mechanical model of tumor encapsulation and transcapsular spread.
\newblock {\em Mathematical Biosciences}, 180(1-2):307--328, 2002.

\bibitem{jensen1992insoluble}
O.~Jensen and J.~Grotberg.
\newblock Insoluble surfactant spreading on a thin viscous film: shock
  evolution and film rupture.
\newblock {\em Journal of Fluid Mechanics}, 240:259--288, 1992.

\bibitem{JMO}
O.~Junge, D.~Matthes, and H.~Osberger.
\newblock A fully discrete variational scheme for solving nonlinear Fokker-Planck equations in multiple space dimensions.
\newblock {\em SIAM J. Numer. Anal.}, 55(1):419--443, 2017. 

\bibitem{jungel2015boundedness}
A.~J{\"u}ngel.
\newblock The boundedness-by-entropy method for cross-diffusion systems.
\newblock {\em Nonlinearity}, 28(6):1963, 2015.

\bibitem{jungel2012entropy}
A.~J{\"u}ngel and I.~V. Stelzer.
\newblock Entropy structure of a cross-diffusion tumor-growth model.
\newblock {\em Mathematical Models and Methods in Applied Sciences},
  22(07):1250009, 2012.

\bibitem{jungelzamponi}
A.~J{\"u}ngel and N.~Zamponi.
\newblock Qualitative behavior of solutions to cross-diffusion systems from population dynamics.
\newblock {\em J. Math. Anal. Appl.}, 440:794--809, 2016. 

\bibitem{jungelzamponi2}
A.~J{\"u}ngel and N.~Zamponi.
\newblock Analysis of degenerate cross-diffusion population models with volume filling. 
\newblock {\em Ann. Inst. H. Poincaré Anal. Non Lin\'eaire}, 34(1):1--29, 2017. 

\bibitem{keller1970initiation}
E.~F. Keller and L.~A. Segel.
\newblock Initiation of slime mold aggregation viewed as an instability.
\newblock {\em Journal of Theoretical Biology}, 26(3):399--415, 1970.

\bibitem{LH}
H.~Liu and H.~Yu.
\newblock The entropy satisfying discontinuous Galerkin method for Fokker-Planck equations.
\newblock {\em J. Sci. Comput.}, 62(3):803--830, 2015. 

\bibitem{liu2016entropy}
H.~Liu and Z.~Wang.
\newblock An entropy satisfying discontinuous {G}alerkin method for nonlinear
  {F}okker--{P}lanck equations.
\newblock {\em J. Sci. Comput.}, 68(3):1217--1240, 2016.

\bibitem{liu2017free}
H.~Liu and Z.~Wang.
\newblock A free energy satisfying discontinuous {G}alerkin method for
  one-dimensional {P}oisson--{N}ernst--{P}lanck systems.
\newblock {\em Journal of Computational Physics}, 328:413--437, 2017.

\bibitem{liu2009direct}
H.~Liu and J.~Yan.
\newblock The direct discontinuous {G}alerkin ({DDG}) methods for diffusion
  problems.
\newblock {\em SIAM Journal on Numerical Analysis}, 47(1):675--698, 2009.

\bibitem{oulhaj2017finite}
A.~A.~H. Oulhaj.
\newblock A finite volume scheme for a seawater intrusion model with
  cross-diffusion.
\newblock In C.~Canc{\`e}s and P.~Omnes, editors, {\em Finite Volumes for
  Complex Applications {VIII} - Methods and Theoretical Aspects}, pages
  421--429, Cham, 2017. Springer International Publishing.

\bibitem{reed1973triangular}
W.~H. Reed and T.~Hill.
\newblock Triangular mesh methods for the neutron transport equation.
\newblock Technical report, Los Alamos Scientific Lab., N. Mex.(USA), 1973.

\bibitem{shigesada1979spatial}
N.~Shigesada, K.~Kawasaki, and E.~Teramoto.
\newblock Spatial segregation of interacting species.
\newblock {\em Journal of Theoretical Biology}, 79(1):83--99, 1979.

\bibitem{srinivasanapositivity}
S.~Srinivasana, J.~Poggiea, and X.~Zhang.
\newblock A positivity-preserving high order discontinuous {G}alerkin scheme
  for convection-diffusion equations.
\newblock {\em Journal of Computational Physics}, to appear.

\bibitem{sun2018discontinuous}
Z.~Sun, J.~A. Carrillo, and C.-W. Shu.
\newblock A discontinuous {G}alerkin method for nonlinear parabolic equations
  and gradient flow problems with interaction potentials.
\newblock {\em Journal of Computational Physics}, 352:76--104, 2018.

\bibitem{zhang2017positivity}
X.~Zhang.
\newblock On positivity-preserving high order discontinuous {G}alerkin schemes
  for compressible {N}avier--{S}tokes equations.
\newblock {\em Journal of Computational Physics}, 328:301--343, 2017.

\bibitem{zhang2010maximum}
X.~Zhang and C.-W. Shu.
\newblock On maximum-principle-satisfying high order schemes for scalar
  conservation laws.
\newblock {\em Journal of Computational Physics}, 229(9):3091--3120, 2010.

\end{thebibliography}
\bibliographystyle{abbrv}

\end{document}